\newcommand{\mylabel}[2]{#2\def\@currentlabel{#2}\label{#1}}
\newcommand{\epsi}{\varepsilon}
\newcommand{\E}{{\mathrm{e}}}
\newcommand{\I}{\mathrm{i}}
 \newcommand{\R}{ \mathbb{R} }
  \newcommand{\Sph}{ \mathbb{S} }
\newcommand{\C}{ \mathbb{C} }
\newcommand{\N}{ \mathbb{N} }
\newcommand{\Z}{ \mathbb{Z} }
\newcommand{\D}{\mathrm{d}}
\newcommand{\T}{{\mathbb{T}}}
          \newcommand{\K}{{\boldsymbol{k}}}
                 \newcommand{\cC}{{\boldsymbol{c}}}
                    \newcommand{\omf}{{\boldsymbol{\omega}}}
\begin{document}
	\allowdisplaybreaks
	\numberwithin{equation}{section}
	
\theoremstyle{plain}
\newtheorem{prop}{Proposition}[section]
\newtheorem{thm}[prop]{Theorem}
\newtheorem{lem}[prop]{Lemma}
\newtheorem{cor}[prop]{Corollary}
\newtheorem{theorem}{Theorem}
\renewcommand*{\thetheorem}{\Roman{theorem}}
\newtheorem*{main}{Main Results}

\theoremstyle{definition}
\newtheorem{defi}[prop]{Definition}
\newtheorem{rmk}[prop]{Remark}
	
\setcounter{tocdepth}{1}

\title{Deformational rigidity of integrable metrics on the torus}
\author{Joscha Henheik\footnote{IST Austria, Am Campus 1, A-3400 Klosterneuburg\newline Mail: \href{mailto:joscha.henheik@ist.ac.at}{joscha.henheik@ist.ac.at} }
}
\maketitle

\begin{abstract}
	It is conjectured that the only integrable metrics on the two-dimensional torus are Liouville metrics. In this paper, we study a deformative version of this conjecture: We consider integrable deformations of a non-flat Liouville metric in a conformal class and show that for a fairly large class of such deformations the deformed metric is again Liouville. Since our method of proof immediately carries over to higher dimensional tori, we obtain analogous statements in this more general case. In order to put our results in perspective, we review existing results about integrable metrics on the torus.
	\\[2mm]
	\textbf{Keywords:} Liouville metrics, deformational rigidity, geodesic flow, weak KAM theory. \\
	\textbf{AMS Classes (2020):} 37J35, 37J39, 53D25, 37C83, 37J40, 70H06. 
\end{abstract}

\section{Introduction} \label{sec:Introduction}
Let $\T^2 = \R^2 / \Z^2$ be the two-dimensional torus being equipped with a $C^2$-smooth global Liouville metric $g$, i.e.~having line element
\begin{equation} \label{eq:lioumet}
\D s^2 = \big(f_1(x^1) + f_2(x^2)\big) \, \big((\D x^1)^2 + (\D x^2)^2\big)\,,
\end{equation}  
where $(x^1,x^2) \in \T^2$ are the standard periodic coordinates and $f_1, f_2 \in C^2(\T)$ are positive Morse functions\footnote{Recall that Morse functions on a manifold $M$ are characterized by having no degenerate critical points. They form a dense and open set in $C^2(M)$ and are thus `generic'.} or positive constants and thus `non-degenerate'. The corresponding geodesic flow (see Section \ref{subsec:geoflow}) is well known to be integrable and a longstanding folklore conjecture says that \emph{Liouville metrics are the only integrable metrics on $\T^2$}. {We emphasize that, in this context, integrability always allows for singularities in the foliation of the phase space of the naturally associated Hamiltonian system, which is made precise in Definition \ref{def:integrable} below.}
	
Although the validity of the folklore conjecture appeared conceivable for a long time, there is strong indication for it being false in its very general form, as shown in \cite{corsikaloshin}, where the authors constructed a counterexample which is locally integrable in a $p$-cone in the cotangent bundle. However, certain suitably weakened conjectures are still believed to be true, which is supported by a variety of partial results obtained in this direction, starting from classical ones by Dini \cite{dini}, Darboux \cite{darboux}, and Birkhoff \cite{birkhoff1927}  and further developed in \cite{babenkoneko1995,kiyohara1991,kolokoltsov}. In particular, several works by Bialy, Mironov \cite{bialy1987, bialymironov1, bialymironov2, bialymironov3}, Denisova, Kozlov, Treshchev \cite{denisovakozlov1, denisovakozlov2, denisovakozlov3, kozlovtreshchev, denisovakozlovtreshev2012}, Mironov \cite{mironov}, and others \cite{babenkoneko1995, kolokoltsov, agapov, taimanov}, strongly indicate the validity of the following (yet unproven) conjecture:\footnote{See \cite{probl1, probl2} for recent surveys on open problems and questions concerning geodesics and integrability of finite-dimensional systems.} \emph{Every polynomially integrable metric $g$ on $\T^2$ is of Liouville type.} We refer to Section \ref{sec:integrablemetricsonT2} for details. 

In this paper, we are concerned with a \emph{perturbative} version of the folklore conjecture: Let $(g_\epsi)_{|\epsi| \le \epsi_0}$ for some small $\epsi_0 > 0$ be a family of perturbations of $g \equiv g_0$ in the same conformal class\footnote{Note that on the torus there exist global isothermal coordinates \cite[Chapter~11]{bolsinovfomenko}.} having line-element
\begin{equation} \label{eq:pertlioumet}
\D s^2_\epsi = \big(f_1(x^1) + f_2(x^2) + \epsi \lambda(x^1, x^2)\big) \, \big((\D x^1)^2 + (\D x^2)^2\big)\,,
\end{equation}  
where $\lambda \in C^2(\T^2)$ is assumed to be a Morse function (or constant) and have an absolutely convergent Fourier series. We will assume that the perturbed family $g_\epsi$ \emph{remains integrable}, meaning that there are two independent $C^2$-smooth first integrals and outside of a hypersurface the phase space is foliated by invariant tori. In particular, this means that the deformation preserves sufficiently many rational invariant tori. Then we obtain that $\lambda$ is necessarily \emph{separable} in a sum of two single-valued functions, i.e.
\begin{equation*} \label{eq:separable}
\lambda(x^1,x^2) = \lambda_1(x^1) + \lambda_2(x^2)
\end{equation*}
for some $\lambda_1, \lambda_2 \in C^2(\T)$. Therefore, our main results formulated (somewhat informally) below assert the following: 
\begin{quote} \it 
The class of Liouville metrics is deformationally rigid under a fairly wide class of integrable conformal perturbations.
\end{quote}
{To the best of our knowledge, this is the first instance of a rigidity result for (not necessarily analytically) integrable dynamical systems allowing for singularities in the invariant foliation of the unperturbed system.} The precise statements of our main results are given in Theorem~\ref{thm:1}, Theorem~\ref{thm:2} and Theorem~\ref{thm:3} in Section \ref{sec:Mainresults}. 

\begin{main}
Let $g$ be a non-degenerate Liouville metric on $\T^2$ as in \eqref{eq:lioumet} and assume that the family $(g_\epsi)_{|\epsi| \le \epsi_0}$ of perturbations defined in \eqref{eq:pertlioumet} remains integrable. Then we have the following: 
\begin{itemize}
\item[(i)] In case that $f_1, f_2 \equiv \mathrm{const}.$, then $\lambda$ is separable. 
\item[(ii)] In case that $f_1 \equiv \mathrm{const}.$, $\lambda$ is a trigonometric polynomial in $x^2$ and the relative difference between $f_2$ and its mean value is small, then $\lambda$ is separable. 

If, additionally, $f_2$ is analytic, we have that $\lambda$ is separable, irrespective of the size $\mu_2$ of the fluctuations of $f_2$ (but only for $\mu_2$ outside of an exceptional (Lebesgue) null-set). 
\item[(iii)] In general, if $\lambda$ is a trigonometric polynomial and the relative differences between $f_1, f_2$ and their mean values are small, then $\lambda$ is separable. 

If, additionally, $f_i$ is analytic (for one or both $i = 1, 2$), we have that $\lambda$ is separable, irrespective of the size $\mu_i$ of the fluctuations of $f_i$ (outside of an exceptional null-set). 
\end{itemize}
\end{main}
It is straightforward to generalize our results to higher dimensional tori $\T^d = \R^d / \Z^d$. In order to ease notation and make the presentation clearer, we only mention it here and postpone a more detailed discussion to Appendix \ref{app:higher dim}. 
\begin{rmk}{\rm (Generalization to higher dimensions)} \label{rmk:higher dim} \\
Analogously to \eqref{eq:lioumet}, let $\T^d$ be equipped with a $C^2$-smooth global Liouville metric $g$ having line element
\begin{equation} \label{eq:lioumet d}
\D s^2 = \big(  f_1(x^1) + \ldots + f_d(x^d) \big) \, \big( (\D x^1)^2 + \ldots + (\D x^d)^2\big)\,, 
\end{equation}
where $x = (x^1, ... , x^d) \in \T^d$ are standard periodic coordinates and $f_i \in C^2 (\T)$ for $1 \le i \le d$ are positive Morse functions or constants. Again, it is easy to see that the geodesic flow is integrable. Just as in \eqref{eq:pertlioumet}, we now perturb \eqref{eq:lioumet d} in the same conformal class by some $\lambda \in C^2(\T^d)$ having an absolutely convergent Fourier series. 

Under the assumption that the family of perturbed metrics $(g_\epsi)_{|\epsi| \le \epsi_0}$ remains integrable, we have the following (somewhat informal) rigidity result: \\[2mm]
\emph{Let $f_i \equiv \mathrm{const}.$ for the first $0 \le d_{\rm flat} \le d$ indices, and $f_j$ be analytic for the last $0 \le  d_{\rm anlyt} \le d - d_{\rm flat}$ indices. Then, if $\lambda$ is a trigonometric polynomial in $x^k$ for $k \in \set{d_{\rm flat} + 1, ... , d}$, and the relative differences between $f_{d_{\rm flat} + 1}, ... , f_{d - d_{\rm anlyt}}$ and their mean values are small, we have that $\lambda$ is separable, irrespective of size $\mu_j$ of the fluctuations of $f_j$ (outside of a null-set).     }\\[2mm]
This results unifies and generalizes the three separate statements given above. A precise formulation is given in Theorem \ref{thm:4} in Appendix \ref{app:higher dim}. 
\end{rmk}
The present paper is not the first study on \emph{rigidity} of important integrable systems: In \cite{ADK, KS}, Avila-Kaloshin-de Simoi and Kaloshin-Sorrentino recently solved both, a deformative and a perturbative version of the famous \emph{Birkhoff conjecture} concerning integrable billiards in two dimensions. In a nutshell, their result says that \emph{a strictly convex domain with integrable billiard dynamics sufficiently close to an ellipse is necessarily an ellipse}. This can be viewed as an analogue of the perturbative version of the folklore conjecture formulated above \cite{KS2}. More precisely, our main results concerning general $f_i \in C^2(\T)$ are similar -- in spirit -- to the deformational rigidity for ellipses of \emph{small} eccentricity (cf.~the functions $f_1,f_2$ in \eqref{eq:lioumet} having small fluctuations), which has been shown first in \cite{ADK}, later extended by Huang-Kaloshin-Sorrentino \cite{HKS} to a local notion of integrability and finally improved in \cite{Koval}. The overall strategy pursued in \cite{ADK, KS, HKS} also inspired the arguments employed in the present paper.

In a more recent work, Arnaud-Massetti-Sorrentino \cite{ArnaudMassSorr2022} (replacing the earlier preprint \cite{MassSorr2020}) studied the rigidity of integrable symplectic twist maps on the $2d$-dimensional annulus $\T^d \times \R^d$. More precisely, they consider one-parameter families $(f_\epsi)_{\epsi \in \R}$ of symplectic twist maps $f_\epsi(x,p) = f_0(x , p + \epsi \nabla G(x))$ and prove two main rigidity results: First, in the analytic category for $f_0$ and the perturbation $G$, if a \emph{single} rational invariant Lagrangian graph of $f_\epsi$ exists for infinitely many values of $\epsi$ (e.g.~an interval around zero), then $G$ must necessarily be constant. Second, if $f_0$ is analytic and \emph{completely integrable} (i.e.~not plagued with singularities in the invariant foliation of the phase space, see \cite{BialyMcKay, Suris}), $G$ is of class $C^2$, and sufficiently (infinitely) many rational invariant Lagrangian graphs of $f_\epsi$ persist for small $\epsi \neq 0$, then $G$ must necessarily be constant. Note that in this second result, the entire phase space is foliated by invariant tori, and the perturbation solely depends on the angle variables of the dynamical system.   
In this sense, Theorem~\ref{thm:1} can -- morally -- be viewed as a special case of the second result in~\cite{ArnaudMassSorr2022} (see also \cite[Theorem~2]{MassSorr2020}), but Theorem \ref{thm:2} and Theorem~\ref{thm:3} generalize this statement to more general functional dependencies of the perturbation. Apart from this, our general results (i.e.~those not concerning analytic functions $f_i$) do not require any regularity beyond the standard~$C^2$. 

As mentioned above, by assuming that the family of metrics $(g_\epsi)_{|\epsi| \le \epsi_0}$ remains integrable, we mean that, in particular, sufficiently many rational invariant tori in an isoenergy manifold of the Hamiltonians associated to the metric by the \emph{Maupertuis principle} (see Section \ref{subsec:maupertuis}) are preserved. This will be made precise in  Assumption \eqref{itm:P} below. As we will show, the preservation of an $(n,m)$-rational invariant torus `annihilates' the Fourier coefficients $\lambda_{k_1, k_2}$ with indices $(k_1, k_2) \in \{ (n,m)\}^{\perp}$ of 
\begin{equation*}
	\lambda(x,y) = \sum_{ (k_1, k_2) \in \Z^2} \lambda_{k_1, k_2} \E^{2 \pi \I (k_1 x + k_2 y)}\,,
\end{equation*}
or of the corresponding perturbing mechanical potential, denoted by $U$ later on.  We already noted that, contrary to items (ii) and (iii), the unperturbed metric in our first result is guaranteed to be \emph{completely integrable}. Moreover, the perturbation $\lambda$ depends solely on the \emph{angular} but not the \emph{action} coordinates of the unperturbed problem (see Theorem \ref{thm:liouvillearnold}).  Although the analog of this result for symplectic twist maps in this peculiar setting has already been shown in \cite{ArnaudMassSorr2022, MassSorr2020} by methods similar to ours, we reprove it by pursuing an only slightly different but original strategy, which is suitable for certain inevitable modifications for the proofs of the more general statements under item (ii) and (iii). These two cases (corresponding to surfaces of revolution and general Liouville metrics, see Section~\ref{sec:integrablemetricsonT2}) build on perturbative estimates for (possibly infinitely many) systems of linear equations for the Fourier coefficients. These are obtained from the first order term of an expansion in $\epsi$, somewhat similar to the \emph{(subharmonic) Melnikov potential} in the Poincarè-Melnikov method \cite{Guckenheimer, TreschZube, AKN}. Establishing this expansion as well as proving that the resulting systems of linear equations are of full rank requires perturbative estimates on action-angle coordinates and certain basic objects from weak KAM theory \cite{SorrentinoLecNotes}. Finally, the extension of our results for analytic functions $f_i$ beyond the perturbative regime are proven by exploiting the analytic dependence of the linear system on the size $\mu_i$ of the fluctuations of~$f_i$ (see Appendix~\ref{app:AApendulum}).  

 In the remainder of this introduction, we recall basic notions in geometry and dynamical systems, which are frequently used in this paper, and introduce the problem of classifying integrable metrics on Riemannian manifolds, in particular the torus $\T^2$, as formulated in Questions \eqref{itm:Q1} and \eqref{itm:Q2} below. The experienced reader can skip these parts in their entirety. In Section \ref{sec:Mainresults} we formulate our main results in Theorem~\ref{thm:1}, \ref{thm:2}, and \ref{thm:3}. In Section \ref{sec:integrablemetricsonT2} we present related existing results and known partial answers on the classification problem for integrable metrics on the torus $\T^2$ (a few of which have already been mentioned above) in order to put our results into context. In Section \ref{sec:proofs} we give the proofs of our main results, and, finally, comment on  possible generalizations, different approaches and a list of open problems in Section~\ref{sec:outlook}. As already mentioned above, the precise formulation of our result for higher dimensions is given in Theorem~\ref{thm:4} in Appendix \ref{app:higher dim}. A fundamental perturbation theoretic lemma on action-angle coordinates, a concise study on important analyticity properties of these, and a brief overview of the relevant aspects of weak KAM theory are presented in three further appendices.

\subsection{Geodesic flow on Riemannian manifolds} \label{subsec:geoflow}
Let $(M,g)$ be a (compact) $C^2$-smooth $n$-dimensional connected Riemannian manifold without boundary equipped with the  Riemannian metric $g = (g_{ij}(x))_{ij}$. Geodesics of the given metric $g$ are defined as smooth parameterized curves $\gamma(t) = (x^1(t), \dots, x^n(t))$
that are solutions to the system of differential equations
\begin{equation} \label{eq:geodesicequation1}
\nabla_{\dot{\gamma}} \dot{\gamma} = 0\,,
\end{equation}
where $\dot{\gamma} = \frac{\D \gamma}{\D t}$ denotes the velocity vector of the curve $\gamma$, and $\nabla$ is the covariant derivative operator related to the Levi-Civita connection associated with the metric $g$.

It is well known that for every point $x \in M$ and for every tangent vector $v \in T_xM$ there exists a unique geodesic $\gamma$ with $\gamma(0) = x$ and $\dot{\gamma}(0) = v$, which allows to define the \textit{geodesic flow} as a local $\R$-action on the tangent bundle $TM$ via
\begin{equation*} 
 \R \ni t \mapsto  \Psi_t(V) = \dot{\gamma}_V(t) \in TM \,,
\end{equation*}
where $\gamma_V$ denotes the geodesic with initial data $\dot{\gamma}_V(0)=V \in TM$. 

The geodesic equation \eqref{eq:geodesicequation1} 
can also be viewed as a Hamiltonian system on the cotangent bundle $T^*M$, and the geodesics $\gamma$ themselves can be regarded as projections of trajectories of the Hamiltonian system onto $M$. Therefore, let $x$ and $p$ be natural coordinates on the cotangent bundle $T^*M$, where $x = (x^1, \dots, x^n)$ are the coordinates of a point in $M$ (position space), and $p = (p_1,\dots, p_n)$ are the coordinates of a covector from the cotangent space $T^*_xM$ (momentum space) in the basis $\D x^1, \dots ,\D x^n$. Let $\omega = \D x \wedge \D p$ on $T^*M$ denote the standard symplectic structure and define the Hamiltonian function $H \in C^2(T^*M)$ as 
\begin{equation} \label{eq:Hamiltonian1}
H(x,p) = \frac{1}{2} \sum_{ij} g^{ij}(x)p_ip_j = \frac{1}{2} \vert p \vert_g^2\,.
\end{equation}
The related Hamiltonian vector field $X_H$, defined via $\omega(X_H, \cdot ) = \D H$, governs the associated \textit{Hamiltonian flow} $\Phi_t^{X_H}$ as a local $\R$-action on the cotangent bundle $T^*M$. A trajectory $(x(t),p(t))$ is an integral curve for the Hamiltonian vector field, if and only if the Hamiltonian system of differential equations
\begin{equation} \label{eq:Hamiltonianequations}
\begin{cases}
\dot{p}_i =& - \frac{\partial H}{\partial x^i}  \\
\dot{x}^i =& \frac{\partial H}{\partial p_i} 
\end{cases}\,,
\end{equation}
written in local coordinates, is satisfied. The Hamiltonian flow is also called a \textit{cogeodesic flow} for this special case of a Hamiltonian function \eqref{eq:Hamiltonian1}, and the geodesic flow and the cogeodesic flow are equivalent in the following sense.

\begin{prop}{\rm (Geodesic flow and cogeodesic flow, Prop.~11.1 in \cite{bolsinovfomenko})} \label{prop:cogeo} 
	\begin{itemize}
\item[(a)] If $ (x(t),p(t))$ is an integral curve for $X_H$ on $T^*M$, then the curve $x(t)$ in $M$ is a geodesic and its velocity vector $\dot{x}(t)$ satisfies $\dot{x}^i(t) = \sum_j g^{ij}(x)p_j(t)$.
\item[(b)] Conversely, if $x(t)$ is a geodesic in $M$, then the trajectory $(x(t),p(t))$, where $p_i(t) = \sum_j g_{ij} \dot{x}^j(t)$,
is an integral curve for $X_H$ on $T^*M$.
	\end{itemize}
\end{prop}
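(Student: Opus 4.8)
The plan is to verify directly that Hamilton's equations \eqref{eq:Hamiltonianequations} for the Hamiltonian \eqref{eq:Hamiltonian1} are equivalent to the geodesic equation \eqref{eq:geodesicequation1}, written in local coordinates as $\ddot{x}^i + \sum_{jk} \Gamma^i_{jk}\, \dot{x}^j \dot{x}^k = 0$, where $\Gamma^i_{jk} = \frac{1}{2} \sum_l g^{il}\big(\partial_j g_{lk} + \partial_k g_{lj} - \partial_l g_{jk}\big)$ are the Christoffel symbols of the Levi-Civita connection. First I would compute the two Hamilton equations explicitly: from \eqref{eq:Hamiltonian1} one obtains $\dot{x}^i = \partial H/\partial p_i = \sum_j g^{ij}(x)\, p_j$ (using the symmetry $g^{ij}=g^{ji}$) and $\dot{p}_i = -\partial H/\partial x^i = -\frac{1}{2}\sum_{kl}(\partial_i g^{kl})\, p_k p_l$. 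The first relation is exactly the velocity formula claimed in (a), and it exhibits $p \mapsto \dot{x}$ as the index-raising musical isomorphism; since $g$ is positive definite this map is invertible, with inverse $p_i = \sum_j g_{ij}\dot{x}^j$, which is precisely the relation in (b).

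For part (a), starting from an integral curve $(x(t),p(t))$, I would differentiate the velocity relation $\dot{x}^i = \sum_j g^{ij} p_j$ in $t$, obtaining $\ddot{x}^i = \sum_{jk}(\partial_k g^{ij})\dot{x}^k p_j + \sum_j g^{ij}\dot{p}_j$, and then substitute the second Hamilton equation for $\dot{p}_j$ together with $p_a = \sum_b g_{ab}\dot{x}^b$. The key algebraic input is the identity $\partial_k g^{ij} = -\sum_{ab} g^{ia} g^{jb}\, \partial_k g_{ab}$, obtained by differentiating $\sum_j g^{ij} g_{jl} = \delta^i_l$ in $x^k$. Inserting this and collecting terms, the right-hand side should reorganize into exactly $-\sum_{jk}\Gamma^i_{jk}\dot{x}^j\dot{x}^k$, yielding the geodesic equation. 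Part (b) is the same computation run in reverse: assuming $x(t)$ solves the geodesic equation and defining $p_i = \sum_j g_{ij}\dot{x}^j$, one checks that $\dot{x}^i = \sum_j g^{ij} p_j$ holds tautologically and that $\dot{p}_i = -\partial H/\partial x^i$ follows after the same manipulations.

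The step I expect to require the most care is the reorganization in part (a) of the terms coming from $\partial_k g^{ij}$ into the symmetric combination defining $\Gamma^i_{jk}$: one must symmetrize the product $\dot{x}^j\dot{x}^k$ over the summation indices and match it against the $\frac{1}{2}\big(\partial_j g_{lk} + \partial_k g_{lj} - \partial_l g_{jk}\big)$ pattern, carefully tracking which raised indices are contracted against $p$ and which against the metric. Alternatively — and more transparently — one can bypass the index bookkeeping entirely by invoking the Legendre duality between the geodesic Lagrangian $L(x,\dot{x}) = \frac{1}{2}\sum_{ij} g_{ij}\dot{x}^i\dot{x}^j$ and the Hamiltonian \eqref{eq:Hamiltonian1}: the fiber derivative $\dot{x}\mapsto p$ with $p_i = \partial L/\partial\dot{x}^i = \sum_j g_{ij}\dot{x}^j$ is a diffeomorphism (again by positive-definiteness of $g$), a short computation shows that $H$ is its Legendre transform, and the standard equivalence of the Euler–Lagrange equations of $L$ with Hamilton's equations of $H$ then delivers both (a) and (b) at once, once one recalls that geodesics are precisely the extremals of the energy functional $\int L\,\D t$.
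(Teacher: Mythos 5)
Your proposal is correct. Note, however, that the paper itself offers no proof of this proposition: it is quoted verbatim as Prop.~11.1 from the reference \cite{bolsinovfomenko}, so there is no in-paper argument to compare against. Your argument is the standard one (and essentially what the cited textbook does): both the direct computation and the Legendre-duality shortcut go through. In the direct route, the key identity $\partial_k g^{ij} = -\sum_{ab} g^{ia}g^{jb}\,\partial_k g_{ab}$ together with the substitution $p_a = \sum_b g_{ab}\dot{x}^b$ does reorganize $\ddot{x}^i = \sum_{jk}(\partial_k g^{ij})\dot{x}^k p_j + \sum_j g^{ij}\dot{p}_j$ into $-\sum_{jk}\Gamma^i_{jk}\dot{x}^j\dot{x}^k$ after symmetrizing the first term over the two velocity indices, exactly as you anticipate; and for part (b) the analogous manipulation reduces $\dot{p}_i$ to $\tfrac{1}{2}\sum_{kb}(\partial_i g_{kb})\dot{x}^k\dot{x}^b = -\partial H/\partial x^i$. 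The Legendre-transform alternative is cleaner and equally valid, provided you recall that geodesics with affine parameter are precisely the extremals of the energy functional, so either version of your plan constitutes a complete proof.
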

\subsection{Integrable Hamiltonian systems} \label{subsec:intriemmet}
It is natural to ask for a classification of Riemannian manifolds $(M,g)$, for which the geodesic equations \eqref{eq:geodesicequation1} can be solved explicitly. In the language of integrability of Hamiltonian systems and using the equivalence between geodesic flow and cogeodesic flow from Proposition~\ref{prop:cogeo}, we can formulate the following questions:  
\begin{itemize}
	\item[\bf(\mylabel{itm:Q1}{Q1})] \label{itm:Q1}On which manifolds exist Riemannian metrics whose (co-)geodesic flow is integrable?
	\item[\bf(\mylabel{itm:Q2}{Q2})] \label{itm:Q2}Given such a manifold, how to characterize the class of metrics with integrable geodesic flow?
\end{itemize}  
Clearly, the answers and their complexity hinge on the notion of integrability for the Hamiltonian system (see Section \ref{sec:integrablemetricsonT2}). In this paper we will be concerned with the standard notion, that is \textit{Liouville integrability}, which we recall for the readers convenience. 
\begin{defi} \label{def:integrable}
The geodesic flow on $(M,g)$ is called {\it Liouville integrable}, if there exist $n$ functions $F_1,...,F_n \in C^2(T^*M)$ (called \textit{first integrals}), that are 
\begin{itemize}
\item[(i)] functionally independent on $T^*M$, i.e.~the vector fields $X_{F_1}(x,p), ... , X_{F_n}(x,p)$ are linear independent in $T_{(x,p)}(T^*M)$ for all  $(x,p) \in \mathcal{M} \subset T^*M$, where $\mathcal{M}$ is some open and everywhere dense set of full measure (cf.~the restriction to Morse functions);
\item[(ii)] pairwise in involution, i.e.
\begin{equation*}
\{F_k,F_l\} := \omega(X_{F_k},X_{F_l}) =  \sum_{i} \left(\frac{\partial F_k}{\partial x^i} \frac{\partial F_l}{\partial p_i} - \frac{\partial F_k}{\partial p_i} \frac{\partial F_l}{\partial x^i}\right) = 0\,.
\end{equation*}
\end{itemize}
 Whenever the geodesic flow on $(M,g)$ is Liouville integrable, we call $g$ an {\it integrable metric} on $M$. Moreover, we call the Hamiltonian system \eqref{eq:Hamiltonianequations} (or the corresponding Hamiltonian~\eqref{eq:Hamiltonian1} itself) {\it integrable}, whenever the associated metric $g$ is integrable on~$M$. 
\end{defi}
\begin{rmk}
Whenever the first integrals $F_1, ... , F_n$ can be chosen to be functions that are polynomially in the momentum variables, the metric is often called \textit{polynomially integrable} or \textit{algebraically integrable}. If we aim at indicating the order of the polynomial, we speak of \textit{linearly/quadratically/... integrable} metrics. 
\end{rmk}
\begin{rmk}
Note that, since one can always choose $H=F_1$ as a first integral for the geodesic flow, the question of integrability for one-dimensional manifolds is completely answered. Therefore, the simplest manifolds, for which the answers to \eqref{itm:Q1} and \eqref{itm:Q2} are non-trivial, are two-dimensional. 
\end{rmk}
In this work, we are mainly concerned with a characterization of integrable metrics in the sense of Question \eqref{itm:Q2} for the two-dimensional torus $\T^2$. In this case, the largest known class of such metrics $g$ are so called \textit{Liouville metrics}, where the line element takes the form
\begin{equation} \label{eq:Liouvillemetric1}
\D s^2 = (f_1(x^1)+ f_2(x^2)) \left((\D x^1)^2 + (\D x^2)^2 \right)\,,
\end{equation}
in appropriate global coordinates $(x^1, x^2)$ and where $f_1$ and $f_2$ are sufficiently regular positive periodic functions. See Section \ref{subsec:linandquad} for more details.

The most important result about integrable Hamiltonian systems is the following well known theorem, establishing the existence of so-called \emph{action-angle coordinates}, which shall be employed in our proofs in Section \ref{sec:proofs}.  
\begin{thm} \label{thm:liouvillearnold} {\rm (Liouville-Arnold Theorem \cite{arnold})} \\
Let $H$ be a Liouville integrable Hamiltonian on $T^*M$ and let 
\begin{equation*}
T_{\boldsymbol{f}} = \left\{  (x,p) \in T^*M : F_i(x) = f_i, \, i= 1,...,n \right\}
\end{equation*}
be a regular level surface of the first integrals $F_1, ... , F_n$. Then we have the following: 
\begin{itemize}
\item[(a)] The level set $T_{\boldsymbol{f}} \subset T^*M$ is a smooth submanifold of dimension $n$ that is invariant under the geodesic flow. Any compact connected component of $T_{\boldsymbol{f}}$ (again denoted by $T_{\boldsymbol{f}}$) is diffeomorphic to an $n$-dimensional torus $\T^n$, called a {\rm Liouville torus}.
\item[(b)] There exists a neighborhood $U$ of $T_{\boldsymbol{f}}$ and a coordinate system $ (\theta, I) : U \to \T^n \times \R^n$ with $\omega = \sum_{i=1}^{n}\D \theta^i \wedge \D I_i$, called {\rm action-angle variables}, such that $ T_{\boldsymbol{f}} = I^{-1}(0)$ is a level set of the action variables and $F_i = F_i(I)$. Therefore, the Hamiltonian equations~\eqref{eq:Hamiltonianequations} take the form 
\begin{equation} \label{eq:actionanglecoordinates}
\begin{cases}
\dot{I}_i =& 0 \\
\dot{\theta}^i =& \omega_i(I_1, ... , I_n)\,.
\end{cases}
\end{equation}
\end{itemize} 
\end{thm}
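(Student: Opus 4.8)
The plan is to establish the two parts separately, treating (a) as a topological statement about the level set and (b) as the symplectic construction of coordinates. For part (a), I would first invoke functional independence: on a regular level surface the differentials $\D F_1, \dots, \D F_n$ are linearly independent, so by the regular value theorem $T_{\boldsymbol{f}}$ is an embedded submanifold of $T^*M$ of codimension $n$, hence of dimension $n$. The involution hypothesis $\{F_i,F_j\} = 0$ then enters in two ways. First, each Hamiltonian vector field $X_{F_j}$ is tangent to $T_{\boldsymbol{f}}$, since $X_{F_j}(F_i) = \{F_i,F_j\} = 0$ shows the $F_i$ are preserved along the flow of $X_{F_j}$; because $\omega$ is nondegenerate, the $X_{F_j}$ are linearly independent wherever the $\D F_j$ are, so they furnish a global frame and $T_{\boldsymbol{f}}$ is parallelizable. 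Second, $[X_{F_i},X_{F_j}] = X_{\{F_i,F_j\}} = 0$, so the flows commute and assemble into an $\R^n$-action on $T_{\boldsymbol{f}}$. On a compact connected component this action is transitive (each orbit is open, hence also closed and all of the component) with discrete stabilizer, which compactness forces to be a full-rank lattice $\Lambda \subset \R^n$, yielding $T_{\boldsymbol{f}} \cong \R^n/\Lambda \cong \T^n$. Invariance under the geodesic flow is immediate once $H$ is taken to be one of the $F_i$, so that $X_H$ is among the tangent fields.

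For part (b), the heart of the matter is the construction of action-angle variables and the verification that they are canonical. I would begin by noting that each Liouville torus is Lagrangian: involution gives $\omega(X_{F_i},X_{F_j}) = \{F_i,F_j\} = 0$, so $T_{\boldsymbol{f}}$ is isotropic and, being of dimension $n$, Lagrangian; hence $\omega|_{T_{\boldsymbol{f}}} = 0$ and the Liouville one-form $\alpha$ (a primitive of $\omega$, namely $\pm p\,\D x$) restricts to a \emph{closed} form on $T_{\boldsymbol{f}}$. Choosing a basis $\gamma_1,\dots,\gamma_n$ of $H_1(T_{\boldsymbol{f}};\Z)$, I define the action variables by the periods
\[
I_i = \frac{1}{2\pi}\oint_{\gamma_i}\alpha,
\]
which by Stokes' theorem depend only on the homology classes and vary smoothly with $\boldsymbol{f}$. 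A nondegeneracy argument on the period matrix (equivalently, that the frequency map is a submersion on the regular set) shows that $I = (I_1,\dots,I_n)$ serves as a set of coordinates transverse to the tori, so that $F_i = F_i(I)$ and, after translation, $T_{\boldsymbol{f}} = I^{-1}(0)$.

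The angle variables I would obtain from a generating function: on the neighborhood $U$ the primitive $S(x,I) = \int_{x_0}^x \alpha$, integrated along paths in the torus labeled by $I$, is defined up to integer combinations of the periods $2\pi I_i$, and setting $\theta^i = \partial S/\partial I_i$ produces functions that increase by exactly $2\pi$ around $\gamma_i$ and are otherwise single-valued, hence descend to genuine coordinates on $\T^n$. The final and most delicate step is to verify that $(x,p)\mapsto(\theta,I)$ is symplectic, i.e. $\omega = \sum_i \D\theta^i\wedge\D I_i$, equivalently $\{I_i,I_j\} = \{\theta^i,\theta^j\} = 0$ and $\{\theta^i,I_j\} = \delta^i_j$: here the vanishing of $\{I_i,I_j\}$ is inherited from the involution of the $F_i$, while the relations involving the angles follow from the identity $\D S = \sum_i I_i\,\D\theta^i$ (modulo the closed contribution of $\alpha$) once one checks that $S$ genuinely generates the transformation. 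I expect this verification — the careful bookkeeping of the multivaluedness of $S$ against the lattice of periods, combined with the symplectic identity — to be the main obstacle, since it is precisely where the topology of the torus and the symplectic structure must be reconciled. Once the canonical form of $\omega$ is established, Hamilton's equations reduce immediately to \eqref{eq:actionanglecoordinates}, because $H = H(I)$ gives $\dot I_i = -\partial H/\partial\theta^i = 0$ and $\dot\theta^i = \partial H/\partial I_i =: \omega_i(I)$.
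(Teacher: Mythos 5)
The paper does not prove Theorem \ref{thm:liouvillearnold} at all: it is the classical Liouville--Arnold theorem, stated with a citation to \cite{arnold} and used in Section \ref{sec:proofs} purely as a black box. There is therefore no internal proof to compare yours against; what you have written is, in essence, the standard argument of Arnold (with the refinements usually attributed to Duistermaat), which is presumably the proof the citation points to. As an outline it is correct: the regular-value and involution arguments in part (a), the commuting $\R^n$-action, the identification of a compact connected component with $\R^n/\Lambda$ for a full-rank lattice $\Lambda$, the Lagrangian property of the tori, the actions as periods of the Liouville form, and the generating-function construction of the angles constitute exactly the canonical route.

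Two caveats are worth recording, since they concern precisely the steps you flag but do not carry out, and they are where the real content of the theorem lies. First, the nondegeneracy of the period matrix, i.e.\ that $\boldsymbol{f} \mapsto I(\boldsymbol{f})$ is a local diffeomorphism so that the $I_i$ can replace the $F_i$ as transverse coordinates, is not automatic and needs an argument; it also presupposes that nearby level sets form a locally trivial torus bundle (compactness plus regularity, via Ehresmann), so that the cycles $\gamma_i$ can be chosen consistently on neighboring tori. Second, in the canonicity check, the generating function $S(x,I)=\int_{x_0}^{x}\alpha$ is not only multivalued in the homological sense you describe: it also develops singularities at caustics, i.e.\ at points where the torus fails to be a graph over the base, and the standard proof must verify that the angle variables $\theta^i = \partial S/\partial I_i$ nevertheless extend smoothly across these sets. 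Finally, a purely cosmetic mismatch: the paper's convention is $\T^n = \R^n/\Z^n$, so in its normalization the actions carry no factor $1/(2\pi)$ and the angles increase by $1$, not $2\pi$, around the basic cycles.
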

\subsection{Maupertuis principle} \label{subsec:maupertuis}
In order to approach the questions \eqref{itm:Q1} and \eqref{itm:Q2}, we will utilize the \textit{Maupertuis Principle} (see, e.g., \cite{bolsfomkoz1995}): For a compact Riemannian manifold, $(M,g)$, let 
\begin{equation} \label{eq:Hamiltonian2}
H(x,p) = \frac{1}{2}\sum_{ij} g^{ij}(x)p_ip_j - V(x)
\end{equation}
be a natural mechanical Hamiltonian function on $T^*M$, where $V \in C^2(M)$ denotes some potential function. Moreover, let $T_h = \{ H(x,p) = h\}$ be an isoenergy submanifold for some $h > - \min_x V(x)$ and note that $T_h$ is also an isoenergy submanifold for another system with  Hamiltonian function 
\begin{equation*}
\widetilde{H}(x,p) = \frac{1}{2}\sum_{ij} \frac{g^{ij}(x)}{h+V(x)}p_ip_j\,,
\end{equation*}
i.e.~$T_h = \{\widetilde{H}(x,p) = 1\}$. Now, the \emph{Maupertuis principle} states that the integral curves for the Hamiltonian vector fields $X_H$ and $X_{\widetilde{H}}$ on the fixed isoenergy submanifold $T_h$ coincide. Moreover, if there exists an additional first integral $F$ for $H$ on $T_h$, then there also exists a first integral $\tilde{F}$ for $\widetilde{H}$ on the \emph{whole} of $T^*M$ (except, potentially, at the zero section). 
Finally, note that the vector field $X_{\widetilde{H}}$ gives rise to the geodesic flow of the Riemannian metric $\widetilde{g}$ with 
\begin{equation} \label{eq:transformedmetric}
\widetilde{g}_{ij}(x) = (h+V(x))g_{ij}(x)\,,
\end{equation}
which is the correspondence between Hamiltonian systems and geodesic flows we will use.

\section{Main results} \label{sec:Mainresults}
The main results of this paper are rigidity results in the sense of Question \eqref{itm:Q2} for classes of integrable metrics on the two-torus $\mathbb{T}^2 = \R^2 /\Gamma$, initially equipped with the flat metric, and hence obtained by a Hamiltonian defined on $T^*\T^2$ by means of the Maupertuis principle. In general, $\Gamma \subset \R^2$ is an arbitrary lattice, but we focus on the case $\Gamma = \Z^2$ here. We define the Hamiltonian function
\begin{equation} \label{eq:H0}
H_0(x,p) = \frac{p_1^2}{2}+\frac{p_2^2}{2}  - \mu_1\, V_1(x^1) - \mu_2 \, V_2(x^2)
\end{equation}
 on $T^*\mathbb{T}^2$, where $\mu_i \in [0,\infty)$ are parameters, and $V_i \in C^2(\mathbb{T})$ with $V_i \ge 0$ and $\Vert V_i \Vert_{C^0} \le \mathcal{C}_i$
are Morse functions (or constant). We may assume w.l.o.g.~that $\min_{x^i} V_i(x^i) = 0$. This includes, e.g., the situation of two pendulums, i.e.~$V_i(x^i) = 1- \cos(2\pi x^i)$. The torus coordinates are denoted by $x = (x^1,x^2) \in \mathbb{T}^2$ and the conjugate coordinate pairs are $(x^1,p_1)$ and $(x^2,p_2)$. By the Maupertuis principle, for fixed $e>0$, the Hamiltonian flow on the isoenergy manifold $T_{e} = \{H_0 = e \}$ coincides with the geodesic flow on $\mathbb{T}^2$ with the Liouville metric $g_{e}$ (see eq.~\eqref{eq:Liouvillemetric1} and Section \ref{subsec:linandquad} for more details) having line element
\begin{equation*}
\D s^2_e = \left(e+\mu_1 \, V_1(x^1) +\mu_2 \, V_2(x^2) \right) \left( (\D x^1)^2 + (\D x^2)^2\right)\,.
\end{equation*}
The system with Hamiltonian function \eqref{eq:H0} is clearly integrable in the sense of Definition~\ref{def:integrable}, since an additional conserved quantity can easily be found as
\begin{equation} \label{eq:first int}
F_1(x,p) = \frac{p_1^2}{2} - \mu_1 \, V_1(x^1)\,.
\end{equation}
The Liouville foliation of $T_{e}$ has the following qualitative structure, that is similar to the phase portrait of the pendulum. The common level surface 
 \begin{equation*}
T_{(e,f)} = \{ H_0 = e, \ F_1 = f  \}
 \end{equation*}
 differs in shape, depending on the values of $e$ and $f$. Recall that $e >0$ and $V_i \ge 0$. If (i) $f\in (- \mu_1 \, \max_{x^1} V_1(x^1), 0)$ and $e-f > 0$, $T_{(e,f)}$ is an \textit{annulus};  if (ii) $f > 0$ and $e-f >0$, $T_{(e,f)}$ is a \textit{torus}; if (iii) $f > 0 $ and $e-f \in (- \mu_2\, \max_{x^2} V_2(x^2), 0)$, $T_{(e,f)}$ is an \textit{annulus}. Therefore, if $V_1$ and $V_2$ are both non-constant, the foliation qualitatively exhibits  a pendulum-like phase portrait (see Figure \ref{fig:1}). 
\begin{SCfigure}[1.15][htb]
	\centering
	\includegraphics[width=0.65\textwidth]{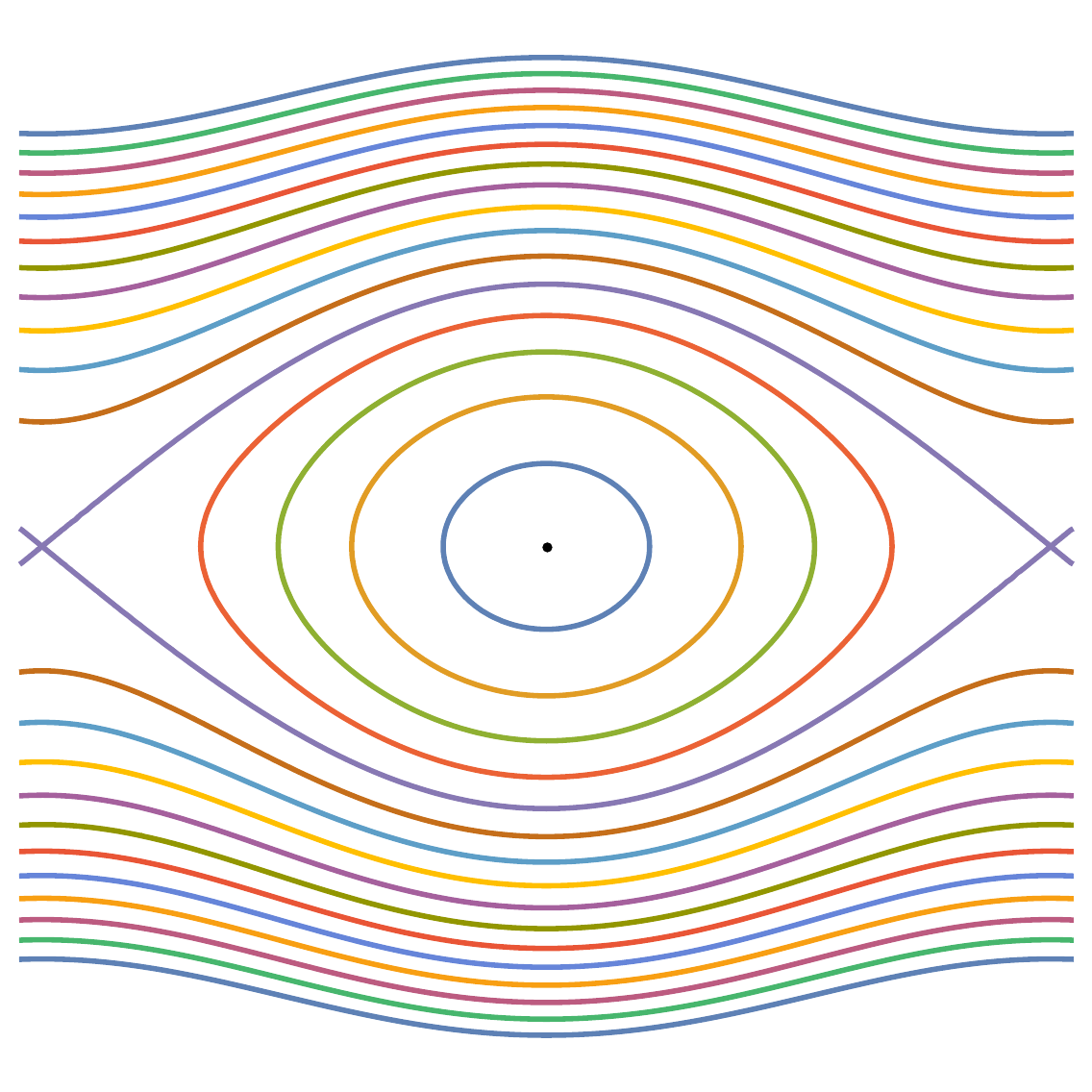}
	
	\caption[]{Schematic picture of the Liouville foliation of the phase space $T^*\T \cong \T \times \R$ for the classical one-dimensional pendulum system described by the Hamiltonian function
		\begin{minipage}{\linewidth}
			\begin{equation*}
				H(x,p) = \frac{p^2}{2} - \big( 1- \cos(2 \pi x) \big)\,.
			\end{equation*}
		\end{minipage}
	\\[2mm]
	The horizontal direction covers slightly more than one period of length one. 
	}
	\label{fig:1}
\end{SCfigure}
\subsection{Definitions and assumptions}
Our main results concern perturbations of the Hamiltonian function \eqref{eq:H0} in the class of mechanical systems as 
\begin{equation} \label{eq:Heps}
H_\epsi(x,p) = H_0(x,p) + \epsi U(x)\,,
\end{equation}
where $\epsi \in \R$ and $U \in C^2(\mathbb{T}^2)$ denotes a perturbing potential, which is assumed to 
be a Morse function (or a constant) and 
have an absolutely convergent Fourier series\footnote{Note that in two dimensions, $C^2$-regularity is not sufficient for ensuring an absolutely convergent Fourier series, although in one dimension it is.}
\begin{equation*} \label{eq:Fourierdecomp}
U(x) = \sum_{k_1 \in \Z}  U_{k_1}(x^2) \, \E^{\I 2 \pi k_1 x^1 }= \sum_{(k_1,k_2)  \in \Z^2} U_{k_1,k_2} \, \E^{\I 2\pi (k_1 x^1 + k_2 x^2)}\,.
\end{equation*}
In the following, we introduce several subsets of $\Z^2$ in such a way, that their definitions immediately carry over in arbitrary dimension $d \in \N$ (see Remark \ref{rmk:higher dim}). 
First, we define the \emph{spectrum} of $U$, i.e.~the set of non-vanishing Fourier coefficients, as
\begin{equation} \label{eq:spectrum}
\mathcal{S}_U := \left\{ \boldsymbol{k} = (k_1, k_2) \in \Z^2 : U_{\boldsymbol{k}} \neq 0  \right\}\,,
\end{equation}
while the \emph{non-singular} spectrum is denoted by
\begin{equation} \label{eq:nonsinspec}
\mathcal{S}_{U,0} := \left\{ \boldsymbol{k} \in \mathcal{S}_U : \exists i \neq j \ \mathrm{s.th.}\  k_i \cdot k_j \neq 0  \right\}\,. 
\end{equation}
Moreover, we define the \emph{coprime set of the orthogonal complement of $\mathcal{S}_U$} as well as its \emph{non-singular subset} via 
\begin{equation} \label{eq:basis0perp}
\mathcal{B}(\mathcal{S}_U^\perp) := \left\{  \boldsymbol{b} \in \mathcal{S}_U^\perp : \boldsymbol{b} \ \mathrm{coprime} \right\} \quad \text{and} \quad \mathcal{B}_0(\mathcal{S}^\perp_U) := \left\{  \boldsymbol{b} \in \mathcal{B}(\mathcal{S}^\perp_U) : \prod_i b_i \neq 0 \right\}\,,
\end{equation}
respectively. Note that the orthogonal complement is taken within $\Z^2$. For the proofs in Section~\ref{sec:proofs} and the generalization in Appendix \ref{app:higher dim} it is important to observe that for every $\boldsymbol{k} \in \mathcal{S}_{U,0}$ exists some $\boldsymbol{b} \in \mathcal{B}_0(\mathcal{S}_U^\perp)$ such that $\boldsymbol{b} \cdot \boldsymbol{k} = 0$.

Our main results will be formulated under the following assumptions. 
\\[5mm]
\textbf{1.~Assumptions on the perturbed Hamiltonian function \eqref{eq:Heps}.} \\ Let $H_0 \in C^2(T^*\T^2)$ denote the Hamiltonian function from \eqref{eq:H0} with $\Vert V_i \Vert_{C^0} \le \mathcal{C}_i$ and $\mu_i \in [0,\tilde{\mu}_i]$ for some $\tilde{\mu_i} \in [0,\infty)$, $i \in \set{1,2}$, and $U$ be a perturbing potential as in \eqref{eq:Heps}, which satisfies one of the following assumptions. 
\begin{itemize}
	\item[\bf(\mylabel{itm:A1}{A1})] \label{itm:A1}If $\tilde{\mu}_1 = \tilde{\mu}_2 = 0$, we have $U \in C^2(\mathbb{T}^2)$. 
\item[\bf(\mylabel{itm:A2}{A2})] \label{itm:A2} If, w.l.o.g., $\tilde{\mu}_1 = 0$ and $\tilde{\mu}_2 > 0$, we have $U \in C^2(\mathbb{T}^2)$ 
	and there exists $d^{(2)} \ge 0$ such that 
		\begin{equation} \label{eq:A2}
	\mathcal{S}_U \subset \Z \times \big[-d^{(2)}, d^{(2)}\big] \,,
	\end{equation}
	i.e.~$U$ is a trigonometric polynomial in the second variable $x^2$. 
	\item[\bf(\mylabel{itm:A3}{A3})] \label{itm:A3} If $\tilde{\mu}_1, \tilde{\mu}_2 > 0$, we have $U \in C^2(\mathbb{T}^2)$ 
		and there exist $d^{(1)}, d^{(2)}\ge 0$ such that 
	\begin{equation} \label{eq:A3}
	\mathcal{S}_U \subset \big[- d^{(1)}, d^{(1)}\big] \times \big[- d^{(2)}, d^{(2)}\big]\,,
	\end{equation}
	i.e.~$U$ is a trigonometric polynomial. 
\end{itemize}
We denote the minimum over all $d^{(i)}$ such that \eqref{eq:A2} resp.~\eqref{eq:A3} holds as $\deg_U^{(i)}$ and call it the \textit{$i$-degree of $U$}. Whenever we refer to one of the Assumptions \eqref{itm:A1}, \eqref{itm:A2}, or \eqref{itm:A3}, we implicitly assume that $H_0 \in C^2(T^*\T^2)$ is of the form \eqref{eq:H0}.  
\\[5mm]
Note that the assumption on the spectrum \eqref{eq:spectrum} of $U$ is more restrictive when we include more general potentials $\mu_1 V_1$ and $\mu_2 V_2$ in the unperturbed Hamiltonian $H_0$ \eqref{eq:H0}. 

The following basic proposition is fundamental for the precise formulation of our assumptions concerning preservation of integrability. It rephrases certain aspects of the Liouville-Arnold Theorem~\ref{thm:liouvillearnold} in our concrete setting using standard notions from weak KAM theory (see Appendix~\ref{app:weakKAM}). 
\begin{prop} {\rm (Liouville-Arnold Theorem and weak KAM theory \cite{SorrentinoLecNotes})}\label{prop:KAM} \\
		Let $H_0 \in C^2(T^*\T^2)$ be the Hamiltonian function from \eqref{eq:H0}.
	\begin{itemize}
\item[(a)] In the region of phase space, where $f > 0$ as well as $e - f > 0$, each of the two connected component of a Liouville torus $T_{(e, f)}$ (again denoted by $T_{(e, f)}$) is a Lipschitz\footnote{We will see in Appendix \ref{app:weakKAM} that $u_\cC \in C^3(\T^2)$, so the regularity of $T_{(e,f)}$ is in fact $C^2$.} Lagrangian graph, i.e.
\begin{equation*}
T_{(e, f)} = \left\{ (x, \cC + \nabla_x u_{\cC}) : x \in \T^2 \right\}
\end{equation*}
for a unique cohomology class $\cC \in H^1(\T^2, \R) \cong \R^2$ with $|c_i| > \sqrt{\mu_i} \, \mathfrak{c}(V_i)$ and $u_\cC \in C^{1,1}(\T^2)$,\footnote{Here, $\mathfrak{c}(V_i):= \int_{0}^{1} \sqrt{2 \,  V_i(x^i)} \, \D x^i$ (see Appendix \ref{app:weakKAM}) and $C^{1,1}$ denotes the functions in $C^1$ with Lipschitz derivative.}  so we may equivalently write $T_{(e, f)} \equiv T_\cC$.
The function $u_\cC \in C^{1,1}(\T^2)$ is a classical solution of the 
Hamilton-Jacobi equation 
\begin{equation*}
\alpha_{}(\cC) = H_0(x, \cC + \nabla_x u_{ \cC}(x))\,,
\end{equation*}
where the lhs.~is Mather's $\alpha$-function (see Appendix \ref{app:weakKAM}). 
\item[(b)] The Hamiltonian flow on $T_\cC$ is conjugated to a rotation on $\T^2$, i.e.~there exists a diffeomorphism $\varphi: \T^2 \to T_\cC$ such that $\varphi^{-1}\circ \Phi_t^{X_H} \circ \varphi = R_t^{\boldsymbol{\omega}}$, $\forall t \in \R$,  where $R_t^{\boldsymbol{\omega}}: \T^2 \to \T^2, \, x \mapsto (x + \boldsymbol{\omega} t \mod \Z^2)$ for some rotation vector $\boldsymbol{\omega} \in \R^2$. 
	\end{itemize}
\end{prop}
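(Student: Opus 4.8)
The plan is to exploit the complete separability of $H_0$ and reduce the whole statement to two decoupled one-dimensional pendula, integrate these explicitly in their rotating regime, glue the resulting data into the required two-dimensional objects, and finally invoke weak KAM theory only to identify the arising constant with Mather's $\alpha$-function. Since $H_0 = h_1 + h_2$ with $h_i(x^i,p_i) = \tfrac12 p_i^2 - \mu_i V_i(x^i)$ and $F_1 = h_1$, one has $\{H_0,F_1\}=0$ (the two terms depend on disjoint conjugate pairs), and the joint level set $T_{(e,f)}$ is simply the product of the one-dimensional level sets $\{h_1=f\}$ and $\{h_2=e-f\}$, invariant under the flow because it is a common level of the two commuting integrals.

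First I would treat the regime $f>0$ and $e-f>0$, where, setting $E_1=f$ and $E_2=e-f>0$ and using $V_i\ge 0$, the radicand $2(E_i+\mu_i V_i(x^i)) \ge 2E_i > 0$ is bounded away from zero. Hence each one-dimensional level set is the disjoint union of the two smooth graphs $p_i = \pm\sqrt{2(E_i+\mu_i V_i(x^i))}$ over $\T$, and each connected component of $T_{(e,f)}$ is a graph $\{(x,(p_1(x^1),p_2(x^2)))\}$ over the whole of $\T^2$, labelled by the two sign choices of the momenta. Because $p_i$ depends only on $x^i$, the $1$-form $p_1\,\D x^1 + p_2\,\D x^2$ is closed, so its graph is Lagrangian; I would define the cohomology class componentwise by $c_i = \int_0^1 p_i(x^i)\,\D x^i = \pm\int_0^1\sqrt{2(E_i+\mu_i V_i)}\,\D x^i$ and set $u_\cC(x) = \sum_i \int_0^{x^i}\big(p_i(s)-c_i\big)\,\D s$, which is single-valued on $\T^2$ by the very definition of $c_i$ and satisfies $\cC + \nabla_x u_\cC = (p_1(x^1),p_2(x^2))$. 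Smoothness of the (strictly positive) radicand yields $u_\cC\in C^3$, sharpening the generic $C^{1,1}$ weak-KAM regularity, while the strict pointwise bound $\sqrt{2(E_i+\mu_i V_i)} > \sqrt{2\mu_i V_i}$ integrates to $|c_i| > \sqrt{\mu_i}\,\mathfrak{c}(V_i)$. Substituting the graph into the Hamiltonian gives $H_0(x,\cC+\nabla_x u_\cC(x)) = E_1+E_2 = e$, a constant, so $u_\cC$ is a classical solution of the stationary Hamilton--Jacobi equation at level $e$.

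The identification of this constant with Mather's $\alpha(\cC)$ is the one step that is not a direct computation. Here I would use that $H_0$ is a Tonelli Hamiltonian (quadratic, hence strictly convex and superlinear in $p$): a global $C^1$ solution of $H_0(x,\cC+\nabla u)=k$ is automatically a forward and backward weak KAM solution, which forces $k=\alpha(\cC)$, makes the associated graph the unique invariant Lagrangian graph in its cohomology class, and determines $u_\cC$ up to an additive constant. This simultaneously yields the uniqueness of $\cC$ attached to each component and the asserted $\alpha(\cC)=e$, and I would cite the Tonelli/Fathi theory recalled in the weak-KAM appendix for these facts.

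Finally, for part (b) I would construct the conjugacy from the one-dimensional angle variables. On each rotating factor $\dot{x}^i = \partial_{p_i} H_0 = p_i(x^i)$ never vanishes, so the time-change $\theta^i = T_i^{-1}\int^{x^i}\D s/p_i(s)$, with $T_i=\oint \D x^i/p_i(x^i)$, is a diffeomorphism of $\T$ straightening the flow to $\dot\theta^i = \omega_i := T_i^{-1}$; the product diffeomorphism $\varphi:\T^2\to T_\cC$ then conjugates $\Phi_t^{X_{H_0}}$ to the rigid rotation $R_t^{\boldsymbol{\omega}}$ with $\boldsymbol{\omega}=(\omega_1,\omega_2)$. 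The one-dimensional integrations throughout are entirely routine; the main obstacle is the weak-KAM identification of the Hamilton--Jacobi constant with $\alpha(\cC)$ together with the uniqueness it provides, while everything else is an explicit application of the Liouville--Arnold structure in the separable, above-separatrix regime.
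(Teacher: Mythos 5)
Your proposal is correct, and its constructive core coincides with what the paper actually does: the paper's proof of this proposition is contained in Remark \ref{rmk:propertyP1} of Appendix \ref{app:weakKAM}, where $u_\cC$ is defined through exactly the gradient formula you derive, namely $\nabla_x u_\cC(x) = -\cC \pm \big(\sqrt{2(\alpha_1(c_1)+\mu_1V_1(x^1))},\,\sqrt{2(\alpha_2(c_2)+\mu_2V_2(x^2))}\big)$, and part (b) rests on the same one-dimensional angle-variable construction recorded in Lemma \ref{lem:pertLemm}. Where you genuinely diverge is in identifying the Hamilton--Jacobi constant with Mather's $\alpha$-function and in obtaining uniqueness: you black-box the Fathi/Tonelli fact that a global $C^1$ solution of $H_0(x,\cC+\nabla u)=k$ can only exist for $k=\alpha(\cC)$ (a fact the paper indeed records in Appendix \ref{app:weakKAM}, so the citation is legitimate), whereas the paper instead computes the one-dimensional Mather sets and $\alpha$-functions from the ground up -- classifying the invariant measures, invoking Mather's graph Theorem \ref{thm:mathergraph}, Carneiro's Theorem \ref{thm:carneiro} and the Fenchel--Legendre inequality -- arriving at the explicit formula \eqref{eq:Mathersetandfunction}, and then uses additivity $\alpha(\cC)=\alpha_1(c_1)+\alpha_2(c_2)$. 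Your route is leaner and fully sufficient for the proposition itself (note also that uniqueness of $\cC$ follows even more directly from your explicit computation $c_i=\int_0^1 p_i\,\D x^i$, since the graph determines its own $1$-form and hence its periods); the paper's heavier route buys the explicit form of $\alpha$ on \emph{all} of $H^1(\T^2,\R)$, including the flat piece $|c|\le\sqrt{\mu}\,\mathfrak{c}(V)$, which is not needed here but is reused elsewhere (the identification of $h^{(i)}_{\mu_i}$ with the one-dimensional $\alpha$-functions in the proofs of Theorems \ref{thm:2} and \ref{thm:3}, and the monotonicity/regularity facts of Remark \ref{rmk:reg and mon of alpha}). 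One cosmetic caveat: with $f>0$ and $e-f>0$ there are four sign choices $(\pm,\pm)$ and hence four graph components, each with its own class $\cC$; this is a slight imprecision in the proposition's wording (``two'' components) rather than in your argument, but you should state the count unambiguously rather than refer to ``the two sign choices of the momenta.''
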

An invariant Liouville torus $T_\cC$ is called \emph{irrational} or \emph{non-resonant}, if ${\K} \cdot \boldsymbol{\omega}  \neq 0$ for all $\boldsymbol{k} \in \Z^2 \setminus \{ 0 \}$. If this is not the case, the invariant torus is \emph{rational} or \emph{resonant}. We shall also call it an \emph{(invariant) $\boldsymbol{k}$-torus}, in case that $\boldsymbol{k} \cdot \omf = 0$. For two-dimensional manifolds (and if $\omega_2 \neq 0$), this can be phrased as a distinction between $\omega_1/\omega_2 \notin \mathbb{Q}$ and $\omega_1/\omega_2 \in \mathbb{Q}$. 
\\[5mm]
\noindent\textbf{2.~Assumptions on the preserved integrability of \eqref{eq:Heps}.} \\*
Let $H_0 \in C^2(T^*\T^2)$ denote the Hamiltonian function from \eqref{eq:H0} satisfying one of the Assumptions \eqref{itm:A1} - \eqref{itm:A3}, and $U$ a perturbing potential as in \eqref{eq:Heps} such that the following statement concerning the perturbed Hamilton-Jacobi equation (HJE)
\begin{equation} \label{eq:HJE}
\alpha_{\epsi}(\cC) = H_\epsi(x, \cC + \nabla_x u_{\epsi, \cC}(x) )
\end{equation}
as well as the preserved integrability of $H_\epsi$ is satisfied. 

\begin{itemize} 
\item[\bf(\mylabel{itm:P}{P})] \label{itm:P}There exists an energy $e >0$, such that for every $(n,m) \in \mathcal{B}_0(\mathcal{S}_U^\perp)$ (recall \eqref{eq:basis0perp}) and $\mu_i \in [0,\tilde{\mu}_i]$, $i \in \set{1,2}$, there exists a sequence $(\epsi_k)_{k \in \N}$ with $\epsi_k \neq 0$ but $\epsi_k \to 0$ such that we have the following:  
	\begin{itemize}
		\item[(i)] The $(n,m)$-torus from Proposition~\ref{prop:KAM}  characterized by $\boldsymbol{c} \in H^1(\T^2, \R)$ with 
		\begin{equation} \label{eq:ccond}
		|c_i|> \sqrt{\mu_i}\, \mathfrak{c}(V_i)
		\end{equation}
		in the isoenergy submanifold $T_e$ is preserved under the sequence of deformations $(H_{\epsi_k})_{k \in \N}$. 
		\item[(ii)] For $\cC \in H^1(\T^2, \R)$ satisfying \eqref{eq:ccond}, Mather's $\alpha$-function and a solution $u_{\epsi,\cC}:\T^2 \to \mathbb{R}$ of the HJE \eqref{eq:HJE} can be expanded to first order in $\epsi$, i.e. 
		\begin{equation} \label{eq:propertyP1}
		u_{\epsi, \cC} = u_{\cC}^{(0)} + \epsi u_{ \cC}^{(1)} + \mathcal{O}_\cC(\epsi^2)\quad \text{and} \quad \alpha_{\epsi} = \alpha^{(0)} + \epsi \alpha^{(1)} + \mathcal{O}(\epsi^2)\,,
		\end{equation}  
		where $u_{\cC}^{(0)} ,\, u_{ \cC}^{(1)} \in C^{1,1}(\T^2)$  and $O_\cC(\epsi^2)$ is understood in $C^{1,1}$-sense.\footnote{Having $C^1$-regularity here would be sufficient for our proofs in Section \ref{sec:proofs}. However, we chose $C^{1,1}$-regularity for the formulation of Assumption \eqref{itm:P} to be in agreement with the statement from Proposition~\ref{prop:KAM}~(b). More precisely, $C^{1,1}$-regularity is kind of a compromise between the true $C^3$-regularity of $u_\cC$ and the required $C^1$-regularity of $u_{\epsi, \cC}$. In addition, $C^{1,1}$ is the optimal regularity for \emph{sub}solutions of \eqref{eq:HJE}, which exist, even if the Hamiltonian $H_\epsi$ is \emph{not} integrable (see \cite{fathisic, bernard}).}
	\end{itemize}
\end{itemize}
 We comment on the validity of assuming \eqref{itm:P} in Remark \ref{rmk:propertyP1} in Appendix \ref{app:weakKAM}. Moreover, we shall also discuss an alternative to \eqref{eq:propertyP1} in Remark \ref{rmk:alterP(iii)}. Finally, one can easily see from the proofs given in Section \ref{sec:proofs}, that the condition on a \emph{fixed} isoenergy manifold $\{ H_\epsi = e\}$ can be relaxed to having preservation of invariant tori in isoenergy manifolds characterized by energies $e \ge e_0$ for some fixed $e_0 > 0$. 
\\[5mm]
Note that the rational invariant tori are the most `fragile' objects of an integrable system as the KAM Theorem \cite{KAM_K, KAM_A, KAM_M} predicts that general (non-integrable) perturbations preserve only `sufficiently irrational' (\emph{Diophantine}) invariant tori.
\subsection{Results}
As mentioned above, our main results in Theorem~\ref{thm:1}, Theorem~\ref{thm:2}, and Theorem~\ref{thm:3} concern rigidity of certain deformations of integrable metrics (in the sense of Question~\eqref{itm:Q2}), which, by means of the Maupertuis principle, correspond to perturbations of the form \eqref{eq:Heps}. More precisely, under the assumptions formulated above, our results show that the perturbed Hamiltonian function \eqref{eq:Heps} has to be of the same general form as the unperturbed Hamiltonian function \eqref{eq:H0}. This means, that the potential $U$ is \emph{separable}, i.e.~there exist $U_1,U_2 \in C^2(\T^2)$ such that 
\begin{equation*}
U(x) = U_1(x^1) + U_2(x^2)\,.
\end{equation*}

\begin{theorem} \label{thm:1} Let $H_\epsi$ from \eqref{eq:Heps} satisfy Assumption \eqref{itm:A1} and Assumption \eqref{itm:P} for some energy $e > 0$. Then $U$ is separable in a sum of two single-valued functions. 
\end{theorem}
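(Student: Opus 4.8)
The plan is to work entirely in the flat setting of Assumption~\eqref{itm:A1}, where $\mu_1=\mu_2=0$, so that $H_\epsi(x,p)=\tfrac12(p_1^2+p_2^2)+\epsi U(x)$ and the unperturbed invariant tori $T_\cC=\{p=\cC\}$ carry the linear flow $x\mapsto x+\cC t$; in particular the rotation vector of $T_\cC$ is $\omf=\cC$. First I would feed the expansions $u_{\epsi,\cC}=u_\cC^{(0)}+\epsi u_\cC^{(1)}+\mathcal O_\cC(\epsi^2)$ and $\alpha_\epsi=\alpha^{(0)}+\epsi\alpha^{(1)}+\mathcal O(\epsi^2)$ from Assumption~\eqref{itm:P}(ii) into the HJE~\eqref{eq:HJE} and collect powers of $\epsi$ along the sequence $\epsi_k\to0$. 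At order $\epsi^0$ the equation reads $\alpha^{(0)}=\tfrac12|\cC+\nabla u_\cC^{(0)}|^2$, whose relevant smooth graph-type solution on the flat torus is the trivial one, $\nabla u_\cC^{(0)}\equiv0$ with $\alpha^{(0)}=\tfrac12|\cC|^2$.

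At order $\epsi^1$ this leaves the linear cohomological (transport) equation
\begin{equation*}
\cC\cdot\nabla u_\cC^{(1)}(x)=\alpha^{(1)}-U(x)\,,
\end{equation*}
which plays the role of the subharmonic Melnikov relation alluded to in the introduction. Passing to Fourier series, legitimate since $U$ is absolutely convergent, the mode $\K=0$ fixes $\alpha^{(1)}=U_0$ as the mean of $U$, while for $\K\neq0$ one obtains $2\pi\I\,(\cC\cdot\K)\,\widehat{u^{(1)}}_{\K}=-U_{\K}$. The decisive point is the solvability obstruction: for every \emph{resonant} index $\K$ of $T_\cC$, i.e.\ $\cC\cdot\K=0$, the left-hand side vanishes, so a periodic $u_\cC^{(1)}$ can exist only if $U_{\K}=0$. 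In other words, persistence of the rational torus $T_\cC$ annihilates exactly those Fourier coefficients of $U$ supported on its resonance line $\{\K:\cC\cdot\K=0\}$.

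It then remains to verify that the preserved tori supplied by Assumption~\eqref{itm:P} sweep out enough resonance lines. Fix any $\K=(k_1,k_2)$ with $k_1k_2\neq0$ and suppose $U_{\K}\neq0$, so that $\K\in\mathcal S_{U,0}$. By the observation recorded below \eqref{eq:basis0perp}, there is $(n,m)\in\mathcal B_0(\mathcal S_U^\perp)$ with $(n,m)\cdot\K=0$, i.e.\ $\K\in\{(n,m)\}^\perp$. The rational torus associated with $(n,m)$ — whose resonance line is precisely $\{(n,m)\}^\perp\ni\K$ — is preserved along some $\epsi_k\to0$ by \eqref{itm:P}(i). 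The first-order obstruction from the previous paragraph then forces $U_{\K}=0$, contradicting $\K\in\mathcal S_{U,0}$. Hence $U_{\K}=0$ whenever $k_1k_2\neq0$, which is exactly the assertion that $U(x)=U_1(x^1)+U_2(x^2)$ is separable.

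The main obstacle I anticipate is not the algebra but the analytic justification underlying the order-$\epsi^1$ step: one must ensure that mere persistence of the $C^{1,1}$ Lagrangian graph along a sequence $\epsi_k\to0$ genuinely produces a periodic first-order corrector $u_\cC^{(1)}$ and the \emph{exact} constraint $U_{\K}=0$, rather than only an approximate one. This is precisely the content that Assumption~\eqref{itm:P}(ii) is designed to license, by granting the differentiability of the family in $\epsi$ in the $C^{1,1}$-sense. The only further care needed is to confirm that on the flat torus the zeroth-order graph is the trivial one, $\nabla u_\cC^{(0)}\equiv0$, so that the transport operator in the first-order equation has the clean constant-coefficient symbol $2\pi\I\,\cC\cdot\K$ exploited above; this is what makes the flat case notably simpler than the surfaces-of-revolution and general Liouville cases treated in Theorems~\ref{thm:2} and~\ref{thm:3}, where the unperturbed action--angle coordinates are no longer global and must be controlled perturbatively.
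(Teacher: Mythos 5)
Your proposal is correct and follows essentially the same route as the paper: trivial action-angle coordinates and the choice $u^{(0)}_\cC\equiv 0$ in the flat case, the first-order relation $\cC\cdot\nabla u^{(1)}_\cC=\alpha^{(1)}-U$ extracted from the HJE via Assumption~\eqref{itm:P}, and annihilation of the Fourier modes of $U$ lying on the resonance line of each preserved torus with rotation vector parallel to $(n,m)\in\mathcal{B}_0(\mathcal{S}_U^\perp)$. The only cosmetic differences are that you read the obstruction $U_\K=0$ (for $\cC\cdot\K=0$) directly off the Fourier transform of the cohomological equation, whereas the paper first averages that same equation along the closed orbits of the torus and then Fourier-decomposes (Lemma~\ref{lem:annihilation}), and that you obtain $\alpha^{(1)}=[U]_0$ from the zero mode rather than citing Proposition~\ref{prop:gomes}; these computations are identical in content.
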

Put briefly, in view of of the Maupertuis principle, this means that integrable deformations in the same conformal class of a flat metric are Liouville metrics. 
Now, Theorem~\ref{thm:2} generalizes Theorem \ref{thm:1} to Hamiltonian functions which depend on one toral position variable via a mechanical potential. 
\begin{theorem} \label{thm:2}
Let $H_\epsi$ from \eqref{eq:Heps} satisfy Assumption \eqref{itm:A2}
and Assumption~\eqref{itm:P} for some energy $e > 0$. 
Then the following holds: 
\begin{itemize}
	\item[(a)] If $\tilde{\mu}_2 = \tilde{\mu}_2(\mathcal{C}_2, \deg_U^{(2)}, e)>0$ is small enough (see Lemma~\ref{lem:continuity}), we have that $U$ is separable in a sum of two single-valued functions. 
	\item[(b)] If, additionally, $V_2$ is analytic, then $U$ is separable, irrespective of $\tilde{\mu}_2 > 0$, but only for $\mu_2 \in [0,\tilde{\mu}_2]$ outside of an exceptional null-set. 
\end{itemize}

\end{theorem}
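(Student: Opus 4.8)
The plan is to prove that every \emph{mixed} Fourier coefficient of $U$ vanishes, i.e.\ $U_{k_1,k_2}=0$ whenever $k_1 k_2\neq 0$, which by the definition of $\mathcal S_{U,0}$ is exactly separability. Fix $n\in\Z\setminus\{0\}$ once and for all; the goal is to show $U_{n,k_2}=0$ for every $k_2\neq0$. The starting point, as for Theorem~\ref{thm:1}, is to expand the perturbed Hamilton--Jacobi equation~\eqref{eq:HJE} to first order in $\epsi$, which is legitimate by Assumption~\eqref{itm:P}. Writing $u_{\epsi,\cC}=u^{(0)}_\cC+\epsi u^{(1)}_\cC+\mathcal O_\cC(\epsi^2)$ and $\alpha_\epsi=\alpha^{(0)}+\epsi\alpha^{(1)}+\mathcal O(\epsi^2)$ and collecting the $\epsi^1$-terms gives the linear transport equation
\begin{equation*}
\nabla_pH_0\big(x,\cC+\nabla_x u^{(0)}_\cC\big)\cdot\nabla_x u^{(1)}_\cC=\alpha^{(1)}-U,
\end{equation*}
whose left-hand side is the derivative of $u^{(1)}_\cC$ along the unperturbed flow on the invariant torus $T_\cC$ from Proposition~\ref{prop:KAM}. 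Passing to the action--angle coordinates $(\theta,I)$ of $H_0$ supplied by Theorem~\ref{thm:liouvillearnold}, the solvability of this equation on a \emph{preserved rational} torus forces all resonant angular harmonics of $U$ to vanish: if the torus carries the resonance $\K\cdot\omf=0$, then $\tilde U_{j\K}=0$ for all $j\neq0$. This is the analogue of the subharmonic Melnikov condition, and it is precisely the step where \eqref{itm:P} is used.

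The point special to $\tilde\mu_2>0$ is to transfer these angular conditions back to the position-space coefficients $U_{k_1,k_2}$. Since the first direction is flat ($\mu_1=0$) we have $\theta^1=x^1$, while $\theta^2$ is the nonlinear pendulum angle of the $x^2$-motion, so I would expand $\E^{\I2\pi k_2 x^2(\theta^2;I_2)}=\sum_l A_{k_2,l}(I_2)\,\E^{\I2\pi l\theta^2}$, whence $\tilde U_{(k_1,l)}(I_2)=\sum_{k_2}U_{k_1,k_2}A_{k_2,l}(I_2)$. For each $l\in\{-\deg_U^{(2)},\dots,\deg_U^{(2)}\}\setminus\{0\}$ I would select a preserved rational torus with $\omega_1/\omega_2=-l/n$ (using that \eqref{itm:P} may be invoked for any energy $e\ge e_0$ and that in the rotational regime the frequency ratio sweeps an interval), whose resonance contains the harmonic $(n,l)$. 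Because $\E^{\I 2\pi\cdot 0\cdot x^2}\equiv1$ forces $A_{0,l}=\delta_{0,l}$ identically, the admissible coefficient $U_{n,0}$ drops out and the persistence conditions read
\begin{equation*}
\sum_{k_2\neq0}U_{n,k_2}\,A_{k_2,l}\big(I_2^{(l)}\big)=0,\qquad l\in\{-\deg_U^{(2)},\dots,\deg_U^{(2)}\}\setminus\{0\}.
\end{equation*}
This is a finite homogeneous system $M(\mu_2)\,(U_{n,k_2})_{k_2\neq0}=0$ of size $2\deg_U^{(2)}$; its finiteness is exactly what Assumption~\eqref{itm:A2} provides.

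It then remains to show that $M(\mu_2)$ is invertible. At $\mu_2=0$ the flat $x^2$-motion gives $x^2=\theta^2$, hence $A_{k_2,l}=\delta_{k_2,l}$ and $M(0)=\mathrm{Id}$. For part~(a), the perturbative estimates on action--angle coordinates (Lemma~\ref{lem:continuity}) yield $M(\mu_2)=\mathrm{Id}+\mathcal O(\mu_2)$, so $M(\mu_2)$ is invertible once $\tilde\mu_2=\tilde\mu_2(\mathcal C_2,\deg_U^{(2)},e)$ is small; note the relevant tori stay bounded away from the separatrix, since $F_1=p_1^2/2$ keeps the $x^2$-energy $e-F_1\approx e>0$ even as the ratio $-l/n\to0$, which is what makes the smallness threshold uniform in $n$. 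This forces $U_{n,k_2}=0$ for all $k_2\neq0$ and all $n$, i.e.\ $U$ separable. For part~(b), when $V_2$ is analytic the action--angle map, the curve $\mu_2\mapsto I_2^{(l)}(\mu_2)$ realizing the fixed rational ratio (via the implicit function theorem and the non-degeneracy of the frequency map), and hence $\det M(\mu_2)$, all depend analytically on $\mu_2$ (Appendix~\ref{app:AApendulum}). Since $\det M(0)=1\neq0$, this determinant is not identically zero and so vanishes only on a null set $N_n\subset[0,\tilde\mu_2]$; as $n$ ranges over the countable set $\Z\setminus\{0\}$, the union $\bigcup_n N_n$ is still null, and outside it $U$ is separable.

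I expect the main obstacle to lie in the interface of the second and third paragraphs: rigorously justifying the first-order expansion in the $C^{1,1}$ category, extracting the exact solvability (Melnikov) condition on a rational torus, and above all proving that the mixing matrix $M(\mu_2)$ is of full rank with bounds that are \emph{uniform in the harmonic $n$} (for part~a) and genuinely \emph{analytic in $\mu_2$} (for part~b). In particular, guaranteeing that enough rational tori with the prescribed ratios $-l/n$ persist in a single isoenergy manifold and that the perturbative control of the pendulum action--angle coordinates does not degenerate as $n\to\infty$ is the technical heart of the argument.
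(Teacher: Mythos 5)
Your proposal follows essentially the same route as the paper's proof: a first-order expansion of the Hamilton--Jacobi equation under Assumption \eqref{itm:P}, averaging over the preserved rational tori to extract the resonant (Melnikov-type) conditions, reduction to a finite homogeneous linear system of size $2\deg_U^{(2)}$ whose matrix is $\mathrm{Id}+\mathcal{O}(\mu_2)$ by the action--angle perturbation lemma (Lemma~\ref{lem:pertLemm}) -- the paper packages this as full rank of a Gram matrix of the deformed modes in $L^2(\T)$ (Lemma~\ref{lem:continuity}), you as invertibility of the coefficient matrix $\big(A_{k_2,l}\big)$, which is the same computation -- and, for part (b), analyticity of the determinant in $\mu_2$ (Appendix~\ref{app:AApendulum}) together with a countable union of null sets over the fixed first harmonic. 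The one point to tighten is that Assumption \eqref{itm:P} only guarantees preservation of tori whose frequency vector is proportional to an element of $\mathcal{B}_0(\mathcal{S}_U^\perp)$, so the row of your system indexed by $l$ is available only when $(n,l)$ is proportional to an element of $\mathcal{S}_U$; this is harmless because the unknowns $U_{n,k_2}$ corresponding to the missing rows vanish by definition of the spectrum, so the principal subsystem indexed by $\{k_2 : (n,k_2)\in\mathcal{S}_{U,0}\}$ (still of the form $\mathrm{Id}+\mathcal{O}(\mu_2)$) suffices -- which is also how the paper's Gram-matrix argument effectively operates.
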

Therefore, by means of the Maupertuis principle, we infer that integrable deformations in the same conformal class of metrics realizing surfaces of revolution (see Section \ref{subsec:linandquad}) are Liouville metrics.
Finally, Theorem \ref{thm:3} generalizes the above results to Hamiltonian functions, which correspond to arbitrary Liouville metrics by means of the Maupertuis principle. 

 \begin{theorem} \label{thm:3}
Let $H_\epsi$ from \eqref{eq:Heps} satisfy Assumption \eqref{itm:A3} and Assumption \eqref{itm:P} for some energy $e > 0$. Then the following holds: 
\begin{itemize}
	\item[(a)] If $\tilde{\mu}_1 = \tilde{\mu}_1(\mathcal{C}_1, \deg_U^{(1)}, \deg_U^{(2)}, e) >0$ and $\tilde{\mu}_2 = \tilde{\mu}_2(\mathcal{C}_2,  \deg_U^{(1)}, \deg_U^{(2)}, e)>0$ are small enough (see Lemma~\ref{lem:continuity2}), we have that $U$ is separable in a sum of two single-valued functions.
	\item[(b)] If, additionally, $V_2$ is analytic and $\tilde{\mu}_1 = \tilde{\mu}_1(\mathcal{C}_2, \deg_U^{(1)}, \deg_U^{(2)},e) > 0$ is small enough, then $U$ is separable, irrespective of $\tilde{\mu}_2 > 0$, but only for $\mu_2\in [0,\tilde{\mu}_2]$ outside of an exceptional one-dimensional null-set (depending on $\mu_1 \in [0,\tilde{\mu}_1]$). 
	\item[(c)] If both, $V_i$ for $i = 1, 2$, are analytic, then $U$ is separable, irrespective of $\tilde{\mu}_1, \tilde{\mu}_2 > 0$, but only for $(\mu_1, \mu_2)\in [0,\tilde{\mu}_1] \times [0,\tilde{\mu}_2]$ outside of an exceptional two-dimensional null-set. 
\end{itemize} 
 \end{theorem}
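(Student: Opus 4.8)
The plan is to turn the persistence of each rational invariant torus into a \emph{linear solvability (Melnikov-type) condition} on the finitely many Fourier coefficients $(U_{\boldsymbol l})_{\boldsymbol l \in \mathcal{S}_U}$ of the perturbation (finite because $U$ is a trigonometric polynomial under Assumption~\eqref{itm:A3}), and then to show that this linear system is nondegenerate, so that the mixed coefficients $U_{\boldsymbol k}$ with $\boldsymbol k \in \mathcal{S}_{U,0}$ are forced to vanish; by the observation following \eqref{eq:basis0perp} this yields separability. Concretely, I would first use Assumption~\eqref{itm:P} to insert the first-order expansion \eqref{eq:propertyP1} into the perturbed Hamilton--Jacobi equation \eqref{eq:HJE}. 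Since $\nabla_p H_0(x,p) = p$ for the Hamiltonian \eqref{eq:H0}, collecting the $\mathcal{O}(\epsi)$-terms gives
\[
\alpha^{(1)} = \big(\cC + \nabla_x u^{(0)}_\cC\big)\cdot \nabla_x u^{(1)}_\cC + U \, .
\]
The operator $\mathcal{T}_\cC := (\cC + \nabla_x u^{(0)}_\cC)\cdot\nabla_x$ is differentiation along the unperturbed cogeodesic flow restricted to the Lagrangian graph $T_\cC$ of Proposition~\ref{prop:KAM}. On a \emph{rational} torus this flow is periodic, so the range of $\mathcal{T}_\cC$ consists exactly of the functions whose average along each closed orbit vanishes; hence $\mathcal{T}_\cC u^{(1)}_\cC = \alpha^{(1)} - U$ is solvable for a periodic $u^{(1)}_\cC$ if and only if the orbit-averages of $U$ are constant.

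Next I would make this condition explicit in Fourier space. Passing to the separated action--angle coordinates (Theorem~\ref{thm:liouvillearnold}) of the two one-dimensional pendulum factors $h_i(x^i,p_i)=\tfrac12 p_i^2-\mu_i V_i(x^i)$, whose sum is $H_0$, the angle-to-position maps $x^i = x^i(\theta^i;\mu_i)$ are themselves separated. Expanding $U(x(\theta))$ in the angle variables, the orbit-average condition on the $\boldsymbol b$-torus becomes a linear system $\mathcal{L}^{\boldsymbol b}(\mu_1,\mu_2)\,(U_{\boldsymbol l})_{\boldsymbol l} = 0$ whose matrix factorizes through the Fourier data of the maps $\theta^i \mapsto \E^{\I 2\pi l_i x^i(\theta^i;\mu_i)}$. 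At $\mu_1=\mu_2=0$ one has $x^i=\theta^i$, so this matrix is the identity and the solvability conditions annihilate precisely those coefficients $U_{\boldsymbol l}$ with $\boldsymbol b\cdot\boldsymbol l = 0$ --- exactly the flat mechanism already used for Theorem~\ref{thm:1}.

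For part (a) I would treat both $\mu_1,\mu_2$ perturbatively. By the continuity estimates on the action--angle transformation (Lemma~\ref{lem:continuity2}), the entries of $\mathcal{L}^{\boldsymbol b}(\mu_1,\mu_2)$ depend continuously on $(\mu_1,\mu_2)$ and reduce to the full-rank value at the origin; hence for $\mu_i\le\tilde\mu_i$ small enough the relevant block of $\mathcal{L}^{\boldsymbol b}$ stays invertible and forces $U_{\boldsymbol k}=0$. Ranging over all $\boldsymbol b\in\mathcal{B}_0(\mathcal{S}_U^\perp)$ and invoking the observation after \eqref{eq:basis0perp} --- every $\boldsymbol k\in\mathcal{S}_{U,0}$ is orthogonal to some such $\boldsymbol b$ --- gives $\mathcal{S}_{U,0}=\emptyset$, i.e.\ separability. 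For parts (b) and (c), the perturbative smallness in the analytic direction(s) is removed by analytic continuation: when $V_i$ is analytic, the action--angle data, and hence the relevant entries of $\mathcal{L}^{\boldsymbol b}$, extend to \emph{real-analytic} functions of $\mu_i$ (Appendix~\ref{app:AApendulum}). A suitable nonvanishing minor of $\mathcal{L}^{\boldsymbol b}$ is then real-analytic in the analytic variable(s) and, by part (a), not identically zero near the origin; since the zero set of a nonzero real-analytic function is a (Lebesgue) null-set, full rank persists for $\mu_2$ (resp.\ $(\mu_1,\mu_2)$) outside a one-dimensional (resp.\ two-dimensional) null-set, which gives (b) and (c).

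The hard part will be establishing the \emph{nondegeneracy} (full rank) of the Melnikov system uniformly, that is, proving that the linear map sending $(U_{\boldsymbol l})_{\boldsymbol l\in\mathcal{S}_U}$ to the collection of orbit-average conditions is injective on the relevant block. This requires the first-order expansion \eqref{eq:propertyP1} to be controlled in $C^{1,1}$ together with quantitative, and in the analytic case \emph{uniform}, estimates on the separable action--angle transformation $x^i(\theta^i;\mu_i)$ as $\mu_i$ ranges over $[0,\tilde\mu_i]$. The most delicate regime is near the pendulum separatrix, where the Liouville foliation of $T_e$ is singular (cf.\ Figure~\ref{fig:1}) and the action--angle coordinates degenerate; controlling the analyticity and size of the Fourier data of $x^i(\theta^i;\mu_i)$ there is what makes the analytic-continuation step in (b) and (c) genuinely nontrivial.
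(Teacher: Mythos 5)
Your proposal is correct and follows essentially the same route as the paper's proof: expand the Hamilton--Jacobi equation to first order via Assumption \eqref{itm:P}, average along the preserved rational tori to obtain a Melnikov-type linear system for the Fourier coefficients expressed in the separated action--angle variables, establish full rank perturbatively for small $\mu_1,\mu_2$ by continuity to the identity at $\mu_1=\mu_2=0$ (the paper packages this as the Gram-matrix Lemma~\ref{lem:continuity2}, with the continuity input supplied by Lemma~\ref{lem:pertLemm}), and remove the smallness restriction in the analytic direction(s) because the determinant is analytic in $\mu_2$ (resp.\ jointly in $(\mu_1,\mu_2)$, via Hartogs) and nonvanishing near the origin, so its zero set is a null set. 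One minor remark: your concern about degeneration near the pendulum separatrix is moot here, since all tori invoked by Assumption \eqref{itm:P} satisfy $|c_i| > \gamma_i + \sqrt{\mu_i}\,\mathfrak{c}(V_i)$ and thus remain uniformly inside the rotation regime, which is exactly what keeps the perturbation and analyticity estimates of Appendices~\ref{app:pertLemm} and~\ref{app:AApendulum} uniform.
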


Our results formulated in Theorem \ref{thm:1}, Theorem \ref{thm:2}, and Theorem \ref{thm:3} can each be viewed as a verification of a special case of the following conjecture, saying that \emph{`(nice) integrable deformations of Liouville metrics are Liouville metrics'}. 
\\[3mm]
\textbf{Conjecture: Deformational rigidity of Liouville metrics.} \\*{\it Let $g$ be a Liouville metric on $\T^2$ and let $(g_t)_{t \in [0,1]}$ with $g_0 = g$ be a deformation that preserves all rational invariant tori (except finitely many). Then $g_t$ is a Liouville metric for all $t \in [0,1]$. }
\\[1mm]
This conjecture is in strong analogy to the perturbative Birkhoff conjecture for integrable billiards, which is discussed in Section \ref{subsec:Billiard} below.

\section{Integrable metrics on the two-dimensional torus} \label{sec:integrablemetricsonT2}
As pointed out in Section \ref{subsec:intriemmet}, integrability of metrics on one-dimensional manifolds is not questionable and the first non-trivial examples occur whenever $M$ has dimension two. Recall from Definition \ref{def:integrable} that integrability of metrics on two-dimensional manifolds requires the existence of only one additional first integral (beside the Hamiltonian). 
\subsection{Topological obstructions}
The following Theorem due to Kozlov \cite{kozlov1979, kozlov1988} categorizes two-dimensional compact manifolds regarding the possibility to endow them with an integrable metric (see Question~\eqref{itm:Q1}). 
\begin{thm} {\rm (Kozlov \cite{kozlov1979, kozlov1988})}\label{thm:kozlovtopology} \\
Let $M$ be a two-dimensional compact and real-analytic manifold that is endowed with a real-analytic Riemannian metric $g$. If the Euler characteristic $\chi_M$ of $M$ is negative, then there exists no other non-trivial real-analytic first integral. 
\end{thm}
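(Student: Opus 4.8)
The plan is to argue by contradiction: assume there exists a non-trivial real-analytic first integral $F$ that is functionally independent of the geodesic Hamiltonian $H = \frac12 |p|_g^2$, and derive a contradiction from $\chi_M < 0$ via a topological index argument on the unit cotangent bundle $Q = \{H = \frac12\}$, which carries the circle-bundle structure $\pi : Q \to M$ whose Euler number is $\chi_M \neq 0$.

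First I would reduce to the case where $F$ is homogeneous in the momentum variable $p$. Since $H$ is homogeneous of degree $2$ in $p$ and the operation $\{H, \cdot\}$ raises the $p$-degree by exactly one, decomposing $F = \sum_m F_m$ into its homogeneous-in-$p$ components shows that each graded piece satisfies $\{H, F_m\} = 0$ separately, so each $F_m$ is itself a first integral. As the only homogeneous functions of $H$ alone are the powers $H^{m/2}$, functional independence of $F$ forces some component $F_m$ with $m \geq 1$ to be a non-trivial integral independent of $H$, and I may assume $F = F_m$. The decisive consequence is that the restriction $f_x := F|_{S^1_x}$ to the fiber circle $S^1_x = \{ p \in T_x^* M : |p|_g = 1 \}$ is, in an angular coordinate $\phi$ adapted to an orthonormal frame, a trigonometric polynomial of degree at most $m$; by analyticity and independence it is non-constant for $x$ in a dense open set $M_0 \subseteq M$, and there has at most $2m$ critical points.

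Next I would use these distinguished critical directions to build a multi-valued line field on $M_0$. For $x \in M_0$ the directions in $S^1_x$ at which $f_x$ attains its maximum form a finite nonempty set, singling out distinguished unit covectors, hence (via the metric) distinguished tangent directions at $x$. This yields a finitely-valued line field on $M_0$, whose singular locus — points where the maximizing direction is non-unique or where critical points of $f_x$ collide — is the zero set of analytic discriminant-type expressions, hence a proper analytic subvariety. Finally I would invoke the Poincaré–Hopf theorem, whose index sum for a line field on a closed surface equals $\chi_M$: a globally defined, suitably branched line field forces a balance with $\chi_M$, while the singularities produced by the collision and bifurcation of critical points of the trigonometric polynomial $f_x$ must contribute indices whose total cannot equal a negative number.

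I expect this last index computation to be the main obstacle, since it is precisely the step that converts the genus constraint into the nonexistence of $F$: one must carefully account for the branching of the line field over the collision locus and pin down the sign of each local contribution, using real-analyticity to bound the complexity of the bifurcations and force the indices into a definite pattern, and using $\chi_M < 0$ to rule out the escape routes available on the sphere and torus (where $\chi_M \geq 0$ permits the relevant fields to exist). As an alternative, more modern route, one could instead combine the fact that analytic integrability of the geodesic flow forces vanishing topological entropy with the fact that any metric on a surface of genus $\geq 2$ has positive topological entropy (via the exponential volume growth of its universal cover and Manning's inequality), yielding the same contradiction without the explicit index bookkeeping.
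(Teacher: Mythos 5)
The paper itself does not prove this statement: Theorem \ref{thm:kozlovtopology} is quoted as a classical result with citations to \cite{kozlov1979, kozlov1988}, so your attempt has to be judged against the standard arguments (Kozlov's original one, or the now-customary reduction to the polynomial case, i.e.\ to the theorem of Kolokoltsov \cite{kolokoltsov} stated immediately after it in the paper). Your first step is exactly that standard reduction and is correct: since $\{H,\cdot\}$ raises the $p$-degree by one, each homogeneous Maclaurin component $F_m$ of $F$ is itself a first integral, polynomial in the momenta, and if every $F_m$ were functionally dependent on $H$ then $F$ would be a function of $H$; hence the analytic statement reduces to the polynomial one.

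The genuine gap is the index argument, and it sits exactly where the content of the theorem lies. There are two problems. First, the field you build --- the directions maximizing $f_x = F|_{S^1_x}$ --- is in general not a finitely-branched line field with isolated singularities: when two local maxima of $f_x$ exchange the role of global maximum, the maximizing direction jumps, so your field is discontinuous along a one-dimensional ``Maxwell set'', and Poincar\'e--Hopf does not apply to it at all. Second, and more fundamentally, nothing in your setup controls the signs of the indices: singular line fields on surfaces generically have negative-index singularities (index $-1/2$, say), so the claim that ``the total cannot equal a negative number'' is not forced --- as you yourself concede. The missing mechanism is that one must use the \emph{top Fourier harmonic} of $f_x$ rather than its maximizer, and feed in $\{H,F\}=0$: writing the metric as $\lambda(z)\,|dz|^2$ in isothermal coordinates (pass to the orientation double cover in the non-orientable case) and $F=\sum_k a_k(z,\bar z)\,p_z^k\,\bar p_z^{\,n-k}$, the coefficient of the top power $p_z^{n+1}$ in $\{H,F\}=0$ reads $\partial_{\bar z}a_n=0$, i.e.\ $a_n$ is a holomorphic section of the $n$-th tensor power of the holomorphic tangent bundle. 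That bundle has degree $n\chi_M<0$, so $a_n\equiv 0$; then $a_0=\overline{a_n}\equiv 0$ as well, $F$ factors as $H\cdot G$ with $G$ a polynomial integral of degree $n-2$, and induction shows $F$ is dependent on $H$ --- the desired contradiction. (Equivalently, in your language: the $n$-valued direction field of the top harmonic is continuous away from the zeros of the holomorphic coefficient $a_n$ and has index $+k/n>0$ at a zero of order $k$, so Poincar\'e--Hopf would force $\chi_M\ge 0$.) Finally, your entropy fallback cannot be invoked as a known lemma: ``integrability implies $h_{\mathrm{top}}=0$'' is false in the smooth category --- see the Bolsinov--Taimanov example \cite{BolsTaim} cited in the paper --- and in the real-analytic category it is a deep theorem (Taimanov) of essentially the same nature as the statement you are trying to prove; only the other half (genus $\ge 2$ implies positive entropy, via Dinaburg/Manning) comes for free.
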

A result similar to Theorem \ref{thm:kozlovtopology} holds for polynomially integrable geodesic flows. 
\begin{thm}{\rm (Kolokoltsov \cite{kolokoltsov})} \\
There exist no polynomially integrable geodesic flow on a closed two-dimensional Riemannian manifold $M$ with negative Euler characteristic $\chi_M$. 
\end{thm}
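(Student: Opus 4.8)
The plan is to translate the existence of a polynomial first integral into the existence of a nonzero holomorphic tensor field on $M$ regarded as a compact Riemann surface, and then to annihilate that field using the negativity of $\chi_M$. First I would dispose of two preliminary reductions. If $M$ is non-orientable, I pass to its orientable double cover $\widetilde M$, which still satisfies $\chi_{\widetilde M} = 2\chi_M < 0$ and to which both the metric and any functionally independent pair of polynomial integrals lift (a covering is a local diffeomorphism), so that it suffices to treat the orientable case. An oriented Riemannian surface carries a canonical conformal structure, turning $(M,g)$ into a compact Riemann surface on which I may work in local isothermal coordinates $z = x^1 + \I x^2$ with $\D s^2 = \Lambda\,|\D z|^2$ and complex momentum $w = p_1 + \I p_2$, so that $H = \tfrac{1}{2\Lambda}\, w \bar w$. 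Second, writing a polynomial integral as a sum $F = \sum_m F_m$ of its $p$-homogeneous components and noting that $\{F_m, H\}$ is homogeneous of $p$-degree $m+1$ (since $H$ has degree $2$), the relation $\{F,H\} = 0$ forces $\{F_m, H\} = 0$ for each $m$; hence I may assume $F$ is homogeneous of some degree $n \ge 1$.

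The key step is to encode $F$ geometrically. A homogeneous degree-$n$ integral is the same datum as a symmetric Killing $n$-tensor $a$, i.e.\ $F = a^{i_1 \cdots i_n} p_{i_1} \cdots p_{i_n}$ with $\nabla_{(i_0} a_{i_1 \cdots i_n)} = 0$. In isothermal coordinates the trace-free part of $a$ is captured by the single complex coefficient $R := a^{z \cdots z}$ (all indices of holomorphic type), which is naturally a section of $K^{-n}$, where $K$ denotes the canonical bundle. The crucial computation, which I expect to be the \emph{main obstacle}, is that the trace-free projection of the Killing equation collapses to the single Cauchy--Riemann equation $\partial_{\bar z} R = 0$: one must insert the Levi--Civita connection of the conformal metric and verify that the conformal factor $\Lambda$ drops out of the top-order part of the equation, so that $R\,(\partial_z)^{\otimes n}$ is a genuine global holomorphic section of $K^{-n}$. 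Regularity is not an issue here: even if $g$ is merely $C^2$ rather than analytic, the equation $\partial_{\bar z} R = 0$ renders $R$ holomorphic by elliptic regularity, which is exactly what makes the polynomial hypothesis (rather than analyticity) suffice.

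The topological contradiction is then immediate and is the only place where the sign of $\chi_M$ enters. Since $\deg K = -\chi_M$, the bundle $K^{-n}$ has degree $n \chi_M < 0$, and a holomorphic line bundle of negative degree admits no nonzero holomorphic section (the divisor of zeros of such a section would have non-negative degree). Hence $R \equiv 0$, which means that the trace-free part of $a$ vanishes and $a$ is pure trace; contracting the trace with $g^{ij} p_i p_j = 2H$ yields $F = 2H\, \widetilde F$ with $\widetilde F$ homogeneous of degree $n-2$, and dividing the identity $0 = \{F,H\} = 2H\{\widetilde F, H\}$ by the generically nonzero $H$ gives $\{\widetilde F, H\} = 0$. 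Iterating this descent down to degree $0$ or $1$ — where at degree $1$ a linear integral is a Killing vector field, equivalently a holomorphic section of $K^{-1}$, again of negative degree $\chi_M < 0$ and hence zero, while at degree $0$ only constants remain — I conclude that $F$ is a polynomial in $H$ alone. Thus the geodesic flow admits no first integral polynomial in the momenta that is functionally independent of $H$, which is precisely the asserted non-existence.
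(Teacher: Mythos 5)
The paper itself contains no proof of this theorem: it is quoted as a known result from \cite{kolokoltsov}, so your argument can only be measured against the classical proof — and it essentially \emph{is} the classical proof, correctly reproduced. Your chain of reductions (non-orientable case via the orientable double cover, restriction to $p$-homogeneous $F$ by splitting $\{F,H\}=0$ into homogeneous components, identification of a homogeneous integral with a symmetric Killing $n$-tensor) is sound, and the step you flag as the main obstacle is in fact a short computation: writing $H = 2p_z p_{\bar z}/\Lambda$ and $F = \sum_{k} b_k\, p_z^k p_{\bar z}^{n-k}$, the coefficient of $p_z^{n+1}$ in $\{F,H\}$ equals $(2/\Lambda)\,\partial_{\bar z} b_n$, so $\{F,H\}=0$ forces $\partial_{\bar z}b_n=0$ exactly, the conformal factor entering only as a harmless prefactor. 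Since $b_n = a^{z\cdots z}$ picks up the factor $(w'(z))^{n}$ under a holomorphic change of coordinates, $b_n\,(\partial_z)^{\otimes n}$ is a globally defined holomorphic section of $K^{-n}$, whose degree is $n\chi_M<0$, hence it vanishes identically; your descent $F = 2H\widetilde F$, together with the vanishing of Killing vector fields (holomorphic sections of $K^{-1}$, again of negative degree) and the constancy of degree-zero integrals, then shows that any polynomial integral is a polynomial in $H$ alone, so it is never functionally independent of $H$, which is the assertion. Note also that this complex-analytic route is precisely the machinery the paper alludes to elsewhere: the holomorphic function $R$ solving \eqref{eq:kolokoltsovpde} is the quadratic-integral instance of your section $R$ (on the torus $K^{-2}$ is trivial, so the section becomes a function). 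In short, your proposal is faithful to Kolokoltsov's approach rather than an alternative to it, and I see no gap in it.
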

Recall that any two-dimensional compact manifold $M$ can be represented either as the sphere with handles or the sphere with Möbius strips, in the orientable and non-orientable case, respectively. The Euler characteristic $\chi_M$ can be computed as
\begin{equation*}
\chi_M = 2-2g \qquad \text{resp.} \qquad \chi_M = 2-m\,,
\end{equation*}
where $g$ is the number of handles (the genus) and $m$ is the number of Möbius strips. In order to have integrability, the above theorem imposes the condition $\chi_M\ge 0$ on $M$ and we thus know that the number of handles is at most $1$ and the number of Möbius strips is not greater than $2$. Therefore, any real-analytic two-dimensional compact Riemannian manifold $(M,g)$ with real-analytic (or polynomial) additional integral is either the sphere $\Sph^2$ or the torus $\T^2$ (in the orientable case), or the projective plane $P\R^2$ or the Klein bottle $\mathbb{K}^2$ (in the non-orientable case).\footnote{In \cite{BolsTaim}, Bolsinov and Taimanov give a striking example of a real-analytic Riemannian manifold of dimension three, whose geodesic flow has the peculiar property, that it is smoothly (but not analytically) integrable although it has positive topological entropy \cite{topentropy}. The problem of proving (non-)existence of smoothly (but not analytically) integrable geodesic flows on compact surfaces of genus $g > 1$ is widely open (see \cite{probl1}). }

In this work, we focus on integrable metrics on the torus $\T^2$ and refer to works by Bolsinov, Fomenko, Matveev, Kolokoltsov and others \cite{bolsmatfom1998, fomenkomatveev1988, kolokoltsov, Nguyenpolyaselia1993} for studies on integrable metrics on the sphere, the projective plane, and the Klein bottle. See \cite{probl1, probl2} for recent surveys on open problems and questions concerning geodesics and integrability of finite-dimensional systems in general.
\subsection{Linearly and quadratically integrable metrics} \label{subsec:linandquad}
The first non-trivial class of integrable metrics on the torus $\T^2$ are {\it surfaces of revolution}. Consider a two-dimensional surface $M \subset \R^3$ given by the equation $r = r(z)$ in standard cylindrical coordinates $(r, \varphi, z) \in (0,\infty) \times [0,2\pi) \times \R$. As local coordinates on $M$ we take $z$ and $\varphi$. In case that $r(z)$ is $L$-periodic and we identify $0$ and $L$, then $M$ is diffeomorphic to the torus $\T^2$ and the Riemannian metric induced on $M$ by the Euclidean metric on $\R^3$ has line element
\begin{equation} \label{eq:surfaceofrevolution}
\D s^2 = (1+r'(z)^2) \D z^2 + r(z)^2 \D \varphi^2\,.
\end{equation}
Since the corresponding Hamiltonian function \eqref{eq:Hamiltonian1} is independent of $\varphi$, its associated momentum variable $p_\varphi$ is an additional first integral and thus the metric \eqref{eq:surfaceofrevolution} is integrable. Note that the additional first integral is linear in the momentum variables. 

As discussed earlier, a Riemannian metric $g$ on $\T^2$ is called a \textit{Liouville metric}, whenever its line element can be written as 
\begin{equation*} \label{eq:Liouvillemetric}
\D s^2 = \big(f_1(x^1)+ f_2(x^2)\big) \big((\D x^1)^2 + (\D x^2)^2\big)
\end{equation*}
in appropriate global coordinates $(x^1, x^2)$ and where $f_1$ and $f_2$ are smooth positive periodic functions. The corresponding Hamiltonian function \eqref{eq:Hamiltonian1} is given by 
\begin{equation*}
H(x^1,x^2,p_1,p_2) = \frac{p_1^2 + p_1^2}{2\, (f_1(x^1)+f_2(x^2))}
\end{equation*}
and an additional first integral can easily be obtained as
\begin{equation*}
F(x^1,x^2,p_1,p_2) = p_1^2 - f_1(x^1)\, H(x^1,x^2,p_1,p_2)\,.
\end{equation*}
Therefore, clearly, also Liouville metrics are integrable. Note that the additional first integral $F$ is quadratic in the momentum variables. 
It is not hard to see that a surface of revolution is just a particular case of a Liouville metric, where one can choose, e.g., $f_2 \equiv 0$, by employing a simple change of variables. 

The following proposition also provides the converse to the observation that surfaces of revolution and Liouville metrics admit additional first integrals which are linear and quadratic in the momenta, respectively. It collects several statements that have been proven in early works by Dini \cite{dini}, Darboux \cite{darboux}, and Birkhoff \cite{birkhoff1927}, and were further developed by Babenko and Nekhoroshev \cite{babenkoneko1995}, Kiyohara \cite{kiyohara1991}, Kolokoltsov \cite{kolokoltsov}, and others.
\begin{prop} {\rm (Linear and quadratic first integrals \cite{dini, darboux, birkhoff1927,babenkoneko1995,kiyohara1991,kolokoltsov})} \label{prop:linearandquadratic} 
\begin{itemize}
\item[(a)] Let the metric $g$ on $\mathbb{T}^2$ possess an additional first integral $F$ that is linear in the momenta. Then there exist global periodic coordinates $(x^1,x^2)$ on the torus such that the line element of $g$ takes the form
\begin{equation*}
\D s^2 = f(x^1)\, \big(a\, (\D x^1)^2 + c \, \D x^1\, \D x^2 + b\,(\D x^2)^2\big)\,,
\end{equation*} 
where $f$ is some positive periodic function and $a,b,c \in \R$ such that the quadratic form $aa\, (\D x^1)^2 + c \, \D x^1\, \D x^2 + b\,(\D x^2)^2$ is positive definite. 

Conversely, any such metric on the torus $\T^2$ admits an additional first integral that is linear in the momentum variables. 

In case a linear in momenta $F$ exists locally near a point $q \in \T^2$, then there exists local coordinates $(x^1,x^2)$ near $q$ such that the line element of $g$ reads
\begin{equation*}
\D s^2 = f(x^1) \, \big((\D x^1 )^2+(\D x^2)^2\big)\,.
\end{equation*} 
\item[(b)] A metric $g$ on $\T^2$ possess an additional first integral $F$ that is quadratic in the momenta if and only if there exists a finite-sheeted covering $\pi :\widetilde{\T}^2\to \T^2$ by another torus, such that the lifted metric $\widetilde{g} = \pi^* g$ is globally Liouville, i.e.~there exist global periodic coordinates $(x^1,x^2)$ on $\widetilde{\T}^2$ and smooth positive periodic functions $f_1$ and $f_2$ such that the line element of $\widetilde{g}$ takes the form
\begin{equation*}
\D \widetilde{s}^2 = \big(f_1(x^1)+ f_2(x^2)\big) \,  \big((\D x^1)^2 + (\D x^2)^2\big)\,.
\end{equation*}
There exist Riemannian metrics $g$ on $\T^2$ which are not globally Liouville but have an additional first integral that is quadratic in the momentum variables. 

In case a quadratic in momenta $F$ exists locally near a point $q \in \T^2$, then there exist local coordinates $(x^1,x^2)$ near $q$ such that the line element of $g$ reads
\begin{equation*}
\D {s}^2 = \big(f_1(x^1)+ f_2(x^2)\big) \, \big((\D x^1)^2 + (\D x^2)^2\big)\,.
\end{equation*}
\end{itemize}
\end{prop}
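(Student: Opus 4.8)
The plan is to reduce both assertions to the classical dictionary between first integrals that are polynomial in the momenta and symmetric Killing tensor fields of $g$, and then to read off the normal form of the metric from coordinates adapted to the relevant Killing object. The linear case reduces to Killing vector fields and is essentially an isometry argument; the quadratic case reduces to Killing $2$-tensors and requires the theory of holomorphic quadratic differentials, where the only real difficulty is global.

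\textbf{Part (a).} For a linear integral $F = \sum_i a^i(x)\,p_i$ I would first expand $\{H,F\}$, which is cubic in $p$, and collect coefficients to show that $\{H,F\}=0$ is equivalent to the Killing equation $\nabla_i a_j + \nabla_j a_i = 0$ for the vector field $V = a^i\,\partial_{x^i}$; thus a linear integral is the same datum as a Killing field, i.e.\ an infinitesimal isometry. On $\T^2$ one then uses that a nontrivial Killing field is nowhere vanishing: its zeros are isolated (a Killing field is determined by its $1$-jet at a point) and each carries index $+1$ (the linearization is a nonzero infinitesimal rotation), so a nonempty zero set would force a positive index sum, contradicting $\chi(\T^2)=0$. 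Choosing coordinates adapted to the isometric flow so that $V = \partial/\partial x^2$ forces $\mathcal{L}_V g = 0$, hence all coefficients of $g$ to be independent of $x^2$, giving $\D s^2 = g_{11}(x^1)(\D x^1)^2 + 2 g_{12}(x^1)\,\D x^1\D x^2 + g_{22}(x^1)(\D x^2)^2$. A shear $x^2 \mapsto x^2 - \int (g_{12}/g_{22})\,\D x^1$ removes the cross term, and a reparametrization of $x^1$ equalizes the two diagonal coefficients, producing the local form $f(x^1)\big((\D x^1)^2 + (\D x^2)^2\big)$. Globally these normalizing changes must respect the period lattice, and what survives is precisely a fixed constant positive-definite conformal shape, yielding $f(x^1)\big(a(\D x^1)^2 + c\,\D x^1\D x^2 + b(\D x^2)^2\big)$; the converse is immediate, since in such coordinates $g$ is $x^2$-independent and the momentum conjugate to $x^2$ is conserved.

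\textbf{Part (b).} A quadratic integral $F = \sum_{ij} a^{ij}(x)\,p_ip_j$ corresponds, via $\{H,F\}=0$, to a symmetric Killing $2$-tensor, $\nabla_{(i}a_{jk)}=0$. Passing to global isothermal coordinates $z = x^1 + \I x^2$, in which $\D s^2 = \Lambda\,|\D z|^2$, I would split $a$ into its trace part (a multiple of $g$, corresponding to the trivial integral $\propto H$) and its trace-free part; the top-order Killing equation for the latter is conformally invariant and amounts to the Cauchy--Riemann equations, so the associated quadratic differential $R\,\D z^2$ built from the trace-free part of $a$ is holomorphic. On the genus-one torus a nonzero holomorphic quadratic differential has degree $4\cdot 1 - 4 = 0$ and is therefore nowhere vanishing, i.e.\ $R$ is a constant; introducing $w = u + \I v := \sqrt{R}\,z$, the remaining trace part of the Killing equation integrates to the statement that the conformal factor separates, $\Lambda = f_1(u) + f_2(v)$, which is exactly the Liouville form in the coordinates $(u,v)$. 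The local versions of (a) and (b) are the unbranched special cases of the above, where no global matching is required and one reads off $f(x^1)\big((\D x^1)^2+(\D x^2)^2\big)$, respectively $\big(f_1(x^1)+f_2(x^2)\big)\big((\D x^1)^2+(\D x^2)^2\big)$, directly.

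\textbf{The main obstacle.} The essential difficulty --- and the reason the statement in (b) is phrased up to a finite cover --- is to reconcile these locally defined Liouville coordinates with the global periodic structure of $\T^2$. The coordinates $(u,v)$ produced by $w$ are honest angle coordinates, and $f_1,f_2$ honest single-valued periodic functions, only when the period lattice is compatible with the rectangular net determined by $R$; in general this holds only after passing to a suitable finite-sheeted covering $\pi : \widetilde{\T}^2 \to \T^2$, on which $\pi^* g$ becomes globally Liouville. This is simultaneously the source of the covering in the statement and of the existence of metrics that carry a quadratic integral yet are \emph{not} globally Liouville, namely those for which the minimal such cover is nontrivial. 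Establishing the holomorphicity of $R$ and the local normal forms is a direct computation; controlling the global matching --- the co-orientability of the two eigenfoliations of the Killing tensor and the degree of the required cover --- is where the real work lies, and is precisely the content of the refinements of Babenko--Nekhoroshev, Kiyohara and Kolokoltsov cited in the statement.
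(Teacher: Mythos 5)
The paper itself offers no proof of this proposition: it is a survey statement, quoted with citations to Dini, Darboux, Birkhoff, Babenko--Nekhoroshev, Kiyohara, and Kolokoltsov, so there is no in-paper argument to compare yours against. Your outline does follow the standard route underlying those references: for (a), the identification of linear integrals with Killing vector fields, the Poincar\'e--Hopf argument showing that a nontrivial Killing field on $\T^2$ is nowhere vanishing, and the straightening of its flow; for (b), the identification of quadratic integrals with Killing $2$-tensors whose trace-free part, in global isothermal coordinates, defines a holomorphic quadratic differential, hence a constant multiple of $\D z^2$ on a genus-one surface, after which the trace part of the Killing equation separates the conformal factor. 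This is exactly the machinery the paper alludes to around eq.~\eqref{eq:kolokoltsovpde}, so your road map is the right one.

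As a standalone proof, however, the proposal has genuine gaps precisely at the points where the statement is nontrivial. In (a), the global normal form is asserted rather than derived: you must show that the isometric flow can be straightened compatibly with the period lattice (note that the closure of the one-parameter isometry group in the compact isometry group is a torus subgroup, and the case of dense, non-closed orbits forces $f$ to be constant and needs separate treatment), and that the obstruction to removing the shear and equalizing the diagonal terms globally is captured by exactly a constant positive-definite form $a,b,c$; your shear $x^2 \mapsto x^2 - \int (g_{12}/g_{22})\,\D x^1$ generally does not descend to the torus, and quantifying what survives is the actual content. In (b), the existence of the finite-sheeted cover on which $(u,v)$ become honest periodic coordinates and $f_1,f_2$ single-valued is the heart of the theorem, and you explicitly defer it to the very references being cited; likewise, the existence of metrics with a quadratic integral that are \emph{not} globally Liouville is asserted (``those for which the minimal cover is nontrivial'') but never established --- one must exhibit such a metric, e.g.~a suitable quotient of a Liouville torus by isometries that do not preserve the Liouville coordinate net, as in Matveev's classification. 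So what you have written is a correct and well-organized sketch of the classical proofs, consistent with the literature the paper points to, but it is not yet a proof: each place where you invoke ``the real work'' is a place where the argument still has to be supplied.
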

This classical result completely characterizes the integrable metrics $g$ on $\T^2$ that admit an additional first integral that is linear or quadratic in the momentum variables. Similar results hold for Riemannian metrics on general two-dimensional manifolds~\cite{bolsmatfom1998, fomenkomatveev1988, kolokoltsov, Nguyenpolyaselia1993}. 
\subsection{Polynomially integrable metrics of higher degree}
In the case of a sphere $\Sph^2$, one can easily construct examples of metrics which admit an additional first integral that is cubic resp.~quartic in the momentum variables. Using the Maupertuis principle, these can be obtained from the metrics constructed from Goryachev-Chaplygin \cite{goryachev,chaplygin} and Kovaleskaya \cite{kovaleskaya} in the situation of the dynamics of a rigid body. Therefore, let $h>1$ be large enough (cf.~\eqref{eq:transformedmetric}) and define the metrics $g_3$ and $g_4$ on $\R^3$ via their respective line elements
\begin{equation*}
 \D s^2_3 = \frac{h-x^1}{4} \, \frac{(\D x^1)^2 + (\D x^2)^2 + 4 (\D x^3)^2}{(x^1)^2 + (x^2)^2 + (x^3)^2/4}\,, \quad   \D s^2_4 = \frac{h-x^1}{2} \, \frac{(\D x^1)^2 + (\D x^2)^2 + 2 (\D x^3)^2}{(x^1)^2 + (x^2)^2 + (x^3)^2/2}\,.
\end{equation*}
By restriction of $g_3$ and $g_4$ to the unit sphere $\Sph^2 \subset \R^3$, the resulting metrics admit an additional first integral that is cubic resp.~quartic in the momentum variables. It was shown by Bolsinov, Fomenko and Kozlov \cite{bolsinovfomenko1994, bolsfomkoz1995} that these cannot be reduced to first integrals that are polynomially in the momentum variables of a lower degree, i.e.~they are not linearly or quadratically integrable. Since all attempts to construct such examples for the case of the torus have failed so far, the following folklore conjecture emerged.
\\[3mm]
\textbf{Folklore Conjecture.} {\it Liouville metrics are the only integrable metrics on $\T^2$.}
\\[3mm]
In this general form, there is strong indication for conjecture being false, as to be shown below (see Theorem \ref{thm:corsikaloshin}). We will, however, provide existing results, which indicate that a certain weaker version of this conjecture, also formulated below, is indeed true. 

It was proven by Korn and Lichtenstein \cite{korn, lichtenstein} that on every point on a two-dimensional Riemannian manifold $(M,g)$ there exist \emph{locally} isothermal coordinates, that is, locally, the line element takes the form
\begin{equation} \label{eq:isothermal}
\D s^2 = \lambda(x^1,x^2) \, \big((\D x^1)^2 + (\D x^2)^2\big)\,,
\end{equation}
where $\lambda$ is some smooth positive function. In the case of a torus, it can be shown (by virtue of the uniformization theorem) that there exist \emph{global} isothermal coordinates (not necessarily periodic), so the metric $g$ is conformal equivalent to the Euclidean metric $g_{\mathrm{eucl}}$. In particular, assuming that $(x^1,x^2)$ are just the angular coordinates on the torus $\T^2$ and in the special case of $\lambda$ being a trigonometric polynomial,\footnote{This means that the spectrum $\mathcal{S}_\lambda$ defined in \eqref{eq:spectrum} is bounded. } we have the following result due to Denisova and Kozlov. 
\begin{thm}{\rm (Denisova-Kozlov \cite{denisovakozlov1})} \label{thm:dk1} \\
Let $\lambda$ from \eqref{eq:isothermal} be a trigonometric polynomial and assume that the geodesic flow on $\T^2$ is polynomially integrable. Then there exists an additional polynomial first integral of degree at most two. 
\end{thm}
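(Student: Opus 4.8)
The plan is to reduce the classification of polynomial integrals to a rigid statement about a single momentum-homogeneous integral, and then to exploit the interplay between the conformal structure on $\T^2$ and the finiteness of the spectrum of $\lambda$. The geodesic Hamiltonian $H = (p_1^2+p_2^2)/(2\lambda)$ is homogeneous of degree two in the momenta, so $\{H,\cdot\}$ raises the momentum-degree by exactly one. Hence, writing a polynomial integral $F=\sum_k F_k$ as a sum of its momentum-homogeneous parts, each $F_k$ is separately a first integral, and it suffices to analyze one homogeneous integral of some degree $N$; I would take $N$ minimal among nonconstant homogeneous integrals that are not polynomial combinations of $H$ with integrals of lower degree. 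Introducing $z = x^1 + \I x^2$ and the complex momentum $p = p_1 - \I p_2$ (so that $p_1^2+p_2^2 = p\bar p$), I would expand
$$F = \sum_{k=0}^{N} a_k(z,\bar z)\, p^{\,N-k}\,\bar p^{\,k}, \qquad \overline{a_k}=a_{N-k},$$
and rewrite $\{H,F\}=0$ as a triangular hierarchy of first-order linear equations coupling $\partial_{\bar z}a_k$ to $(\partial_z\lambda)\,a_{k+1}$ and $(\partial_{\bar z}\lambda)\,a_{k-1}$.

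The top equation of this hierarchy states, after weighting by a suitable power of $\lambda$, that the leading coefficient is a holomorphic $N$-differential for the conformal structure defined by $\lambda$. Since the canonical bundle of $\T^2$ is trivial, the only global holomorphic $N$-differentials are constant multiples of $(\D z)^N$, so the leading (trace-free) symbol of $F$ equals $c\,(\D z)^N$ for a constant $c$. If $c=0$ the effective degree drops and I induct on $N$; thus I may assume $c\neq 0$, which pins the top symbol to be translation-invariant and selects a preferred direction in the momentum plane.

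It is at the next stage that the trigonometric-polynomial hypothesis becomes essential. Passing to Fourier series in $x$, the descent equations relate each Fourier mode of $a_k$ to modes of $a_{k\pm 1}$ shifted by the finitely many frequencies in $\mathcal{S}_\lambda$ (cf.~\eqref{eq:spectrum}). Starting from the constant zero-mode leading coefficient and propagating down the finite ladder $k=0,\dots,N$, the plan is to show that the Fourier support of every $a_k$ is confined to a finite subset of a one-dimensional sublattice aligned with the direction distinguished by $c\,(\D z)^N$. Such a one-dimensional support encodes a translational (Killing) symmetry: it forces $\lambda$ to be invariant in one lattice direction, i.e.~to define a surface of revolution with a linear integral, or else places it in the Liouville class with a quadratic integral; in either case one obtains a first integral of degree at most two. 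I expect the genuine difficulty to lie entirely in this Fourier-cascade step: one must prove that the induced linear system on the admissible coefficients closes and is consistent only for the low-degree (one-dimensional-support) solutions, and this is precisely where the finiteness of $\mathcal{S}_\lambda$ and the discreteness of the dual lattice of $\T^2$ are indispensable, since without them the descent could propagate to ever higher frequencies and sustain a genuinely higher-degree integral.
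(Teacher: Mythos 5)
You should first be aware that the paper does not prove Theorem \ref{thm:dk1} at all: it is quoted in the survey Section \ref{sec:integrablemetricsonT2} with attribution to Denisova--Kozlov \cite{denisovakozlov1}, so your proposal can only be measured against the argument in that reference. Your setup is the classical Birkhoff--Kolokoltsov reduction used throughout this literature and is correct as far as it goes: since $H$ is quadratic-homogeneous in the momenta, $\{H,\cdot\}$ raises momentum-degree by one, so each momentum-homogeneous part of an integral is itself an integral; in complexified variables the top equation gives $\partial_{\bar z}a_0=0$, so $a_0$ is a bounded doubly periodic holomorphic function, hence constant; and if $a_0=0$ one factors out $p\bar p=2\lambda H$ and inducts on the degree.

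The gap lies in the step you yourself flag as the hard one, and it is not merely left unproven --- as stated, it is false. You propose to show that the Fourier support of \emph{every} coefficient $a_k$ is confined to a one-dimensional sublattice, which would force $\lambda$ to be invariant in one lattice direction. Test this against the basic example the theorem must accommodate: $\lambda(x^1,x^2)=f_1(x^1)+f_2(x^2)$ with both $f_i$ nonconstant trigonometric polynomials and $f_1+f_2>0$. All hypotheses of Theorem \ref{thm:dk1} hold, yet $\lambda$ is invariant in no direction, and the homogeneous quadratic integral independent of $H$ reads, in your complex notation,
\begin{equation*}
F \;=\; \frac{f_2\,p_1^2-f_1\,p_2^2}{f_1+f_2} \;=\; \frac{p^2+\bar p^{\,2}}{4} \;+\; \frac{f_2-f_1}{2\,(f_1+f_2)}\,p\bar p\,,
\end{equation*}
so $a_0=a_2=\tfrac14$ are constant but the middle coefficient $a_1=(f_2-f_1)/(2(f_1+f_2))$ has genuinely two-dimensional Fourier support (expand $1/(f_1+f_2)$ in a geometric series). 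Hence no correct cascade argument can confine the supports of the $a_k$ to a line, and the hedge ``or else places it in the Liouville class'' has no mechanism behind it: a one-dimensional-support conclusion simply cannot hold in the Liouville case. What the descent equations together with the finiteness of $\mathcal{S}_\lambda$ (see \eqref{eq:spectrum}) must actually yield --- and what \cite{denisovakozlov1} proves, in the same spirit as the Kozlov--Treshchev result \cite{kozlovtreshchev} quoted below Theorem \ref{thm:polynomial} --- is a statement about the spectrum of $\lambda$ itself: $\mathcal{S}_\lambda$ lies on at most two mutually orthogonal lines through the origin. One line gives a linear integral (a surface of revolution); two orthogonal lines give, after rotating coordinates, the Liouville form and a quadratic integral. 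Extracting this one-or-two-orthogonal-lines dichotomy from the extreme harmonics of the finite spectrum under the descent equations is the entire content of the theorem, and it is precisely the part your proposal does not supply.
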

Note that by Weierstrass's Theorem, any conformal factor $\lambda$ can be approximated as closely as required by a trigonometric polynomial. However, in the case of a general conformal factor $\lambda$, there is the following Theorem, again due to Denisova and Kozlov~\cite{denisovakozlov2}. 
\begin{thm} {\rm (Denisova-Kozlov \cite{denisovakozlov2})} \label{thm:dk2} \\
Assume that the geodesic flow on $(\T^2, g)$ is polynomially integrable with first integral $F$ of degree $n$ such that 
	\begin{itemize}
		\item[(a)] if $n$ is even, then $F$ is an even function of $p_1$ and $p_2$, 
		\item[(b)] if $n$ is odd, then $F$ is an even function of $p_1$ (or $p_2$) and an odd function of $p_2$ (or $p_1$). 
	\end{itemize}
Then there exists an polynomial first integral of degree at most two. 
\end{thm}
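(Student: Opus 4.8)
The plan is to analyze the integrability condition $\{H,F\}=0$ directly for the geodesic Hamiltonian $H = (p_1^2+p_2^2)/(2\lambda)$ attached to the isothermal metric \eqref{eq:isothermal}. First I would observe that $H$ is homogeneous of degree two in the momenta, so $\{H,\cdot\}$ preserves the momentum-degree and raises it by one; hence each homogeneous (in $p$) component of $F$ is separately a first integral, and the parity hypotheses (a)/(b) force every nonzero component to share the same parity. It therefore suffices to treat a single homogeneous integral $F$ of degree $n$ and to prove that, for $n\ge 3$, its existence yields a nontrivial integral of degree $n-2$ (or directly of degree $\le 2$); an induction on $n$ then finishes. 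Next I pass to $z = x^1 + \I x^2$ and $w = p_1 - \I p_2$, in which $H = w\bar w/(2\lambda)$ and $F = \sum_{k=0}^n c_k(z,\bar z)\,w^k\bar w^{\,n-k}$ with reality constraint $c_{n-k}=\overline{c_k}$. A direct computation rewrites the bracket as
\begin{equation*}
\{H,F\} = \frac{1}{\lambda}\big(\bar w\,\partial_z + w\,\partial_{\bar z}\big) F + \frac{w\bar w}{\lambda^2}\big((\partial_z\lambda)\,\partial_w + (\partial_{\bar z}\lambda)\,\partial_{\bar w}\big) F ,
\end{equation*}
and collecting the coefficients of $w^{\,n+1-j}\bar w^{\,j}$ for $j=0,\dots,n+1$ turns $\{H,F\}=0$ into a finite chain of first-order equations relating the $c_k$ and $\lambda$.

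\textbf{The leading coefficient is constant.} The top equation (coefficient of $w^{\,n+1}$) reduces to $\partial_{\bar z}c_n = 0$, so $c_n$ is holomorphic in $z$. Since $c_n$ is a single-valued function on $\T^2$, it is doubly periodic and holomorphic, hence an elliptic function without poles, hence constant; by conjugation $c_0 = \overline{c_n}$ is constant as well. If $c_n=0$ the effective degree of $F$ drops and the induction hypothesis applies, so I may assume $c_n\neq 0$. At this point I would translate the parity hypotheses into the complex picture: (a) forces every $c_k$ to be real with $c_{n-k}=c_k$, while (b) forces every $c_k$ to be purely imaginary with $c_{n-k}=-c_k$. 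In either case the number of independent coefficient functions is halved and the chain acquires a definite symmetry under $k\mapsto n-k$, which is precisely what makes the subsequent descent tractable.

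\textbf{Descent through the chain.} With $c_n$ constant I would solve the chain from the top downward: each intermediate equation is schematically $\partial_{\bar z}c_k = (\text{expression in } c_{k+1},\lambda \text{ and their derivatives})$, to be solved for a single-valued $c_k$ on $\T^2$. Whenever this is possible, the surviving lower data either exhibit $F$ — modulo $H$ times a homogeneous integral of degree $n-2$ — as a reducible object, or else force $\lambda$ into Liouville (respectively surface-of-revolution) form, at which stage a quadratic (respectively linear) integral is produced via Proposition~\ref{prop:linearandquadratic}. Iterating the reduction $n\mapsto n-2$ terminates at degree at most two, which is the assertion.

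\textbf{Main obstacle.} The genuine difficulty is the \emph{global} solvability of the intermediate equations on $\T^2$. Locally each is a solvable first-order ($\bar\partial$-type) equation, but single-valuedness on the torus imposes a compatibility condition, namely the vanishing of a mean value over $\T^2$ (the cokernel of $\bar\partial$ on functions being spanned by the constants). Verifying that these mean-value obstructions vanish at every step, and thereby closing the induction $n\mapsto n-2$, is where the parity hypotheses (a)/(b) are used essentially and where the bulk of the technical work lies; I would expect to combine the symmetry $c_{n-k}=\pm c_k$ with an averaging argument over the two cycles of $\T^2$ and the homogeneity of $F$ to show that the obstructions cancel identically.
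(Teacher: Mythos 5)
This statement is one the paper does not prove: Theorem~\ref{thm:dk2} is quoted from Denisova--Kozlov \cite{denisovakozlov2} as part of the survey in Section~\ref{sec:integrablemetricsonT2}, so your attempt can only be measured on its own merits. Your preparatory steps are correct and standard: the bracket formula in the variables $z=x^1+\I x^2$, $w=p_1-\I p_2$ is right; homogeneity of $H$ in the momenta does split $F$ into homogeneous integrals, each inheriting the parity hypotheses; the coefficient of $w^{n+1}$ does give $\partial_{\bar z}c_n=0$, whence $c_n$ is an elliptic function without poles and therefore constant; and your dictionary for (a)/(b) --- $c_k$ real with $c_{n-k}=c_k$, respectively purely imaginary with $c_{n-k}=-c_k$ --- checks out. (One imprecision: if $c_n=0$ the degree does not ``drop''; rather $c_0=\overline{c_n}=0$ as well, so $F=w\bar w\, G=2H\,(\lambda G)$ factors, and $\{H,\lambda G\}=0$ with $\lambda G$ of degree $n-2$ inheriting the parity. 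This is fixable.)

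The genuine gap is your Step 3. The assertion that the chain of equations coupling $c_{k+1}$, $c_k$ and $\lambda$ either exhibits $F$ as reducible modulo $H$ or forces $\lambda$ into Liouville (or revolution) form is not an argument --- it is a restatement of the theorem --- and you explicitly defer the only hard point, namely proving that the $\bar\partial$-obstructions (the mean-value conditions over $\T^2$) vanish at every stage of the descent. That deferred step is the entire mathematical content of the result. The clearest evidence that the gap is fatal as it stands: your outline makes no \emph{effective} use of the parity hypotheses; the symmetry $c_{n-k}=\pm c_k$ is recorded but never enters any equation, so the identical sketch would ``prove'' the unconditional conjecture of Denisova--Kozlov stated after Theorem~\ref{thm:polynomial} (every polynomial integral on $\T^2$ reduces to one of degree at most two), which is open. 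A correct proof must show concretely how the parity halves the unknown coefficient functions and closes the otherwise intractable overdetermined chain; nothing of that sort appears here. A second, smaller omission: the induction must deliver an integral functionally \emph{independent} of $H$ --- otherwise the conclusion is vacuous, since $H$ itself is a first integral of degree two --- and nothing in your descent rules out that the degree-$(n-2)$ output is merely a power of $H$ (as indeed happens when one starts the descent from $F=H^{n/2}$).
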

In the following Theorem we collect several results from Bialy \cite{bialy1987}, Denisova, Kozlov~\cite{denisovakozlov3}, and Treshchev \cite{denisovakozlovtreshev2012}, Agapov and Aleksandrov \cite{agapov}, and Mironov \cite{mironov}.
\begin{thm} \label{thm:polynomial}
Let $H$ be a natural mechanical Hamiltonian (see \eqref{eq:Hamiltonian2}) on  the torus $\T^2$ equipped with the flat metric $g_{\mathrm{eucl}}$. Assume that $H$ is polynomially integrable of degree $n$. If $n = 3,4$, there exists another polynomial first integral of degree at most two. Whenever $H$ is a real-analytic Hamiltonian, this is also true for $n=5$. 
\end{thm}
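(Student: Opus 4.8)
The plan is to extract the structure of a polynomial first integral $F$ of degree $n$ directly from the involution condition $\{H,F\}=0$. First I would decompose $F=F_n+F_{n-1}+\dots+F_0$ into its homogeneous-in-momenta components. Since the kinetic term $T=\tfrac12(p_1^2+p_2^2)$ has momentum-degree two and $V$ has degree zero, the identity $\{H,F\}=\{T,F\}-\{V,F\}=0$ splits into homogeneous pieces of each momentum-degree; moreover, because $H$ is even under $p\mapsto -p$ and this map is anti-symplectic, the even and odd parts of $F$ are separately first integrals, so one may assume $F$ has definite parity. Collecting by degree yields the hierarchy $\{T,F_n\}=0$ together with the transport chain $\{T,F_k\}=\{V,F_{k+2}\}$, i.e. $p\cdot\nabla_x F_k=(\text{data built from }F_{k+2}\text{ and }\nabla V)$.

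The top equation $\{T,F_n\}=p\cdot\nabla_x F_n=0$ I would solve in complex notation $z=x^1+\I x^2$, $\zeta=p_1+\I p_2$, where it reads $\zeta\,\partial_z F_n+\bar\zeta\,\partial_{\bar z}F_n=0$. Writing $F_n=\sum_{k} a_k(z,\bar z)\,\zeta^{n-k}\bar\zeta^{k}$ this becomes the Cauchy--Riemann-type recursion
\[
\partial_z a_j+\partial_{\bar z}a_{j-1}=0,\qquad a_{-1}=a_{n+1}=0 .
\]
The extreme coefficients $a_0,a_n$ are anti-holomorphic resp.\ holomorphic, hence constant on the compact torus $\T^2$, and propagating the recursion inward the periodicity excludes the polynomially growing solutions. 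Thus $F_n$ must have constant coefficients: it is an honest homogeneous polynomial in $p_1,p_2$, a flat Killing tensor.

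Next I would descend the hierarchy. Each transport equation $p\cdot\nabla_x F_k=\{V,F_{k+2}\}$ is solvable on $\T^2$ only if its right-hand side is compatible, i.e.\ its mean along the closed characteristics (equivalently, the relevant Fourier modes) vanishes. Expanding $V$ and the unknown coefficient functions in Fourier series, these compatibility conditions turn into a system of algebraic relations coupling the coefficients $U_{\boldsymbol k}$ of $V$ across the shifts dictated by the momentum monomials of $F_{k+2}$. The crux is to show that demanding a \emph{genuine} degree-$n$ integral forces the components $F_n,F_{n-2},\dots$ into the subspace generated by $T$ and a single quadratic integral — equivalently, that the surviving Fourier modes of $V$ are exactly those compatible with a Liouville (separable) structure, after which Proposition~\ref{prop:linearandquadratic} supplies the degree-two integral. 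For $n=3$ and $n=4$ the recursion closes after finitely many steps and the relations can be resolved explicitly, yielding the reduction to degree at most two.

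The main obstacle is precisely this Fourier-mode analysis as $n$ grows: both the number of coupled modes and the depth of the recursion increase with the degree, so the associated infinite linear system no longer terminates and a finite computation cannot conclude. For $n=5$ I would impose real-analyticity of $V$, hence geometric decay of its Fourier coefficients, which controls the tail of the infinite system and rules out nontrivial high-frequency solutions, letting the same reduction go through. Pushing to $n\ge 6$, or removing analyticity at $n=5$, would require a genuinely new mechanism to tame this unbounded recursion, which is exactly why the statement stops at these degrees.
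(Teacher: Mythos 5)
Your opening moves are correct, and they are indeed the standard ones shared by all work on this problem: splitting $F$ into momentum-homogeneous parts, using the anti-symplectic involution $p\mapsto -p$ to reduce to definite parity, showing via the Cauchy--Riemann-type recursion and Liouville's theorem on the compact torus that the leading part $F_n$ has constant coefficients, and writing the remaining chain $\{T,F_k\}=\{V,F_{k+2}\}$ with solvability expressed as divisibility/vanishing conditions on Fourier modes. Be aware, however, of what you are being compared against: the paper does not prove Theorem~\ref{thm:polynomial} at all. It is a survey statement whose ``proof'' consists of citations to \cite{bialy1987}, \cite{denisovakozlov3}, \cite{denisovakozlovtreshev2012}, \cite{agapov} and \cite{mironov}; each case ($n=3$, $n=4$, and $n=5$ under analyticity) is the content of a separate research paper.

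The genuine gap in your proposal is the sentence claiming that ``for $n=3$ and $n=4$ the recursion closes after finitely many steps and the relations can be resolved explicitly.'' The recursion in the momentum degree closes after finitely many steps for \emph{every} $n$, so this cannot be what distinguishes $n\le 4$ from higher degree. What actually has to be proven is that the compatibility system --- which is infinite-dimensional, since $V$ is not assumed to be a trigonometric polynomial and all Fourier modes of $V$ and of the unknown coefficients of $F_{n-2},F_{n-4},\dots$ are coupled --- forces the spectrum of $V$ onto one line (or two orthogonal lines) through the origin; only then does separability, via Proposition~\ref{prop:linearandquadratic}, produce the integral of degree at most two. Your proposal contains no mechanism that achieves this, and, more tellingly, no mechanism that could explain the actual boundary of the theorem: as written, your argument would ``prove'' the statement for all $n$, which is false as a description of the state of the art and is precisely the open part of the Denisova--Kozlov conjecture. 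The same criticism applies to your use of analyticity at $n=5$: geometric decay of Fourier coefficients is available at every degree, so it cannot by itself be the reason degree five goes through while degree six remains open; in \cite{denisovakozlovtreshev2012, mironov, agapov} the analytic case rests on much more delicate structure than tail control of an infinite linear system. In short, you have correctly reproduced the common setup, but the entire content of the theorem --- the resolution of the compatibility system in the low-degree cases --- is asserted rather than proven.
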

Kozlov and Treshchev \cite{kozlovtreshchev} considered the problem from yet another point of view. They investigated the case of a mechanical Hamiltonian 
\begin{equation*}
H = \frac{1}{2} \sum_{ij} a_{ij} p_i p_j + V(x^1, ... , x^n)\,,
\end{equation*}
where $A = (a_{ij})_{ij}$ is a positive definite matrix and $V$ is a trigonometric polynomial of $(x^1, ... , x^n) \in \T^n$. On the one hand, they show that there exist $n$ polynomial first integrals if and only if the spectrum $\mathcal{S}_V$ of $V$ is contained in $m \le n$ mutually orthogonal lines meeting at the origin. On the other hand, they showed that whenever there exist $n$ polynomial integrals with independent forms of highest degree, then there exist $n$ independent involutive polynomial first integrals of degree at most two. {In case that $a_{ij} = \delta_{i,j}$ (which can be achieved by diagonalization and scaling), Combot \cite{combot} improved the first result from the assumption of \emph{polynomial} integrability to \emph{rational} integrability, i.e.~the additional first integrals being rational functions of $p_i$ and $\E^{\I 2 \pi x^i}$.} More recently \cite{sharaf1, HMS, sharaf2}, the problem was rephrased in the language of Killing tensor fields on $\T^2$, where the \emph{order} of an additional (polynomial) first integral is replaced by the \emph{rank} of a Killing tensor filed. 

The results of Theorems \ref{thm:dk1}, \ref{thm:dk2} and \ref{thm:polynomial} support the validity of the following weaker version of the folklore conjecture formulated by Denisova and Kozlov~\cite{denisovakozlov1}.
\\[3mm]
\textbf{Conjecture.~\cite{denisovakozlov1}} \textit{If $g$ is a metric on $\T^2$ that is polynomially integrable, then there exists an additional polynomial first integral of degree at most two. }
\\[3mm]
By Proposition \ref{prop:linearandquadratic} this means that polynomially integrable metrics on $\T^2$ are Liouville metrics. However, beside the partial results given above, a proof of this conjecture is still open. The numerous attempts on proving it used methods of complex analysis \cite{birkhoff1927, babenkoneko1995} and the theory of PDEs \cite{bialymironov1, bialymironov2}. More precisely, it is shown by Kolokoltsov \cite{kolokoltsov} that there exists an additional first integral quadratic in the momenta if and only if there exists a holomorphic function $R(z) = R_1(z) +\I R_2(z)$, with real valued $R_1$ and $R_2$ and $z = x^1+\I x^2$, which solves 
\begin{equation} \label{eq:kolokoltsovpde}
R_2 (\partial_{x^2}^2 \lambda - \partial_{x^1}^2 \lambda) +  R_1 (\partial_{x^1} \partial_{x^2} \lambda) - 3 (\partial_{x^1} R_2)( \partial_{x^1} \lambda) +3 (\partial_{x^2} R_2) (\partial_{x^2} \lambda) + 2 (\partial_{x^2}^2 R_2) \lambda = 0\,,
\end{equation}
where $\lambda$ denotes the conformal factor from \eqref{eq:isothermal}. Note that the second term in \eqref{eq:kolokoltsovpde} disappears whenever $\lambda$ is the conformal factor of a Liouville metric. In this situation, the linear PDE \eqref{eq:kolokoltsovpde} always has a holomorphic solution $R = R_1+\I R_2$. The existence of first integrals of higher degree turns out to be equivalent to delicate questions about non-linear PDEs of hydrodynamic type~\cite{bialymironov1, bialymironov3, bialymironov2}. The PDE-approach has also successfully been applied to generate new examples of integrable \emph{magnetic} geodesic flows as analytic deformations of Liouville metrics on $\T^2$ without magnetic field (see \cite{ABM2017}).

Regarding the original folklore conjecture stated above, there is a result due to Corsi and Kaloshin \cite{corsikaloshin}, which shows it being false in the following (weaker) sense. 
\begin{thm}{\rm (Corsi-Kaloshin \cite{corsikaloshin})} \label{thm:corsikaloshin}\\
There exists a real-analytic mechanical Hamiltonian
\begin{equation*}
H_\epsi(x^1,x^2,p_1,p_2) = \frac{p_1^2 + p_2^2}{2} + U(x^1,x^2;\epsi) 
\end{equation*}
with a non-separable\footnote{The function $U$ is called non-separable whenever it cannot be written as a sum of two single-valued functions.} potential $U$ and an analytic change of variables $\Phi$ such that $H_\epsi \circ \Phi =  (p_1^2 + p_2^2)/2$ on the energy surface $\{ H_\epsi = 1/2 \}$ and $p \in \mathcal{P}$, where $\mathcal{P}$ denotes a certain cone in the action space. 
\end{thm}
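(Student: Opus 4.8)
The plan is to reverse-engineer the Hamiltonian from the desired conjugacy rather than to start from a guessed potential. Realizing $\Phi$ as the canonical transformation generated by a mixed-type generating function $S(x,p') = p'\cdot x + \sigma(x,p')$, with $\sigma$ periodic in $x = (x^1,x^2)$ and analytic in the new momentum $p'$, the identity $H_\epsi\circ\Phi = |p'|^2/2$ on the energy surface is equivalent to the stationary Hamilton--Jacobi equation
\[
\tfrac12\,\bigl|\,p' + \nabla_x\sigma(x,p')\,\bigr|^2 + U(x) = \tfrac12\,|p'|^2 ,
\]
i.e.\ $U(x) = -\,p'\cdot\nabla_x\sigma(x,p') - \tfrac12|\nabla_x\sigma(x,p')|^2$. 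The graphs $p = p' + \nabla_x\sigma(x,p')$, parametrized by $p'$ in the cone $\mathcal{P}$, then foliate the relevant region and conjugate the flow to free motion, exactly as in the weak-KAM picture of Proposition~\ref{prop:KAM}. Thus the whole statement reduces to producing an analytic family $\sigma(\cdot,p')$ whose associated right-hand side $U$ is \emph{independent of $p'$} on $\mathcal{P}$ and \emph{non-separable}; the coupling of all $p'$-values through this last requirement is the crux.

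Next I would build $\sigma$ and $U$ together as power series in a small parameter, $U = \epsi u + \mathcal{O}(\epsi^2)$ and $\sigma = \epsi\sigma^{(1)} + \epsi^2\sigma^{(2)} + \dots$, treating $p'$ as a frequency. At first order the equation linearizes to the cohomological equation $p'\cdot\nabla_x\sigma^{(1)} = -u$, solved in Fourier by $\widehat{\sigma^{(1)}}(\K) = -\,\widehat{u}(\K)/(2\pi\I\,p'\cdot\K)$. I would take $u$ to be a non-separable, mean-zero trigonometric polynomial possessing a mode $\K$ with both entries nonzero (so that $\K\in\mathcal{S}_{U,0}$, recall \eqref{eq:nonsinspec}); the cone $\mathcal{P}$ is then chosen to stay uniformly away from the finitely many hyperplanes $\K^\perp$, $\K\in\mathcal{S}_u$, which gives a uniform lower bound on $|p'\cdot\K|$ and hence analyticity of $\sigma^{(1)}$ in $p'\in\mathcal{P}$. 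The higher orders satisfy cohomological equations of the same type, whose right-hand sides are assembled from the quadratic term $\tfrac12|\nabla_x\sigma|^2$ and therefore involve only sums of previously generated modes.

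The hard part, and the reason the result is only local (a cone) rather than global, is the control of small divisors. The quadratic nonlinearity feeds products of Fourier modes into $U$ and $\sigma$ at every order, so their spectra spread over the whole lattice $\Z^2$; since rational directions are dense, for any fixed cone there are modes $\K$ with $p'\cdot\K$ arbitrarily small as $p'$ ranges over $\mathcal{P}$, and the naive series diverges. I would overcome this by (i) imposing a Diophantine lower bound $|p'\cdot\K|\ge\gamma|\K|^{-\tau}$ uniformly on a full-measure sub-cone of $\mathcal{P}$ -- admissible in the analytic category -- and (ii) replacing the order-by-order expansion by a quadratically convergent KAM/Nash--Moser iteration in a scale of analyticity norms, solving a cohomological equation at each step while \emph{simultaneously adjusting} $U$ (and the energy normalization, i.e.\ Mather's $\alpha$-function) to absorb the resonant obstructions. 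The decisive freedom is that $U$ is \emph{not} prescribed in advance: the solvability and mean-value conditions at each step are met by redefining $U$ rather than by constraining a fixed potential, which is precisely what lets the scheme close on the cone while being obstructed globally -- consistent with the folklore conjecture being expected to hold on all of $\T^2$.

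Finally I would check that the converged $\sigma$ defines, through $S = p'\cdot x + \sigma$, a genuine analytic canonical diffeomorphism $\Phi$ close to the identity, that $U := -p'\cdot\nabla_x\sigma - \tfrac12|\nabla_x\sigma|^2$ is indeed $p'$-independent by construction and real-analytic, and that $H_\epsi\circ\Phi = |p'|^2/2$ holds on $\{H_\epsi = 1/2\}$ for $p'\in\mathcal{P}$. Non-separability of $U$ follows because its leading term $\epsi u$ carries a mode in $\mathcal{S}_{U,0}$ that the $\mathcal{O}(\epsi^2)$ corrections cannot cancel for small $\epsi$; by the Maupertuis correspondence (Section~\ref{subsec:maupertuis}) this produces a conformal metric that is integrable on the cone yet not of Liouville type. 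I expect the uniform convergence of the KAM scheme over the momentum cone -- that is, the uniform handling of small divisors as $p'$ varies -- to be the single genuine difficulty; everything else is bookkeeping in analytic norms.
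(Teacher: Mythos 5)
First, a point of reference: the paper does not prove Theorem \ref{thm:corsikaloshin} at all --- it is quoted from \cite{corsikaloshin} --- so your proposal must be measured against the construction in that reference. Your skeleton does match how that construction starts: a generating function $S = p'\cdot x + \sigma(x,p')$, the Hamilton--Jacobi identity $U = -p'\cdot\nabla_x\sigma - \tfrac12|\nabla_x\sigma|^2$, building $U$ and $\sigma$ together order by order, and choosing the momentum cone away from the resonant lines of the seed spectrum. The genuine gap is in your treatment of the higher orders. You assert that the quadratic term inevitably spreads the spectrum over all of $\Z^2$, and you propose to fight the resulting small divisors with a Diophantine condition on a full-measure sub-cone plus a Nash--Moser iteration that ``simultaneously adjusts'' $U$. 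Two things go wrong. First, the set of $p'$ satisfying $|p'\cdot \K|\ge\gamma|\K|^{-\tau}$ for \emph{all} $\K\in\Z^2\setminus\{0\}$ has dense complement: it is a positive-measure Cantor-type set with \emph{empty interior}, so a scheme run on it yields a family of tori at best Whitney-regular in $p'$, not an analytic change of variables on an \emph{open} cone $\mathcal{P}$, which is what the theorem asserts. Second, the ``decisive freedom'' of adjusting $U$ cannot actually be exercised: $U$ must be a single function of $x$, independent of $p'$, whereas the resonant obstructions you want it to absorb are indexed by modes $\K\perp p'$ and therefore vary with $p'$ across the continuum; cancelling the obstruction at one resonant direction perturbs the equation at every other $p'$. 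This coupling across $p'$ --- which you yourself flag as ``the crux'' in your first paragraph --- is precisely what breaks the scheme once resonances are ever met.

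The idea you are missing, and the heart of the actual construction, is to place the Fourier support of the seed $u$ inside a \emph{convex sector} $C\subset\Z^2$ of aperture less than $\pi$, and to choose the momentum cone $\mathcal{P}$ transversal to it, so that $|p'\cdot\K|\ge c\,|p'|\,|\K|$ for all $\K\in C$ and $p'\in\mathcal{P}$. Since the quadratic nonlinearity only produces \emph{sums} of existing harmonics and a convex sector is closed under addition, every harmonic generated at every order stays in $C$: no small divisor ever appears, the divisors in fact grow linearly in $|\K|$, and the naive series (equivalently, a contraction argument in an analytic norm) converges uniformly for $p'$ in the \emph{open} cone. No Diophantine condition and no quadratic scheme are needed, and analyticity of $\Phi$ and of $U$, jointly and on an open cone, comes for free; this is also why the result is local in the action space and does not contradict the rigidity results of the present paper, whose hypothesis \eqref{itm:P} requires preservation of tori for \emph{all} directions in $\mathcal{B}_0(\mathcal{S}_U^\perp)$, not only those in a sector. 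Non-separability is then arranged as you say, by placing a mode with both components nonzero in the seed. In short: your first-order setup is right, but your handling of the small divisors both misses the mechanism that makes the theorem true and, as formulated, cannot produce the open-cone analytic conjugacy that the statement requires.
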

If one assumes that the \emph{whole} phase space $T^*\T^2$ is foliated by  two-dimensional invariant Liouville tori (which is often called $C^0$-integrability or \emph{complete integrability}), then it follows from Hopf conjecture \cite{hopf} that the associated metric must be flat.\footnote{Similar results have been shown for geodesic flows of more general Finsler metrics on $\T^2$ preserving a sufficiently regular foliation of the phase space \cite{finsler1, finsler2}}  This notion of integrability is thus too strong for a meaningful characterization of integrable metrics on~$\T^2$. 
\subsection{Analogy to integrable billiards} \label{subsec:Billiard}
 The fundamental question \eqref{itm:Q2} of characterizing integrable metrics on the torus $\T^2$ can be thought of as an analogue of identifying the class of integrable billiards \cite{KS2}. For billiards, integrability is understood in a similar way as for the geodesic flow (see Definition \ref{def:integrable}). More precisely, integrability is characterized either through the existence of an integral of motion (near the boundary of the billiard table) for the so called billiard ball map, or the existence of a foliation of the phase space (globally, or near the boundary), consisting of invariant curves. The \textit{classical Birkhoff conjecture} \cite{birkhoff19272, poritsky} states that the boundary of a strictly convex integrable billiard table is necessarily an ellipse. This corresponds to the folklore conjecture formulated above. Remarkably, while the Birkhoff conjecture is believed to be true, and there is strong evidence that this indeed the case \cite{bialymironov4, glutsyuk, ADK, KS}, the folklore conjecture in its general form was shown to be false by Theorem \ref{thm:corsikaloshin}. 
 
 However, recall that, if one assumes $C^0$-integrability of a metric on $\T^2$, the metric is actually flat. This corresponds to the following result from Bialy in the case of billiards. 
 \begin{thm}{\rm (Bialy \cite{bialy1993})} \label{thm:bialy} \\
If the phase space of the billiard ball map is completely foliated by continuous invariant curves which are all not null-homotopic, then the boundary of the billiard table is a circle. 
 \end{thm}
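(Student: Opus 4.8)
The plan is to transplant E.~Hopf's theorem on Riemannian metrics without conjugate points \cite{hopf} into the billiard setting, an analogy which is the conceptual heart of the argument. I would parametrize the phase space of the billiard ball map by $(s,\varphi)$, where $s$ is arc length along the boundary (of total length $L$) and $\varphi \in (0,\pi)$ is the angle the outgoing chord makes with the tangent; the map then preserves the measure $\sin\varphi\,\D\varphi\,\D s$. The first step is to note that this map is a \emph{monotone twist map} of the annulus, so that by the classical theorem of Birkhoff every continuous non-null-homotopic invariant curve is automatically a Lipschitz graph $\varphi = \varphi(s)$. Thus the hypothesized foliation is in fact a foliation by Lipschitz graphs, monotonically ordered by rotation number.

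Next I would reduce the theorem to a single integral-geometric inequality. Since the boundary is a closed convex curve, its curvature $k(s)$ satisfies $\oint k(s)\,\D s = 2\pi$, and applying the Cauchy--Schwarz inequality to $\sqrt{k}$ and $1/\sqrt{k}$ gives $\oint \tfrac{1}{k(s)}\,\D s \ge L^2/(2\pi)$, with equality if and only if $k$ is constant, i.e.~the boundary is a circle. Hence it suffices to establish the \emph{reverse} inequality $\oint \tfrac{1}{k(s)}\,\D s \le L^2/(2\pi)$ as a consequence of the foliation; the two bounds then force equality and therefore $k \equiv \mathrm{const}$.

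The third step is to produce this reverse inequality from the dynamics via the billiard \emph{mirror equation}: for a chord meeting the boundary at angle $\varphi$ with curvature $k$ at the reflection point, the infinitesimal focusing distances $a$ (incoming) and $a_1$ (outgoing) satisfy $\tfrac{1}{a}+\tfrac{1}{a_1} = \tfrac{2k}{\sin\varphi}$. A \emph{complete} foliation by invariant graphs is the billiard analogue of having no conjugate points along orbits, which allows one to define along each leaf a well-defined focusing (Riccati-type) quantity of fixed sign. I would then apply the arithmetic--geometric mean inequality to the mirror equation to extract a pointwise nonnegative defect, integrate it against the invariant measure $\sin\varphi\,\D\varphi\,\D s$ over the whole phase space, and use the invariance of the foliation together with Fubini to collapse the double integral to $\oint \tfrac{1}{k(s)}\,\D s$, yielding exactly the bound $\oint \tfrac{1}{k(s)}\,\D s \le L^2/(2\pi)$.

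The hard part will be precisely this last step: converting the qualitative hypothesis (``completely foliated by non-contractible continuous curves'') into the quantitative Riccati estimate, and justifying the focusing calculus and the integral manipulations when the leaves are only Lipschitz rather than smooth, so that the relevant derivatives and the invariant-measure integration are legitimate almost everywhere. Once the inequality is in hand the conclusion is immediate, since equality in Cauchy--Schwarz forces $k$ to be constant, which characterizes the circle.
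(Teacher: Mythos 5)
You cannot be compared against the paper's own proof here, because there is none: Theorem \ref{thm:bialy} is stated in the survey Section \ref{subsec:Billiard} as a quoted result of Bialy \cite{bialy1993}, with no proof given in the paper. The only meaningful benchmark is Bialy's original argument, and measured against that, your proposal is essentially a reconstruction of the published proof rather than a different route: Bialy's paper is literally titled ``Convex billiards and a Theorem by E.~Hopf'', and its skeleton is the one you describe --- Birkhoff's twist-map theorem turns the non-null-homotopic leaves into Lipschitz graphs; the foliation produces along (almost) every orbit a sign-definite solution of a discrete Jacobi/Riccati recursion (the billiard ``no conjugate points'' condition); a pointwise arithmetic--geometric mean inequality is integrated against the invariant measure $\sin\varphi\,\D\varphi\,\D s$, with invariance used to identify shifted terms; and a classical integral-geometric inequality applied in reverse pins down the circle.

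Two caveats on your details. First, the concluding collapse is not quite what the computation gives: using the mirror equation, the invariance relation $a\circ T=\ell-a_1$ for the focusing field, and Santal\'o's formula $\int \ell\,\D\mu = 2\pi A$, your scheme yields $\oint \D s/k(s) \le 2A$, not directly $\oint \D s/k(s)\le L^2/(2\pi)$; to finish you must add one more classical input --- either the isoperimetric inequality $2A\le L^2/(2\pi)$ followed by your Cauchy--Schwarz equality case, or the Wirtinger-type inequality $\oint \D s/k\ge 2A$ with equality only for circles. (Bialy himself works with the second derivatives of the chord-length generating function instead of the mirror equation and lands on $L^2\le 4\pi A$, which against the isoperimetric inequality forces a disc; the endings are equivalent in spirit.) Second, the step you yourself flag as the hard part --- upgrading a merely continuous invariant foliation to a measurable, a.e.-defined, a.e.-invariant, sign-definite focusing field for which the Riccati calculus and the Fubini manipulations are legitimate --- is precisely where the bulk of Bialy's paper is spent; your outline correctly locates this difficulty but does not resolve it, so as written it is a faithful strategy sketch of the known proof rather than a complete argument.
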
 
Following a similar strategy leading to Theorem \ref{thm:bialy}, Bialy and Mironov \cite{bialymironov5} proved the Birkhoff conjecture for centrally symmetric billiards, assuming only \emph{local} $C^0$-integrability, i.e.~the foliation of a suitable open \emph{proper} subset of the phase space.
Beside this, the weakened version of the folklore conjecture (polynomial integrals can be reduced to integrals of degree at most two) corresponds to the so called \textit{algebraic Birkhoff conjecture}, which has recently been proven \cite{bialymironov4, glutsyuk}.

The main results of this paper in Theorem \ref{thm:1}, Theorem \ref{thm:2}, and Theorem \ref{thm:3} prove special cases of our conjecture that integrable deformations of Liouville metrics which preserve all (but finitely many) rational invariant tori are again Liouville metrics. This is related to the following conjecture in the case of billiards. 
\\[3mm]
\textbf{Perturbative Birkhoff conjecture.~\cite{KS2}} \textit{A smooth strictly convex domain that is sufficiently close to an ellipse and whose corresponding billiard ball map is integrable, is necessarily an ellipse.}
 \\[3mm]
A first result in this direction was obtained by Delshams and Ramírez-Ros \cite{ramros}. More recently, Avila, De Simoi, and Kaloshin \cite{ADK} proved the conjecture for domains which are sufficiently close to a circle. The complete proof for domains sufficiently close to an ellipse of any eccentricity is given by Kaloshin and Sorrentino in \cite{KS}. Both works require the preservation of rational caustics\footnote{A curve $\Gamma$ is a caustic for the billiard in the domain $\Omega$ if every time a trajectory is tangent to it, then it remains tangent after every reflection according to the billiard ball map.} which can be thought of as an analogue for the preservation of rational invariant tori as a fundamental assumption of our main results from Section~\ref{sec:Mainresults}. {The result in \cite{ADK} was later extended by Huang, Kaloshin, and Sorrentino \cite{HKS} to the case of \emph{local integrability} close to the boundary and finally improved by Koval~\cite{Koval}. }

Finally, as shown by Vedyushkina and Fomenko \cite{vedyushkinafomenko}, linearly and quadratically integrable geodesic flows on orientable two-dimensional Riemannian manifolds are Liouville equivalent to topological billiards, glued from planar billiards bounded by concentric circles and arcs of confocal quadrics, respectively. 
\section{Proofs} \label{sec:proofs}
In this Section we prove our main result as formulated in Theorem \ref{thm:1}, Theorem~\ref{thm:2}, and Theorem~\ref{thm:3}. All proofs will, in general, follow the same three step strategy. 
\begin{itemize}
\item[\bf(i)] Transform the unperturbed system $H_0$ in action-angle coordinates (cf.~Theorem~\ref{thm:liouvillearnold}, in particular eq.~\eqref{eq:actionanglecoordinates}). 
\item[\bf(ii)] Derive a first-order harmonic equation for the perturbation by Assumption~\eqref{itm:P}.
\item[\bf(iii)] Annihilate sufficiently many Fourier coefficients of the perturbing potential by proving a certain \emph{full-rank} condition for a naturally associated linear system for each of the three theorems separately (cf.~Lemmas~\ref{lem:annihilation}, \ref{lem:continuity}, and~\ref{lem:continuity2}). Finally, for analytic potentials $V_i$, the extensions of our results beyond the perturbative regime are proven by exploiting the analytic dependence of the linear system  on $\mu_i$ (see Appendix~\ref{app:AApendulum}). 
\end{itemize}

\subsection{Proof of Theorem \ref{thm:1}} \label{subsec:proofthm1}
{\bf \underline{Step (i).}} Fix an energy $e > 0$. Since the Hamiltonian is already in action-angle coordinates (cf.~\eqref{eq:actionanglecoordinates}), we simply change notation and write $(x^i,p_i) = (\theta^i,I_i)$ for $i = 1,2$ as well as $\theta = (\theta^1, \theta^2)$ and $I = (I_1,I_2)$, such that the perturbed Hamiltonian function $H_\epsi$ takes the form 
\begin{equation*}
H_\epsi(\theta, I) = \frac{I_1^2}{2} + \frac{I_2^2}{2} + \epsi U(\theta)\,.
\end{equation*}
\textbf{\underline{Step (ii).}}  By Assumption \eqref{itm:P}, for any $(n,m) \in \mathcal{B}_0(\mathcal{S}_U^\perp)$ (recall \eqref{eq:basis0perp}), we can find (in the isoenergy manifold $T_{e_\epsi}$ with energy $e = e_\epsi$ and $\epsi = \epsi_k $ for some $k \in \N$) a rational invariant  invariant Liouville torus with rotation vector $\omf = (\omega_1, \omega_2)$ which satisfies
\begin{equation} \label{eq:omega1}
\frac{\omega_1}{\omega_2} = \frac{n}{m} \in \mathbb{Q} \quad \text{and} \quad \omf = (c_1,c_2)
\end{equation}
for some $\cC \in H^1(\T^2, \R) \cong \R^2$, which we fix now.

Using Assumption \eqref{itm:P} again, we can expand the Hamilton-Jacobi equation \eqref{eq:HJE} as
\begin{align*}
\alpha_\epsi(\cC) &= H_\epsi(\theta, \cC + \nabla u_{\epsi, \cC}(\theta)) \\
&= \frac{\vert \partial_{\theta^1} u_{\epsi, \cC}(\theta) + c_1\vert^2}{2} + \frac{\vert \partial_{\theta^2} u_{\epsi, \cC}(\theta) + c_2\vert^2}{2} + \epsi U(\theta) \\
&= \frac{c_1^2}{2} + \frac{c_2^2}{2} + \left\langle \cC, \nabla u_{\epsi, \cC}(\theta) \right\rangle + \epsi U(\theta) +  \frac{\left(\partial_{\theta^1} u_{\epsi, \cC}(\theta) \right)^2}{2} + \frac{\left(\partial_{\theta^2} u_{\epsi, \cC}(\theta) \right)^2}{2} \,.
\end{align*}
and it holds that
\begin{equation*}
u_{\epsi, \cC} = u_{\cC}^{(0)} + \epsi u_{\cC}^{(1)} + \mathcal{O}_\cC(\epsi^2)
\end{equation*}
with $u_{\cC}^{(0)} = u_{0,\cC}$.
Since $H_0(\theta,I)$ is integrable (and written in action-angle coordinates), one can choose $u_{0,\cC} \equiv 0$. By { \eqref{eq:alpha1} in Proposition \ref{prop:gomes} (see also \cite{gomes})}  we have  $\alpha^{(1)}(\cC) = [U]_0$, where
\begin{equation*} 
[U]_0 = \int_{\T^2} U(x^1, x^2) \, \D x^1 \wedge \D x^2 \,.
\end{equation*}
Since the sequence $(\epsi_k)_{k \in \N}$ from Assumption \eqref{itm:P} converges to zero, we compare coefficients 
and establish the first order equation
\begin{equation} \label{eq:firstorder1}
[U]_0 =\alpha_\epsi^{(1)}(\cC) =  \braket{\cC, \nabla u^{(1)}_{\epsi, \cC}(\theta)} + U(\theta)\,.
\end{equation}

Averaging \eqref{eq:firstorder1} over the trajectory $\theta(t) = \theta_0 + \omf t \in \T^2$, with initial position $\theta_0 \in \T^2$ and where $\omf = \cC$ is chosen according to \eqref{eq:omega1}, such that the period $T_{\omf}$ satisfies $T_\omf \cdot \omf =  (n,m)$, we get
\begin{equation} \label{eq:average1}
[U]_0 =   \frac{1}{T_\omf} \int_{0}^{T_\omf} \frac{\D}{\D t}u^{(1)}_{\epsi, \cC}(\theta(t)) \, \D t + \frac{1}{T_\omf} \int_{0}^{T_\omf} U(\theta(t)) \, \D t\,.
\end{equation}
The first integral vanishes since $\theta(0)  =\theta(T_\omf) $ such that we are left with
\begin{equation} \label{eq:step2_1}
\int_{0}^{1} \left(U(\theta_0^1 + n t,\theta_0^2 + m t) - [U]_0\right) \D t = 0  
\end{equation}
for all $\theta_0 = (\theta_0^1, \theta_0^2) \in \T^2$, which easily follows from \eqref{eq:average1} after a change of variables. 

Before continuing with the third and final step, we have two important observation: First, by replacing $U \to U - [U]_0$, we can assume w.l.o.g.~that $[U]_0 = 0$. Second, we define the \emph{separable part}, $U_{\rm sep}$, of $U$ as 
\begin{equation} \label{eq:Usep}
U_{\rm sep}(x^1, x^2) := \sum_{(k_1, k_2) \in \mathcal{S}_U \setminus \mathcal{S}_{U,0}} U_{k_1, k_2} \E^{\I 2 \pi k_1 x^1} \E^{\I 2 \pi k_2 x^2}
\end{equation}
(recall the definition of the spectrum and the non-singular spectrum in \eqref{eq:spectrum} and \eqref{eq:nonsinspec}). Then, after a simple computation, we find that 
\begin{equation*}
\int_{0}^{1} U_{\rm sep}(\theta_0^1 + nt, \theta_0^2 + m t) \D t = [U_{\rm sep}]_0\,, \qquad \forall (\theta_0^1, \theta_0^2) \in \T^2\,,
\end{equation*}
holds \emph{generally} (i.e.~independent of the first order relation \eqref{eq:firstorder1}) by means of \eqref{eq:alpha1} in  Proposition~\ref{prop:gomes} (see also Remark \ref{rmk:propertyP1}). We can thus split off the separable part and assume that $\mathcal{S}_U = \mathcal{S}_{U,0}$ in the following. Hence, the third step consists of showing that $\mathcal{S}_U = \mathcal{S}_{U,0} = \emptyset$. 
\\[3mm]
\textbf{\underline{Step (iii).}} The goal of this final step is to establish the following lemma. 
\begin{lem} \label{lem:annihilation} Let $(n,m) \in \mathcal{B}_0(\mathcal{S}_U^\perp)$ as in \eqref{eq:omega1} from {\rm \bf Step (ii)}. Then $U_{jm, -jn} = 0$ for all $j \in \Z \setminus \{ 0 \}$.
\end{lem}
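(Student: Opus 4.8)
The plan is to read off the claim directly from the averaged first-order relation \eqref{eq:step2_1}, which, after the normalization $[U]_0 = 0$ adopted at the end of {\bf Step (ii)}, reads $\int_0^1 U(\theta_0^1 + nt, \theta_0^2 + mt)\,\D t = 0$ for every $\theta_0 = (\theta_0^1,\theta_0^2) \in \T^2$. First I would insert the absolutely convergent Fourier series of $U$ into this identity; absolute convergence (part of the standing hypothesis on $U$) justifies interchanging the summation with the $t$-integration. Since $n,m,k_1,k_2 \in \Z$, the elementary integral $\int_0^1 \E^{\I 2\pi(k_1 n + k_2 m)t}\,\D t$ equals $1$ when $k_1 n + k_2 m = 0$ and vanishes otherwise, so the identity collapses to
\[
\sum_{\substack{(k_1,k_2)\in\Z^2 \\ k_1 n + k_2 m = 0}} U_{k_1,k_2}\,\E^{\I 2\pi(k_1\theta_0^1 + k_2\theta_0^2)} = 0 \qquad \text{for all } \theta_0 \in \T^2.
\]

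The left-hand side is a function on $\T^2$ whose nonzero Fourier modes are supported on the integer line $\{(k_1,k_2): k_1 n + k_2 m = 0\}$ and are pairwise distinct; hence, by uniqueness of Fourier coefficients (equivalently, orthogonality of the characters $\E^{\I 2\pi(k_1\theta^1 + k_2\theta^2)}$ on $\T^2$), every coefficient appearing in the sum must vanish, i.e.\ $U_{k_1,k_2} = 0$ whenever $k_1 n + k_2 m = 0$. It then remains to parametrize the integer solutions of $k_1 n + k_2 m = 0$. Because $(n,m) \in \mathcal{B}_0(\mathcal{S}_U^\perp) \subset \mathcal{B}(\mathcal{S}_U^\perp)$ is coprime, the vector $(m,-n)$ generates all such solutions, so the solution set is exactly $\{(jm,-jn): j \in \Z\}$. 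Evaluating the previous conclusion along this parametrization yields $U_{jm,-jn} = 0$ for every $j \in \Z$; the case $j=0$ merely reproduces the normalization $U_{0,0} = [U]_0 = 0$, while $j \in \Z \setminus \{0\}$ gives precisely the assertion of Lemma~\ref{lem:annihilation}.

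As for difficulties, I do not expect a genuine obstacle here: the step is essentially bookkeeping in Fourier analysis. The only points demanding minimal care are the justification of termwise integration (supplied by absolute convergence of the Fourier series) and the coprimality argument identifying the one-dimensional solution lattice; both are routine. I would also note that this argument uses \eqref{eq:step2_1} only, and in particular does not rely on the reduction to $\mathcal{S}_U = \mathcal{S}_{U,0}$ performed earlier — that reduction becomes relevant only in the subsequent combination of Lemma~\ref{lem:annihilation} over all admissible directions $(n,m) \in \mathcal{B}_0(\mathcal{S}_U^\perp)$, which is what ultimately forces $\mathcal{S}_U = \emptyset$ and thereby separability of $U$.
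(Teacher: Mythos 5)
Your proposal is correct and follows essentially the same route as the paper: insert the Fourier series of $U$ into \eqref{eq:step2_1}, interchange sum and integral (justified by absolute convergence), evaluate $\int_0^1 \E^{\I 2\pi(k_1 n + k_2 m)t}\,\D t = \delta_{k_1 n + k_2 m,\,0}$, and conclude by uniqueness of Fourier coefficients that $U_{k_1,k_2}$ vanishes on the line $k_1 n + k_2 m = 0$, which coprimality of $(n,m)$ identifies with $\{(jm,-jn) : j \in \Z\}$. The only difference is expository: you spell out the termwise-integration justification and the coprime parametrization, which the paper leaves implicit.
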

Since $(n,m) \in \mathcal{B}_0(\mathcal{S}_U^\perp)$ were arbitrary,  this proves that
\begin{equation*}
\mathcal{S}_U \subset\left(\{ 0 \} \times \Z\right) \cup \left(\Z \times \{ 0 \}\right)\,,
\end{equation*}
or equivalently $\mathcal{S}_{U,0} = \emptyset$ and we have shown Theorem \ref{thm:1}. It remains to prove Lemma \ref{lem:annihilation}. 
\begin{proof}[Proof of Lemma \ref{lem:annihilation}]
Starting from \eqref{eq:step2_1} we perform a Fourier decomposition to infer
\begin{align*} 
\sum_{ k_1, k_2 \neq 0} \left[U_{k_1, k_2} \int_{0}^{1} \E^{\I 2 \pi k_1 n t} \E^{\I 2 \pi k_2 m t} \D t\right] \E^{\I 2 \pi k_1 \theta_0^1} \E^{\I 2 \pi k_1 \theta_0^2} = 0 \qquad \forall (\theta_0^1, \theta_0^2) \in \T^2\,,
\end{align*}
which implies that 
\begin{equation*}
U_{k_1, k_2} \cdot \delta_{k_1n+ k_2m, \, 0} = 0\,. \qedhere
\end{equation*}
\end{proof} 
Applying Lemma \ref{lem:annihilation} for every $(n,m) \in \mathcal{B}_0(\mathcal{S}_U^\perp)$, we find that $\mathcal{S}_{U,0} = \emptyset$, which finishes the proof of Theorem \ref{thm:1}. \qed

\subsection{Proof of Theorem \ref{thm:2}} \label{subsec:proofthm2}
For notational simplicity, we write $\mu \equiv \mu_2 > 0$ and $V \equiv V_2 \in C^2(\T)$. \\[2mm]
{\bf \underline{Step (i).}} We fix an energy $e >0$ and consider the region of the phase space, where the subsystem in the second pair of coordinates is rotating, i.e.
\begin{equation*}
\frac{p_2^2}{2} - \mu V(x^2) = e^{(2)}>0
\end{equation*}
and for $\frac{p_1^2}{2} = e^{(1)} > 0$ we have $e = e^{(1)} + e^{(2)}$. In a neighborhood of each of the two Liouville tori characterized by $H_0 = e$ and $\frac{p_1^2}{2} = e^{(1)}$ we can find a change of variables $(x^2,p_2) = \Phi^{(2)}_\mu(\theta^2,I_2)$ (and we denote $(x^1, p_1) = (\theta^1, I_1)$) such that the Hamiltonian function $H_0$ gets transformed in action-angle coordinates (see \eqref{eq:actionanglecoordinates}), i.e.
\begin{equation*}
H_0(\theta^1, I_1, \Phi^{(2)}_\mu(\theta^2,I_2)) = \frac{I_1^2}{2} + h^{(2)}_\mu(I_2)
\end{equation*}
for some {smooth function $h^{(2)}_\mu$ agreeing with Mather's $\alpha$-function for the one-dimensional subsystem described by the Hamiltonian $\frac{p_2^2}{2} - \mu V(x^2)$ (see Appendix \ref{app:weakKAM}).}
The change in the order of the four arguments of $H_0$ should not lead to confusion. Now, the perturbed Hamiltonian takes the form
\begin{equation*}
H_\epsi(\theta^1, I_1, \Phi^{(2)}_\mu(\theta^2,I_2)) = \frac{I_1^2}{2} + h^{(2)}_\mu(I_2) + \epsi U(\theta^1, x^2(\theta^2,I_2,\mu))\,,
\end{equation*}
where we write $x^2_\mu(\theta^2,I_2)$ for the first component of $\Phi^{(2)}_\mu(\theta^2,I_2)$. 
\\[3mm]
\textbf{\underline{Step (ii).}} Assume w.l.o.g.~that the $2$-degree ${\deg}^{(2)}_U$ of $U$ is at least $1$ (recall \eqref{eq:A2}), as otherwise we had $U(x) = U_1(x^1)$ and Theorem \ref{thm:2} was proven. Then, for any $(n,m) \in \mathcal{B}_0(\mathcal{S}_U^\perp)$, in particular with $| n| \le  {\deg}_U^{(2)}$, we can find (in the isoenergy manifold $T_{e_\epsi}$ with energy $ e= e_\epsi$ and $\epsi = \epsi_k$ for some $k \in \N$) a rational invariant Liouville torus with rotation vector $\boldsymbol{\omega} = (\omega_1, \omega_2)$, which satisfies 
\begin{equation} \label{eq:omega2}
\frac{\omega_1}{\omega_2} = \frac{n}{m} \in \mathbb{Q} \quad \text{and} \quad \boldsymbol{\omega} = (c_1, \nabla h^{(2)}_\mu(c_2))
\end{equation}
for some $\boldsymbol{c} \in H^1(\T^2, \R) \cong \R^2$ with $\vert c_2 \vert > \gamma + \sqrt{\mu} \mathfrak{c}(V)$ for some $\gamma = \gamma(e, {\deg}^{(2)}_U) > 0$, which we fix now. 

By Assumption \eqref{itm:P} we have
\begin{equation*}
u_{\epsi, \cC} = u_{\cC}^{(0)} + \epsi u_{\cC}^{(1)} + \mathcal{O}_\cC(\epsi^2)
\end{equation*}
with $u_{\cC}^{(0)} = u_{0,\cC}$ and since $H_0(\theta,I)$ is integrable (and written in action-angle coordinates), one can choose $u_{0,\cC} \equiv 0$.
Therefore, by Assumption \eqref{itm:P} again, we expand the Hamilton Jacobi equation \eqref{eq:HJE} as
\begin{align*}
\alpha_{\epsi}(\cC) &= H_\epsi(\theta, \cC + \nabla u_{\epsi, \cC}(\theta)) \\
&= \frac{\vert \partial_{\theta^1} u_{\epsi, \cC}(\theta) + c_1\vert^2}{2} + h_\mu^{(2)}(\partial_{\theta^2} u_{\epsi, \cC}(\theta) + c_2)+ \epsi U(\theta^1,x^2_\mu(\theta^2, \partial_{\theta^2} u_{\epsi, \cC}(\theta) + c_2)) \\
& = \frac{c_1^2}{2} + h_\mu^{(2)}( c_2) + \epsi\left\langle \left(c_1, \nabla h_\mu^{(2)}( c_2)\right), \nabla u_{\epsi, \cC}^{(1)}(\theta) \right\rangle + \epsi U(\theta^1, x^2_\mu(\theta^2, c_2)) \\
& \hspace{1.5cm} + \mathcal{O}\left( \Vert (\nabla^2 h_\mu^{(2)})\vert_{\{ \vert c_2 \vert > \gamma + \sqrt{\mu} \mathfrak{c}(V) \}} \Vert_{C^0} \epsi^2\right) + \mathcal{O} \left( \Vert (\partial_{I_2} \Phi_\mu^{(2)})\vert_{\{ \vert c_2 \vert > \gamma + \sqrt{\mu} \mathfrak{c}(V) \}} \Vert_{C^0} \epsi^2\right)
\end{align*}
Since $|c_2| > \gamma + \sqrt{\mu} \mathfrak{c}(V)$, both error terms are of the order $\mathcal{O}_\gamma(\epsi^2)$.  

Analogously to the proof of Theorem \ref{thm:1} we thus obtain the first order equation
\begin{equation} \label{eq:firstorder2}
[U]_0 = \left\langle \left(c_1, \nabla h_\mu^{(2)}( c_2)\right), \nabla u_{\epsi, \cC}^{(1)}(\theta) \right\rangle +  U(\theta^1, x^2_\mu(\theta^2, c_2))
\end{equation}
where { the constant $\alpha^{(1)} \equiv [U]_0$ is given in \eqref{eq:alpha1} in Proposition \ref{prop:gomes} (see also \cite{gomes}).}
Just as in the proof of Theorem \ref{thm:1}, after averaging \eqref{eq:firstorder2} over the trajectory $\theta(t) = \theta_0 + \omf t \in \T^2$, with initial position $\theta_0 \in \T^2$ and where $\omf$ is chosen according to \eqref{eq:omega2}, such that the period $T_{\omf}$ satisfies $T_\omf \cdot \omf =  (n,m)$, we find
\begin{align} \label{eq:step2_2}
\int_{0}^{1} \left(U(\theta_0^1 +n t, \, x^2_\mu(\theta_0^2 + m t,c_2)) - [U]_0\right) \D t   = 0 
\end{align}
for all $\theta_0 = (\theta_0^1, \theta_0^2) \in \T^2$. 

Finally, analogously to Section \ref{subsec:proofthm1}, we may assume w.l.o.g.~$[U]_0 = 0$ and observe that
\begin{equation*}
	\int_{0}^{1} U_{\rm sep}(\theta_0^1 + nt,x^2_\mu(\theta_0^2 + m t,c_2)) \D t = [U_{\rm sep}]_0 \qquad \forall (\theta_0^1, \theta_0^2) \in \T^2
\end{equation*}
holds \emph{generally} (i.e.~independent of the first order relation \eqref{eq:firstorder2}) by a simple calculation based on \eqref{eq:alpha1} in  Proposition~\ref{prop:gomes} (see also Remark \ref{rmk:propertyP1}). We can thus split off the separable part $U_{\rm sep}$ of $U$ defined in \eqref{eq:Usep} and assume that $\mathcal{S}_U = \mathcal{S}_{U,0}$ in the following. Hence, the third step consists of showing that $\mathcal{S}_U = \mathcal{S}_{U,0} = \emptyset$. 
\\[3mm]
\textbf{\underline{Step (iii).}} We begin this final step with performing a Fourier decomposition in \eqref{eq:step2_2}, such that we obtain
	\begin{align*} 
	\sum_{ k_1\neq 0}\left[ \sum_{0 \neq |k_2| \le \deg_U^{(2)}} U_{k_1, k_2} \int_{0}^{1} \E^{\I 2 \pi k_1 n t} \E^{\I 2 \pi k_2 x_\mu^2(\theta_0^2 + mt, c_2)} \D t \right] \E^{\I 2 \pi k_1 \theta_0^1}  = 0\,, \qquad \forall (\theta_0^1, \theta_0^2) \in \T^2\,,
\end{align*}
which implies that $\big[\cdots\big] = 0$ for every $k_1 \in \Z \setminus \{ 0 \}$ and $\theta_0^2 \in \T$. 

After having eliminated $\theta_0^1 \in \T$, we now fix some $k_1 \in \Z \setminus \{ 0 \}$ and consider the family of functions $\big(f^{(k_1, \mu)}_{k_2}\big)_{0 \neq |k_2| \le \deg_U^{(2)}}$ in the Hilbert space $L^2(\T)$, where 
\begin{equation} \label{eq:lincomb2}
f_{k_2}^{(k_1, \mu)}: \T \to \C\,, \quad \theta_0^2 \mapsto \sum_{\substack{(n,m) \in \mathcal{B}_0(\mathcal{S}^\perp_U)  \\ \exists 0 \neq |\tilde{k}_2| \le \deg_U^{(2)} : \, k_1n+\tilde{k}_2m = 0}} \int_{0}^{1} \E^{\I 2 \pi k_1 n t} \E^{\I 2 \pi k_2 x_\mu^2(\theta_0^2 + mt, c_2)} \D t\,. 
\end{equation}
Note that the sum in \eqref{eq:lincomb2} is finite by Assumption \eqref{itm:A2} (more precisely, it ranges over at most $2 \cdot\deg_U^{(2)}$ elements from $\mathcal{B}_0(\mathcal{S}_U^\perp)$) and we suppressed the dependence of $|c_2| > \gamma + \sqrt{\mu} \mathfrak{c}(V)$ on $(n,m) \in \mathcal{B}_0(\mathcal{S}_U^\perp)$ from the notation (recall \eqref{eq:omega2}). 

In this way, the problem of proving Theorem \ref{thm:2}, i.e.~justifying $\mathcal{S}_{U,0} = \emptyset$, reduces to a question about linear independence for the family of functions \eqref{eq:lincomb2} in the Hilbert space $L^2(\T)$. Recall that the family $\big(f^{(k_1, \mu)}_{k_2}\big)_{0 \neq |k_2| \le \deg_U^{(2)}}$ being linearly independent is equivalent to the \emph{Gram matrix}
\begin{equation} \label{eq:Gram2}
G^{(k_1, \mu)} = \big(G_{k_2, k'_2}^{(k_1, \mu)}\big)_{0 \neq |k_2|, |k'_2| \le \deg_U^{(2)}} \quad \text{with} \quad G_{k_2, k'_2}^{(k_1, \mu)}:= \big\langle f_{k_2}^{(k_1, \mu)}, f_{k'_2}^{(k_1, \mu)} \big\rangle_{L^2(\T)}
\end{equation}
being of full rank, where $\langle g,h\rangle_{L^2(\T)}$ denotes the standard inner product of $g,h \in L^2(\T)$. 

\begin{lem} \label{lem:continuity} There exists $\tilde{\mu} = \tilde{\mu}(\mathcal{C}_2, \deg_U^{(2)}, e) > 0$ such that for all $\mu \in [0, \tilde{\mu}]$ the Gram matrix $G^{(k_1, \mu)}$ from \eqref{eq:Gram2} is of full rank. 
\end{lem}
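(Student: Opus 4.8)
The plan is to treat $G^{(k_1,\mu)}$ as a perturbation of its value at $\mu=0$ and to show that the perturbation has size $\mathcal{O}(\mu)$ \emph{uniformly in} $k_1$, so that a single threshold $\tilde\mu$ keeps all the (infinitely many) Gram matrices non-degenerate at once. Recall that by the reduction preceding the lemma it suffices to prove that the family $\bigl(f_{k_2}^{(k_1,\mu)}\bigr)$ is linearly independent, i.e.~that $G^{(k_1,\mu)}$ has full rank; the relation $\sum_{k_2}U_{k_1,k_2}\,f_{k_2}^{(k_1,\mu)}=0$ then forces $U_{k_1,k_2}=0$.

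First I would compute the unperturbed matrix explicitly. At $\mu=0$ the Hamiltonian $H_0$ is flat in the second variable, so the action-angle map is trivial and $x_0^2(\theta^2,c_2)=\theta^2$. Hence
\begin{equation*}
f_{k_2}^{(k_1,0)}(\theta_0^2)=\E^{\I 2\pi k_2\theta_0^2}\sum_{(n,m)}\int_0^1\E^{\I 2\pi(k_1n+k_2m)t}\,\D t=N_{k_1,k_2}\,\E^{\I 2\pi k_2\theta_0^2}\,,
\end{equation*}
where $N_{k_1,k_2}$ counts the coprime $(n,m)\in\mathcal{B}_0(\mathcal{S}_U^\perp)$ with $k_1n+k_2m=0$. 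Since distinct characters are orthogonal in $L^2(\T)$, the matrix $G^{(k_1,0)}$ is \emph{diagonal} with entries $|N_{k_1,k_2}|^2$. By the combinatorial observation recorded after \eqref{eq:basis0perp} (every $\boldsymbol{k}\in\mathcal{S}_{U,0}$ admits some $\boldsymbol{b}\in\mathcal{B}_0(\mathcal{S}_U^\perp)$ with $\boldsymbol{b}\cdot\boldsymbol{k}=0$), one has $N_{k_1,k_2}\ge 1$ for every index actually needed, namely those $k_2$ with $(k_1,k_2)\in\mathcal{S}_U$ (the remaining $U_{k_1,k_2}$ vanish anyway). Restricted to these indices, $G^{(k_1,0)}$ is diagonal with smallest eigenvalue $\ge 1$, uniformly in $k_1$.

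Next comes the perturbative step, for which I would invoke the perturbation-theoretic estimates on the action-angle change of variables $\Phi_\mu^{(2)}$ established in the appendix. On the admissible region $\{|c_2|>\gamma+\sqrt{\mu}\,\mathfrak{c}(V)\}$ — which, thanks to the energy constraint $c_1^2/2+h_\mu^{(2)}(c_2)=e$, is a fixed compact set bounded away from the separatrix — these yield $x_\mu^2(\theta^2,c_2)=\theta^2+\mu\,\psi_\mu(\theta^2,c_2)$ with $\Vert\psi_\mu\Vert_{C^0}\le C(\mathcal{C}_2,\gamma)$ uniformly. The decisive point making everything uniform in $k_1$ is that in each \emph{resonant} summand the large oscillatory factor cancels: using $k_1n+k_2m=0$,
\begin{equation*}
\int_0^1\E^{\I 2\pi k_1nt}\,\E^{\I 2\pi k_2 x_\mu^2(\theta_0^2+mt,c_2)}\,\D t=\E^{\I 2\pi k_2\theta_0^2}\int_0^1\E^{\I 2\pi k_2\mu\,\psi_\mu(\theta_0^2+mt,c_2)}\,\D t=\E^{\I 2\pi k_2\theta_0^2}+\mathcal{O}\bigl(\deg_U^{(2)}\mu\bigr)\,,
\end{equation*}
with a bound independent of $k_1$. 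For the remaining \emph{non-resonant} summands (at most $2\,\deg_U^{(2)}$ of them by Assumption \eqref{itm:A2}) the integral vanishes at $\mu=0$, and expanding $\E^{\I 2\pi k_2\mu\psi_\mu}=1+\mathcal{O}(\mu)$ and bounding absolutely shows it is $\mathcal{O}(\deg_U^{(2)}\mu)$ as well. Summing the finitely many contributions gives $f_{k_2}^{(k_1,\mu)}=N_{k_1,k_2}\,\E^{\I 2\pi k_2\theta_0^2}+\mathcal{O}(\mu)$ in $C^0(\T)$, hence in $L^2(\T)$, with an implied constant depending only on $\mathcal{C}_2,\deg_U^{(2)},e$.

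Consequently $\bigl|G_{k_2,k_2'}^{(k_1,\mu)}-G_{k_2,k_2'}^{(k_1,0)}\bigr|\le C(\mathcal{C}_2,\deg_U^{(2)},e)\,\mu$ uniformly in $k_1$, and since each $G^{(k_1,\mu)}$ has fixed size at most $2\,\deg_U^{(2)}$, a Gershgorin (or eigenvalue-continuity) argument shows its smallest eigenvalue stays $\ge 1-C'\mu>0$ once $\mu\le\tilde\mu(\mathcal{C}_2,\deg_U^{(2)},e)$, giving full rank for all $k_1$ simultaneously. I expect the genuine obstacle to be the \emph{uniform} perturbative control of $\Phi_\mu^{(2)}$ near (but bounded away from) the pendulum separatrix — that is, securing $C^0$-bounds on $\psi_\mu$ that degrade only controllably as $\gamma\downarrow 0$ — whereas the cancellation of the $\E^{\I 2\pi k_1nt}$ factor in the resonant terms is exactly what eliminates any dependence on the unbounded index $k_1$ and reduces the problem to a finite, $k_1$-independent estimate.
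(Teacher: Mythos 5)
Your proposal is correct and follows essentially the same route as the paper's proof: the action--angle perturbation estimate of Lemma \ref{lem:pertLemm} gives $x^2_\mu(\,\cdot\,,c_2)=\mathrm{id}+\mathcal{O}(\mu)$ uniformly on the admissible region $|c_2|>\gamma+\sqrt{\mu}\,\mathfrak{c}(V)$, the resonant pairs $\pm(k_2,-k_1)/\gcd(k_1,k_2)$ produce the diagonal main term of the Gram matrix, and the $\mathcal{O}(\deg_U^{(2)}\mu)$ errors (uniform in $k_1$ since the oscillatory factors have modulus one and there are at most $2\deg_U^{(2)}$ summands) are absorbed by a Gershgorin-type argument for small $\mu$. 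Your one refinement --- restricting the index set to those $k_2$ with $(k_1,k_2)\in\mathcal{S}_U$, where the resonance count $N_{k_1,k_2}$ equals exactly $2$ --- is in fact more careful than the paper, whose assertion that \emph{every} bracket equals $2+\mathcal{O}(\deg_U^{(2)}\mu)$ can fail when $(k_1,k_2)\notin\mathcal{S}_U$ (then no resonant pair need lie in $\mathcal{B}_0(\mathcal{S}_U^\perp)$ and the corresponding $f^{(k_1,\mu)}_{k_2}$ is only $\mathcal{O}(\mu)$); since those coefficients $U_{k_1,k_2}$ vanish anyway, both versions establish precisely what Step (iii) of the proof of Theorem \ref{thm:2} requires.
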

\begin{proof}Using the version of Lemma \ref{lem:pertLemm} for the inverse function, we find that 
	\begin{equation} \label{eq:perturb2}
		\left\Vert \E^{\I 2 \pi k_2 x^2_{\mu}( \, \cdot \,,c_2)} - \E^{\I 2 \pi k_2 \, \cdot} \right\Vert_{C^0} =  \mathcal{O}\bigg(\deg^{(2)}_U\frac{\mu \Vert V \Vert_{C^0}}{h_{\mu}(\gamma + \sqrt{\mu} \mathfrak{c}(V))}\bigg) =: \mathcal{O}\big(\mu\big)
	\end{equation}
 uniformly in $|k_2| \le \deg_U^{(2)}$ and $(n,m) \in \mathcal{B}_0(\mathcal{S}_U^\perp)$. 
 
 With a slight abuse of notation for the error term, the elements $G^{(k_1, \mu)}_{k_2, k'_2}$ of the Gram matrix can thus be computed as 
 \begin{align*}
\int_{0}^{1} \D \theta_0^2 & \left( \bigg[\sum_{(n,m)} \int_{0}^{1} \D t \, \E^{-\I 2 \pi k_1 n t} \left( \E^{-\I 2 \pi k_2 m t} + \mathcal{O}(\mu) \right) \bigg] \E^{-\I 2 \pi k_2 \theta_0^2} \ \times\right.  \\
&\hspace{3cm}\left. \times \ \E^{\I 2 \pi k'_2 \theta_0^2} \bigg[\sum_{(n',m')} \int_{0}^{1} \D t'\E^{\I 2 \pi k_1 n' t'} \left( \E^{\I 2 \pi k'_2 m' t'} + \mathcal{O}(\mu) \right) \bigg]\right)\,, 
 \end{align*}
where the summations over $(n,m)$ and $(n',m')$ are understood as in \eqref{eq:lincomb2}. Using that, for every $(k_1, k_2) \in \mathcal{S}_{U,0}$ there exist exactly two elements from $\mathcal{B}_0(\mathcal{S}_U^\perp)$ (differing by a sign), we can evaluate both brackets $\big[\cdots\big]$ being equal to $2 + \mathcal{O}\big(\deg_U^{(2)} \mu\big)$.

From this we conclude that
\begin{equation*}
G^{(k_1, \mu)}_{k_2, k'_2} = \int_{0}^{1} \D \theta_0^2  \left[2 + \mathcal{O}\big(\deg_U^{(2)} \mu\big)\right]   \E^{\I 2 \pi (k'_2-k_2) \theta_0^2} \left[2 + \mathcal{O}\big(\deg_U^{(2)} \mu\big)\right] = 4 \,  \delta_{k_2, k'_2} + \mathcal{O}\big(\deg_U^{(2)} \mu\big) \,.
\end{equation*}
Therefore, going back to \eqref{eq:perturb2}, we infer the existence of $\tilde{\mu} = \tilde{\mu}(\mathcal{C}_2, \deg_U^{(2)}, e) > 0$ such that for all $\mu \in [0, \tilde{\mu}]$ the Gram matrix $G^{(k_1, \mu)}$ from \eqref{eq:Gram2} is of full rank. 
\end{proof}
Since $k_1 \in \Z\setminus \{ 0\}$ was arbitrary and Lemma \ref{lem:continuity} is independent of $k_1$, this concludes the proof of Theorem \ref{thm:2}~(a). 

For part (b), we note that $\E^{\I 2 \pi k_2 x_\mu^2(\theta_0^2 + mt, c_2)} $ from \eqref{eq:lincomb2} depends analytically on $\mu$ (see Appendix \ref{app:AApendulum}). Therefore, the function $ \mu \mapsto G^{(k_1, \mu)}$ mapping to the Gram matrix \eqref{eq:Gram2}, for every fixed $k_1 \in \Z \setminus \{0\}$, is also analytic.\footnote{Using joint continuity of $(u,\mu) \mapsto \E^{\I 2 \pi k_2 x_\mu^2(u, c_2)}$, it is an elementary exercise to show that the integrals over $t$ and $\theta_0^2$ do not disturb the analyticity in $\mu$.} This in turn implies that  $\det(G^{(k_1, \mu)})$ is analytic in $\mu$ and thus, since $\det(G^{(k_1, \mu)}) \neq 0$ for $\mu \in (0,\tilde{\mu})$ (see Lemma \ref{lem:continuity}), we find that the zero set 
\begin{equation*}
\mathcal{E}_0^{(k_1)} := \{  \mu \in (0,\infty) \mid \det(G^{(k_1, \mu)}) = 0 \} \subset (\tilde{\mu}, \infty)
\end{equation*}
 of $\mu \mapsto \det(G^{(k_1, \mu)})$ is at most countable (finite in every compact subset), i.e.~in particular a set of zero measure. Finally, setting 
 \begin{equation*}
\mathcal{E}_0 := \bigcup_{k_1 \in \Z \setminus \{ 0\}} \mathcal{E}_0^{(k_1)}\,, 
 \end{equation*}
we constructed the exceptional null set, for which the conclusion $\mathcal{S}_{U,0} = \emptyset$ is not valid. 

This finishes the proof of Theorem \ref{thm:2}~(b). \qed

\subsection{Proof of Theorem \ref{thm:3}} \label{subsec:proofthm3}
\textbf{\underline{Step (i).}} We fix an energy $e >0$ and consider the region of the phase space, where both one-dimensional subsystems are rotating, i.e.
\begin{equation*}
\frac{p_1^2}{2} - \mu_1 V_1(x^1) = e^{(1)}>0\quad \text{and} \quad   \frac{p_2^2}{2} - \mu_2 V_2(x^2) = e^{(2)}>0\,,
\end{equation*}
such that we have $e = e^{(1)} + e^{(2)}$. In a neighborhood of each of the two Liouville tori characterized by $H_0 = e$ and $\frac{p_1^2}{2} - \mu_1 V_1(x^1)= e^{(1)}$, we can find two changes of variables $(x^1,p_1) = \Phi^{(1)}_{\mu_1}(\theta^1,I_1)$ and $(x^2,p_2) = \Phi^{(2)}_{\mu_2}(\theta^2,I_2)$  such that the Hamiltonian function $H_0$ gets transformed in action-angle coordinates (see \eqref{eq:actionanglecoordinates}), i.e.
\begin{equation*}
H_0(\Phi^{(1)}_{\mu_1}(\theta^1,I_1), \Phi^{(2)}_{\mu_2}(\theta^2,I_2)) = h^{(1)}_{\mu_1}(I_1) + h^{(2)}_{\mu_2}(I_2)
\end{equation*}
for some smooth functions $h^{(1)}_{\mu_1}$ and $h^{(2)}_{\mu_2}$,which agree with Mather's $\alpha$-functions for the one-dimensional subsystem described by the Hamiltonians $\frac{p_1^2}{2} - \mu_1 V(x^1)$ resp.~$\frac{p_2^2}{2} - \mu_2 V(x^2)$ (see Appendix \ref{app:weakKAM}).
As in the proof of Theorem~\ref{thm:2}, the change in the order of the four arguments of $H_0$ should not lead to confusion. 

Now, the perturbed Hamiltonian takes the form
\begin{equation*}
H_\epsi(\Phi^{(1)}_{\mu_1}(\theta^1,I_1), \Phi^{(2)}_{\mu_2}(\theta^2,I_2)) = h^{(1)}_{\mu_1}(I_1)+ h^{(2)}_{\mu_2}(I_2) + \epsi U(x^1_{\mu_1}(\theta^1,I_1), x^2_{\mu_2}(\theta^2,I_2))\,,
\end{equation*}
where we write $x^i_{\mu_i}(\theta^i,I_i)$ for the first component of $\Phi^{(i)}_{\mu_i}(\theta^i,I_i)$, $i \in \{1,2\}$. 
\\[3mm]
\textbf{\underline{Step (ii).}} 
Analogously to the proof of Theorem \ref{thm:2}, we assume w.l.o.g.~that the $1$- and $2$-degree ${\deg}^{(1)}_U$ and ${\deg}^{(2)}_U$ of $U$ are at least $1$ (recall \eqref{eq:A3}), as otherwise we had $U(x) = U_2(x^2)$ or $U(x) = U_1(x^1)$ and Theorem \ref{thm:3} was proven. Then, for any $(n,m) \in \mathcal{B}_0(\mathcal{S}_U^\perp)$, in particular with $|m| \le  {\deg}_U^{(1)}$ and $|n| \le  {\deg}_U^{(2)}$, we can find (in the isoenergy manifold $T_{e_\epsi}$ with energy $ e= e_\epsi$  and $\epsi = \epsi_k$ for some $k \in \N$) a rational invariant Liouville torus with rotation vector $\boldsymbol{\omega} = (\omega_1, \omega_2)$ which satisfies 
\begin{equation} \label{eq:omega3}
\frac{\omega_1}{\omega_2} = \frac{n}{m} \in \mathbb{Q} \quad \text{and} \quad \boldsymbol{\omega} = (\nabla h^{(1)}_{\mu_1}(c_1), \nabla h^{(2)}_{\mu_2}(c_2))
\end{equation}
for some $\boldsymbol{c} \in H^1(\T^2, \R) \cong \R^2$ with $\vert c_1 \vert > \gamma_1 + \sqrt{\mu_1} \mathfrak{c}(V_1)$ and $\vert c_2 \vert > \gamma_2 + \sqrt{\mu_2} \mathfrak{c}(V_2)$ for some $\gamma_1 = \gamma_1(e, {\deg}^{(1)}_U) > 0$ resp.~$\gamma_2 = \gamma_2(e, {\deg}^{(2)}_U) > 0$, which we fix now. 

By Assumption \eqref{itm:P} we have
\begin{equation*}
u_{\epsi, \cC} = u_{\cC}^{(0)} + \epsi u_{\cC}^{(1)} + O_\cC(\epsi^2)
\end{equation*}
with $u_{\cC}^{(0)} = u_{0,\cC}$ and since $H_0(\theta,I)$ is integrable (and written in action-angle coordinates), one can choose $u_{0,\cC} \equiv 0$.
Therefore, by Assumption \eqref{itm:P} again, we expand the Hamilton Jacobi equation \eqref{eq:HJE} as
\begin{align*}
\alpha_{\epsi}(\cC) &= H_\epsi(\theta, \cC + \nabla u_{\epsi, \cC}(\theta) ) \\
&= h_{\mu_1}^{(1)}(\partial_{\theta^1} u_{\epsi, \cC}(\theta) + c_1) + h_{\mu_2}^{(2)}(\partial_{\theta^2} u_{\epsi, \cC}(\theta) + c_2) \\[1mm] 
& \hspace{2cm}+ \epsi U(x^1_{\mu_1}(\theta^1, \partial_{\theta^1} u_{\epsi, \cC}(\theta) + c_1),x^2_{\mu_2}(\theta^2, \partial_{\theta^2} u_{\epsi, \cC}(\theta) + c_2)) \\[1mm]
& = \sum_{i=1}^{2}h_{\mu_i}^{(i)}( c_i) + \epsi\left\langle \left(\nabla h_{\mu_1}^{(1)}( c_1), \nabla h_{\mu_2}^{(2)}( c_2)\right), \nabla u_{\epsi, \cC}^{(1)}(\theta) \right\rangle + \epsi U(x^1_{\mu_1}(\theta^1, c_1), x^2_{\mu_2}(\theta^2, c_2)) \\
& \hspace{1cm} + \mathcal{O}\bigg( \sum_{i=1}^{2}\big(\Vert (\nabla^2 h_{\mu_i}^{(i)})\vert_{\{ \vert c_i \vert > \gamma_i + \sqrt{\mu_i} \mathfrak{c}(V_i) \}} \Vert_{C^0} + \Vert (\partial_{I_i} \Phi_{\mu_i}^{(i)})\vert_{\{ \vert c_i \vert > \gamma_i + \sqrt{\mu_i} \mathfrak{c}(V_i) \}} \Vert_{C^0} \big)\epsi^2\bigg)
\end{align*}
Since $|c_i| > \gamma_i + \sqrt{\mu_i} \mathfrak{c}(V_i)$, the error term is of order $\mathcal{O}_{\gamma_i}(\epsi^2)$.  

Analogously to the proofs of Theorem \ref{thm:1} and Theorem \ref{thm:2} we thus obtain the first order equation
\begin{equation} \label{eq:firstorder3}
[U]_0 = \left\langle \left(\nabla h_{\mu_1}^{(1)}( c_1), \nabla h_{\mu_2}^{(2)}( c_2)\right), \nabla u_{\epsi, \cC}^{(1)}(\theta) \right\rangle +  U(x^1_{\mu_1}(\theta^1, c_1), x^2_{\mu_2}(\theta^2, c_2))
\end{equation}
where { the constant $\alpha^{(1)} \equiv [U]_0$ is again given by \eqref{eq:alpha1} in Proposition \ref{prop:gomes} (see also \cite{gomes}).}
Just as in the proof of Theorem \ref{thm:1} and Theorem \ref{thm:2}, after averaging \eqref{eq:firstorder3} over the trajectory $\theta(t) = \theta_0 + \omf t \in \T^2$, with initial position $\theta_0 \in \T^2$ and where $\omf$ is chosen according to~\eqref{eq:omega3}, such that the period $T_{\omf}$ satisfies $T_\omf \cdot \omf =  (n,m)$, we find
\begin{equation} \label{eq:step2_3}
	\int_{0}^{1} \left(U(x^1_{\mu_1}(\theta_0^1 +n t, c_1), \, x^2_{\mu_2}(\theta_0^2 + m t,c_2)) - [U]_0  \right) \, \D t = 0
\end{equation}
for all $\theta_0 = (\theta_0^1, \theta_0^2) \in \T^2$. 

Finally, analogously to Section \ref{subsec:proofthm1} and Section \ref{subsec:proofthm2}, we may assume w.l.o.g.~$[U]_0 = 0$ and observe that
\begin{equation*}
	\int_{0}^{1} U_{\rm sep}(x^1_{\mu_1}(\theta_0^1 +n t, c_1), \, x^2_{\mu_2}(\theta_0^2 + m t,c_2)) \D t = [U_{\rm sep}]_0 \qquad \forall (\theta_0^1, \theta_0^2) \in \T^2
\end{equation*}
holds \emph{generally} (i.e.~independent of the first order relation \eqref{eq:firstorder3}) by a simple calculation based on \eqref{eq:alpha1} in  Proposition~\ref{prop:gomes} (see also Remark \ref{rmk:propertyP1}). We can thus split off the separable part $U_{\rm sep}$ of $U$ defined in \eqref{eq:Usep} and assume that $\mathcal{S}_U = \mathcal{S}_{U,0}$ in the following. Hence, the third step consists of showing that $\mathcal{S}_U = \mathcal{S}_{U,0} = \emptyset$. 
\\[3mm]
\textbf{\underline{Step (iii).}} We begin this final step with performing a Fourier decomposition in \eqref{eq:step2_3}, such that we obtain
\begin{align*} 
	\sum_{\substack{ 0 \neq |k_1| \le \deg_U^{(1)} \\ 0 \neq |k_2| \le \deg_U^{(2)}}} U_{k_1, k_2} \int_{0}^{1} \E^{\I 2 \pi k_1 x_{\mu_1}^1(\theta_0^1 + nt, c_1)} \E^{\I 2 \pi k_2 x_{\mu_2}^2(\theta_0^2 + mt, c_2)} \D t   = 0\,, \qquad \forall (\theta_0^1, \theta_0^2) \in \T^2\,.
\end{align*}
Analogously to the proof of Theorem \ref{thm:2}, we now consider the family of functions 
\begin{equation*}
\big(f^{(\mu_1, \mu_2)}_{k_1, k_2}\big)_{0 \neq |k_1| \le \deg_U^{(1)}, 0 \neq |k_2| \le \deg_U^{(2)}}
\end{equation*}
 in the Hilbert space $L^2(\T^2)$, where 
\begin{equation} \label{eq:lincomb3}
	f_{k_1, k_2}^{(\mu_1, \mu_2)}: \T^2 \to \C\,, \quad (\theta_0^1, \theta_0^2) \mapsto \sum_{\substack{(n,m) \in \mathcal{B}_0(\mathcal{S}^\perp_U)  }} \int_{0}^{1} \E^{\I 2 \pi k_1 x_{\mu_1}^1(\theta_0^1 + nt, c_1)}  \E^{\I 2 \pi k_2 x_\mu^2(\theta_0^2 + mt, c_2)} \D t\,. 
\end{equation}
Note that the sum in \eqref{eq:lincomb3} is finite by Assumption \eqref{itm:A3} (more precisely, it ranges over the at most $(2 \deg_U^{(1)}) \cdot (2 \deg_U^{(2)}) $ elements from $\mathcal{B}_0(\mathcal{S}_U^\perp)$) and we suppressed the dependence of $|c_i| > \gamma_i + \sqrt{\mu_i} \mathfrak{c}(V_i)$ on $(n,m) \in \mathcal{B}_0(\mathcal{S}_U^\perp)$ from the notation (recall \eqref{eq:omega3}). 

In this way, the problem of proving Theorem \ref{thm:3}, i.e.~justifying $\mathcal{S}_{U,0} = \emptyset$, reduces to a question about linear independence for the family of functions \eqref{eq:lincomb3} in the Hilbert space $L^2(\T^2)$. Recall that the family $\big(f^{( \mu)}_{k_1, k_2}\big)_{(k_1, k_2)}$ being linearly independent is equivalent to the \emph{Gram matrix} $G^{( \mu)} $ with entries 
\begin{equation} \label{eq:Gram3}
 G_{(k_1, k_2), (k'_1, k'_2)}^{(\mu_1, \mu_2)}:= \big\langle f_{k_1, k_2}^{( \mu_1, \mu_2)}, f_{k'_1, k'_2}^{(\mu_1, \mu_2)} \big\rangle_{L^2(\T^2)} \quad \text{for} \quad 0 \neq |k_i|, |k'_i| \le \deg_U^{(i)}\,, \ i \in \{1,2\}\,, 
\end{equation}
being of full rank, where $\langle g,h\rangle_{L^2(\T^2)}$ denotes the standard inner product of $g,h \in L^2(\T^2)$. 

\begin{lem} \label{lem:continuity2} There exist $\tilde{\mu}_i = \tilde{\mu}(\mathcal{C}_i,\deg_U^{(1)},  \deg_U^{(2)}, e) > 0$ such that for all $\mu_i \in [0, \tilde{\mu}_i]$, $i \in \{1,2\}$, the Gram matrix $G^{(\mu_1, \mu_2)}$ from \eqref{eq:Gram3} is of full rank. 
\end{lem}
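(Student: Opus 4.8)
The plan is to run the exact two-dimensional analogue of the proof of Lemma~\ref{lem:continuity}, now in the Hilbert space $L^2(\T^2)$ and with $\mu_1,\mu_2$ treated as joint perturbation parameters around the completely flat base point $\mu_1=\mu_2=0$. At that base point both changes of variables degenerate to the identity, $x^i_{0}(\theta^i,c_i)=\theta^i$, so that each function in \eqref{eq:lincomb3} collapses to
\[
f^{(0,0)}_{k_1,k_2}(\theta_0^1,\theta_0^2)=\E^{\I 2\pi(k_1\theta_0^1+k_2\theta_0^2)}\sum_{(n,m)\in\mathcal{B}_0(\mathcal{S}_U^\perp)}\int_0^1\E^{\I 2\pi(k_1n+k_2m)t}\,\D t=2\,\E^{\I 2\pi(k_1\theta_0^1+k_2\theta_0^2)},
\]
where the last equality uses $\int_0^1\E^{\I 2\pi(k_1n+k_2m)t}\D t=\delta_{k_1n+k_2m,0}$ together with the counting fact already employed in Lemma~\ref{lem:continuity}: for each index pair $(k_1,k_2)$ with $0\neq|k_i|\le\deg_U^{(i)}$ entering the Gram matrix, exactly two elements of $\mathcal{B}_0(\mathcal{S}_U^\perp)$ (a sign pair) are orthogonal to it. Since distinct characters are orthonormal in $L^2(\T^2)$, the base-point Gram matrix is $G^{(0,0)}=4\,\mathrm{Id}$ and is manifestly of full rank; conceptually this is just the flat situation of Lemma~\ref{lem:annihilation}.

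Next I would establish the perturbative estimate. Exactly as in \eqref{eq:perturb2}, the inverse-function version of Lemma~\ref{lem:pertLemm} gives, for $i\in\{1,2\}$,
\[
\Big\Vert\E^{\I 2\pi k_i x^i_{\mu_i}(\,\cdot\,,c_i)}-\E^{\I 2\pi k_i\,\cdot}\Big\Vert_{C^0}=\mathcal{O}\!\left(\deg_U^{(i)}\,\frac{\mu_i\Vert V_i\Vert_{C^0}}{h^{(i)}_{\mu_i}(\gamma_i+\sqrt{\mu_i}\,\mathfrak{c}(V_i))}\right)=\mathcal{O}(\mu_i),
\]
uniformly in $|k_i|\le\deg_U^{(i)}$ and over the finitely many $(n,m)\in\mathcal{B}_0(\mathcal{S}_U^\perp)$; here the lower bounds $|c_i|>\gamma_i+\sqrt{\mu_i}\,\mathfrak{c}(V_i)$ fixed in Step~(ii) keep the relevant tori a definite distance from the separatrix, which is precisely what makes the bound uniform. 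Writing each exponential factor as $\E^{\I 2\pi k_i(\theta^i+\,\cdot\,t)}\big(1+\mathcal{O}(\mu_i)\big)$ and inserting this into the single time integral of \eqref{eq:lincomb3}, the integrand reduces to its unperturbed counterpart up to a remainder $\mathcal{O}(\mu_1+\mu_2)$, so the $\mu=0$ computation above is reproduced:
\[
f^{(\mu_1,\mu_2)}_{k_1,k_2}(\theta_0^1,\theta_0^2)=2\,\E^{\I 2\pi(k_1\theta_0^1+k_2\theta_0^2)}+\mathcal{O}\big(\mu_1+\mu_2\big),
\]
where the implicit constant depends on $\mathcal{C}_1,\mathcal{C}_2$ and on $\deg_U^{(1)},\deg_U^{(2)}$ through the at most $(2\deg_U^{(1)})(2\deg_U^{(2)})$ summands.

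It then follows from the bilinearity and continuity of the $L^2(\T^2)$ inner product that
\[
G^{(\mu_1,\mu_2)}_{(k_1,k_2),(k_1',k_2')}=4\,\delta_{(k_1,k_2),(k_1',k_2')}+\mathcal{O}\big(\mu_1+\mu_2\big),
\]
so that $G^{(\mu_1,\mu_2)}=4\,\mathrm{Id}+\mathcal{O}(\mu_1+\mu_2)$ stays invertible once $\mu_1,\mu_2$ are small enough, which defines the thresholds $\tilde{\mu}_i=\tilde{\mu}_i(\mathcal{C}_i,\deg_U^{(1)},\deg_U^{(2)},e)$ and proves the lemma.

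I expect the only genuine difficulty to be the \emph{uniformity} of the displayed $C^0$-estimate: the distance between $\E^{\I 2\pi k_i x^i_{\mu_i}(\,\cdot\,,c_i)}$ and the unperturbed character must be controlled simultaneously for all $|k_i|\le\deg_U^{(i)}$ and for every one of the finitely many torus directions $(n,m)\in\mathcal{B}_0(\mathcal{S}_U^\perp)$ appearing in \eqref{eq:lincomb3}, with a rate tied to the separatrix distances $\gamma_i$; this is exactly where Lemma~\ref{lem:pertLemm} and the fixed lower bounds on $|c_i|$ are needed. Everything else — the combinatorial count of the two orthogonal elements of $\mathcal{B}_0(\mathcal{S}_U^\perp)$, the reduction of the integrand to the unperturbed one, and the passage from the entrywise estimate to invertibility of $G^{(\mu_1,\mu_2)}$ — is routine once this uniform control is in hand, and is the faithful $L^2(\T^2)$-analogue of Lemma~\ref{lem:continuity}.
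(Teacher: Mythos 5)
Your proposal is correct and follows essentially the same route as the paper's proof: the inverse-function version of Lemma~\ref{lem:pertLemm} gives the uniform $C^0$-estimate \eqref{eq:perturb3}, the two-element (sign-pair) counting in $\mathcal{B}_0(\mathcal{S}_U^\perp)$ yields the unperturbed value $2$ for each bracket, and the Gram matrix becomes $4\,\mathrm{Id}+\mathcal{O}\big(\deg_U^{(1)}\deg_U^{(2)}(\mu_1+\mu_2)\big)$, hence of full rank for $\mu_1,\mu_2$ small. The only (immaterial) difference is organizational: you first approximate the functions $f^{(\mu_1,\mu_2)}_{k_1,k_2}$ in $C^0$ and then invoke continuity of the $L^2(\T^2)$ inner product, whereas the paper substitutes the expansion directly into the Gram integrals.
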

\begin{proof}Using the version of Lemma \ref{lem:pertLemm} for the inverse function, we find that 
	\begin{equation} \label{eq:perturb3}
		\left\Vert \E^{\I 2 \pi k_i x^i_{\mu_i}( \, \cdot \,,c_i)} - \E^{\I 2 \pi k_i \, \cdot} \right\Vert_{C^0} =  \mathcal{O}\bigg(\deg^{(i)}_U\frac{\mu_i \Vert V_i \Vert_{C^0}}{h_{\mu_i}^{(i)}(\gamma_i + \sqrt{\mu_i} \mathfrak{c}(V_i))}\bigg) =: \mathcal{O}\big(\mu_i\big)
	\end{equation}
	uniformly in $|k_i| \le \deg_U^{(i)}$ and $(n,m) \in \mathcal{B}_0(\mathcal{S}_U^\perp)$. 
	
	Similarly to Lemma \ref{lem:continuity}, with a slight abuse of notation for the error term, the elements $ G_{(k_1, k_2), (k'_1, k'_2)}^{(\mu_1, \mu_2)}$ of the Gram matrix can thus be computed as 
	\begin{align*}
			\int_{0}^{1} \D \theta_0^1	\int_{0}^{1} \D \theta_0^2 & \left( \bigg[\sum_{(n,m)} \int_{0}^{1} \D t \, \left( \E^{-\I 2 \pi k_1 n t} + \mathcal{O}(\mu_1) \right) \left( \E^{-\I 2 \pi k_2 m t} + \mathcal{O}(\mu_2) \right) \bigg] \E^{-\I 2 \pi k_1 \theta_0^1}\E^{-\I 2 \pi k_2 \theta_0^2} \ \times\right.  \\
		&\left. \times \ \E^{\I 2 \pi k'_1 \theta_0^1}\E^{\I 2 \pi k'_2 \theta_0^2} \bigg[\sum_{(n',m')} \int_{0}^{1} \D t' \, \left( \E^{\I 2 \pi k'_1 n' t'} + \mathcal{O}(\mu_1) \right) \left( \E^{\I 2 \pi k'_2 m' t'} + \mathcal{O}(\mu_2) \right) \bigg]\right)\,, 
	\end{align*}
	where the summations over $(n,m)$ and $(n',m')$ are understood as in \eqref{eq:lincomb3}. Using that for every $(k_1, k_2) \in \mathcal{S}_{U,0}$ there exist exactly two elements from $\mathcal{B}_0(\mathcal{S}_U^\perp)$ (differing by a sign), we can evaluate both brackets $\big[\cdots\big]$ being given by
	\begin{equation*}
2 + \mathcal{O}\big(\deg_U^{(1)} \deg_U^{(2)}\mu_1\big)+ \mathcal{O}\big(\deg_U^{(1)}\deg_U^{(2)} \mu_2\big) =: 2 +  \mathcal{O}\big( \deg_U^{(1)} \deg_U^{(2)} (\mu_1 + \mu_2 )\big)\,, 
	\end{equation*}
after absorption of the second order error in the first order ones. 
	
	From this we conclude that
	\begin{align*}
 G_{(k_1, k_2), (k'_1, k'_2)}^{(\mu_1, \mu_2)} 
&= \int_{0}^{1} \D \theta_0^1  \int_{0}^{1} \D \theta_0^2  \bigg( \left[ 2 +  \mathcal{O}\big( \deg_U^{(1)} \deg_U^{(2)} (\mu_1 + \mu_2 )\big) \right]   \E^{\I 2 \pi (k'_1-k_1)\theta_0^1} \, \times \\
& \hspace{2.5cm} \times \,  \E^{\I 2 \pi (k'_2-k_2) \theta_0^2} \left[ 2 +  \mathcal{O}\big( \deg_U^{(1)} \deg_U^{(2)} (\mu_1 + \mu_2 )\big) \right] \bigg) \\
&= 4 \,  \delta_{k_1, k'_1} \,  \delta_{k_2, k'_2}+\mathcal{O}\big( \deg_U^{(1)} \deg_U^{(2)} (\mu_1 + \mu_2 )\big) \,.
	\end{align*}
	Therefore, going back to \eqref{eq:perturb3}, we infer the existence of $\tilde{\mu}_i = \tilde{\mu}(\mathcal{C}_i,\deg_U^{(1)},  \deg_U^{(2)}, e) > 0$, $i \in \{1,2\}$, such that for all $\mu_i \in [0, \tilde{\mu}_i]$ the Gram matrix $G^{(\mu_1, \mu_2)}$ from \eqref{eq:Gram3} is of full rank. 
\end{proof}
This finishes the proof of Theorem \ref{thm:3}~(a). For part (b), similarly to the proof of Theorem~\ref{thm:2}~(b), we observe that for every fixed $\mu_1 \in [0,\tilde{\mu}_1]$ the function $\mu_2 \mapsto \det\big(G^{(\mu_1, \mu_2)}\big)$ is analytic. Since $\det(G^{(\mu_1, \mu_2)}) \neq 0$ for $\mu_2 \in (0,\tilde{\mu}_2)$ (see Lemma \ref{lem:continuity2}), we find that the zero set 
\begin{equation*}
	\mathcal{E}_0^{(\mu_1)} := \{  \mu_2 \in (0,\infty) \mid \det(G^{(\mu_1, \mu_2)}) = 0 \} \subset (\tilde{\mu}_2, \infty)
\end{equation*}
of $\mu_2 \mapsto \det(G^{(\mu_1, \mu_2)})$ is at most countable (finite in every compact subset), i.e.~in particular a (one-dimensional) set of zero measure.

Finally, for part (c), we note that, similarly to the proof of Theorem \ref{thm:2}~(b) and by means of Hartogs's theorem on separate analyticity, the function  $(\mu_1, \mu_2) \mapsto \det\big(G^{(\mu_1, \mu_2)}\big)$ is (jointly) analytic. 
 Since $\det(G^{(\mu_1, \mu_2)}) \neq 0$ for $(\mu_1, \mu_2) \in (0,\tilde{\mu}_1) \times (0,\tilde{\mu}_2)$ (see Lemma \ref{lem:continuity2}), we find that the zero set 
\begin{equation*}
	\mathcal{E}_0 := \{  (\mu_1, \mu_2) \in (0,\infty)\times (0,\infty) \mid \det(G^{(\mu_1, \mu_2)}) = 0 \} \subset (\tilde{\mu}_1, \infty) \times (\tilde{\mu}_2, \infty)
\end{equation*}
of $(\mu_1, \mu_2) \mapsto \det(G^{(\mu_1, \mu_2)})$ is a (two-dimensional) set of zero measure. 

This concludes the proof of Theorem \ref{thm:3}~(c). \qed

\section{Concluding remarks and outlook} \label{sec:outlook}
We have shown that integrable deformations of Liouville metrics on $\T^2$ are Liouville metrics -- at least when more restrictive conditions on the unperturbed metric are balanced with more general conditions on the perturbation. Removing this balancing, i.e.~showing that \emph{arbitrary} integrable deformations of \emph{arbitrary} Liouville metrics remain of Liouville type, is an interesting problem for future investigations resolving the conjecture proposed at the end of Section \ref{sec:Mainresults}. This would require stronger versions of Lemmas \ref{lem:continuity} and~\ref{lem:continuity2} in two senses: 
\begin{itemize}
\item[(a)] Allow for possibly \emph{infinitely many non-zero Fourier coefficients} and refrain from restricting to trigonometric polynomials. A resolution of this issue has been found in the context of the \emph{perturbative Birkhoff conjecture} \cite{ADK, KS} concerning integrable billiards. Here, the authors studied the matrix of correlations between the standard basis $(\E^{\I 2 \pi k x})_{k \in \Z}$ of $L^2(\T)$ and certain deformed dynamical modes (given as some kind of Jacobi elliptic function, see Appendix \ref{app:AApendulum}), corresponding to $\E^{\I 2 \pi k_i x^i_{\mu_i}( \, \cdot \,,c_i)}$ in Lemma~\ref{lem:continuity} and Lemma~\ref{lem:continuity2}.  Exponential estimates for the entries of this matrix (obtained from considering the maximal width of a strip of analyticity around the real axis for the dynamical modes), allowed to prove a suitable full-rank lemmas, also for infinitely many coefficients. 
\item[(b)] Allow \emph{arbitrary} $\tilde{\mu_i}> 0$ and refrain from restricting to small ones. Also for this issue, a potential resolution might be found by analytically extending action-angle coordinates to the complex plane and exploiting their singularities away from the real axis. However, this requires the potentials $V_i$ in the unperturbed Hamiltonian to be restrictions of holomorphic functions and as such way more special than generic $V_i \in C^2(\T)$. 
\end{itemize}

Moreover, we note that, in \cite{KS} the authors also outlined a potential strategy for proving the classical (non-perturbative) Birkhoff conjecture, which might possibly be adapted for proving a suitably weakened version of the folklore conjecture given in Section \ref{sec:integrablemetricsonT2}.

We end this section with a brief list of open problems being related to the main results of the present paper: 
\begin{itemize}
\item[(i)]	As described above, it is an natural follow-up problem to extend our results to the situation, where \emph{arbitrary} integrable deformations of \emph{arbitrary} Liouville metrics remain of Liouville type, i.e.~remove the restricting assumptions from \eqref{itm:A1} - \eqref{itm:A3} and prove the conjecture formulated at the end of Section \ref{sec:Mainresults}. 
\item[(ii)] In particular, starting with (the time-independent version of) Arnold's example \cite{arnolddiff} for diffusion,
 \begin{equation*}
H_0(x,p) = \frac{p_1^2}{2} + \frac{p_2^2}{2} - \mu \big(1 - \cos(2 \pi x^2)\big)\,,
 \end{equation*}
is it possible do deduce rigidity, similarly to Theorem \ref{thm:2}, but without restricting to the perturbation being a trigonometric polynomial in $x^2$ and any smallness condition on $\mu \in [0,1]$? In this case, the \emph{full rank lemma} might be obtained by proving non-degeneracy of certain infinite-dimensional matrices, which have Fourier coefficients of powers of Jacobi elliptic functions (see Appendix \ref{app:AApendulum}) as their entries. 
\item[(iii)]In view of the potential counterexample constructed in \cite{corsikaloshin}, it is a major task to completely settle the \emph{Folklore Conjecture} mentioned in Sections \ref{sec:Introduction} and \ref{sec:integrablemetricsonT2}, i.e.~clarify which part is only 'folklore' and which part is 'real'. 
\item[(iv)] In particular, the main result of \cite{corsikaloshin}, which we stated in Theorem \ref{thm:corsikaloshin}, should be extended to showing that the system is really integrable on an open set in the phase space and not only on an isoenergy manifold. 
\item[(v)] For our main results, we assumed the preservation of rational invariant tori 'outside the eye of the pendulum' (cf.~Figure \ref{fig:1}). Can one obtain the same result, if only tori 'inside the eye' are preserved? 
\item[(vi)] An alternative approach to the one chosen here, could be to study perturbations of the additional first integral \eqref{eq:first int}, i.e.~write $F_\epsi = F_0 + \epsi F_1 + \mathcal{O}(\epsi^2)$ and use the vanishing of the Poisson bracket $\{H_\epsi, F_\epsi\} = 0$ with $H_\epsi = H_0 + \epsi U$ to obtain the first-order equation
\begin{equation*}
\{H_0, F_1\} + \{U, F_0\} = 0
\end{equation*}
for the perturbing potential $U$. 
\item[(vii)] Does there exist a Riemannian metric $g$ on $\T^2$, such that its geodesic flow admits hyperbolic periodic orbits of \emph{at least three} different homotopy types? If yes, does there exist a Liouville metric with this property?\footnote{These questions were suggested by Vadim Kaloshin.} 
\end{itemize} 
\noindent\textit{Acknowledgments.}
I am very grateful to Vadim Kaloshin for suggesting the topic, his guidance during this project, and many helpful comments on an earlier version of the manuscript. Moreover, I would like to thank Comlan Edmond Koudjinan and Volodymyr Riabov for interesting discussions. Partial financial support by the ERC Advanced Grant `RMTBeyond' No.~101020331 is gratefully acknowledged. 
\appendix
\addtocontents{toc}{\protect\setcounter{tocdepth}{-1}}
\section{Generalization to higher dimensions} \label{app:higher dim}
Our results from Section \ref{sec:Mainresults} immediately generalize to higher dimensions $d \ge 3$. In this setting, we define the Hamiltonian function
\begin{equation} \label{eq:H0 app}
	H_0(x,p) = \sum_{i=1}^{d}\left(\frac{p_i^2}{2}  - \mu_i\, V_i(x^i) \right)
\end{equation}
on $T^*\mathbb{T}^d$, where $\mu_i \in [0,\infty)$ are parameters, and $V_i \in C^2(\mathbb{T})$ with $\Vert V_i \Vert \le \mathcal{C}_i$, $V_i \ge 0$ are Morse functions (or constant). We may assume w.l.o.g.~that $\min_{x^i} V_i(x^i) = 0$. The system \eqref{eq:H0 app} is clearly integrable, since additional first integrals can easily be found as 
\begin{equation} \nonumber
F_i(x,p) = \frac{p_i^2}{2} - \mu_i \, V_i(x^i)\,, \qquad i \in \set{1, ... , d-1}\,. 
\end{equation}
Completely analogous to Section \ref{sec:Mainresults}, we perturb the integrable system \eqref{eq:H0 app} as $H_\epsi = H_0 + \epsi U$ with $\epsi \in \R$ by an additive potential $U \in C^2(\T^d)$, which we assume to have an absolutely convergent Fourier series. 

Now, the analogs of the assumptions in Section \ref{sec:Mainresults} read as follows. 
\\[5mm]
\textbf{1.~Assumptions on the perturbed Hamiltonian function $H_\epsi$.} \\* Let $H_0 \in C^2(T^*\T^d)$ denote the Hamiltonian function from \eqref{eq:H0 app} with $\Vert V_i \Vert \le \mathcal{C}_i$ and $\mu_i \in [0,\tilde{\mu}_i]$ for some $\tilde{\mu_i} \in [0,\infty)$, $i \in \{1,...,d\}$, and $U \in C^2(\T^d)$ be a perturbing potential, which satisfies the following assumption. 
\begin{itemize}
\item[\bf(\mylabel{itm:A4}{A4})] \label{itm:A4} If $\tilde{\mu}_i = 0$ for the first $0 \le d_{\rm flat} \le d$ indices, there exist $d^{(k)} \ge 0$ for $k \in \set{d_{\rm flat} + 1, ... , d}$ such that 
	\begin{equation} \label{eq:A4}
\mathcal{S}_U \subset \Z^{d_{\rm flat}} \times \left( [-d^{(d_{\rm flat} + 1)}, d^{(d_{\rm flat} + 1)}] \times \cdots \times [-d^{(d)}, d^{(d)}]\right)
	\end{equation}
i.e.~$U \in C^2(\mathbb{T}^d)$ is a trigonometric polynomial in the last $(d - d_{\rm flat})$ variables. 
\end{itemize}
As in Section \ref{sec:Mainresults}, we denote the minimum over all $d^{(i)}$ such that \eqref{eq:A4}  holds as $\deg_U^{(i)}$ and call it the \textit{$i$-degree of $U$}. 

Note that Proposition \ref{prop:KAM} immediately generalizes to higher dimensions, such that we can formulate the analog of Assumption \eqref{itm:P} as follows. 
\\[5mm]
\noindent\textbf{2.~Assumptions on the preserved integrability of $H_\epsi$.} \\*
Let $H_0 \in C^2(T^*\T^d)$ denote the Hamiltonian function from \eqref{eq:H0 app} satisfying Assumption~\eqref{itm:A4}, and $U$ a perturbing potential, such that the following statement concerning the perturbed Hamilton-Jacobi equation (HJE)
\begin{equation} \label{eq:HJE app}
	\alpha_{\epsi}(\cC) = H_\epsi(x, \cC + \nabla_x u_{\epsi, \cC}(x) )
\end{equation}
as well as the preserved integrability of $H_\epsi$ is satisfied. 

\begin{itemize} 
\item[\bf(\mylabel{itm:P'}{P'})] \label{itm:P'}  There exists an energy $e >0$, such that for every $\boldsymbol{b} \in \mathcal{B}_0(\mathcal{S}_U^\perp)$ (recall \eqref{eq:basis0perp}) there exists a sequence $(\epsi_k)_{k \in \N}$ with $\epsi_k \neq 0$ but $\epsi_k \to 0$ such that for any $\mu_i \in [0,\tilde{\mu}_i]$ we have the following:  
	\begin{itemize}
		\item[(i)] The $\boldsymbol{b}$-torus from (the analog of) Proposition~\ref{prop:KAM}  characterized by $\boldsymbol{c} \in H^1(\T^d, \R) \cong \R^d$ with 
		\begin{equation} \label{eq:ccond app}
			|c_i|> \sqrt{\mu_i}\, \mathfrak{c}(V_i)
		\end{equation}
		in the isoenergy submanifold $T_e$ is preserved under the sequence of deformations $(H_{\epsi_k})_{k \in \N}$, where $\mathfrak{c}(V_i)$ is defined in \eqref{eq:defc(V)}. 
		\item[(ii)] For $\cC \in H^1(\T^d, \R)$ satisfying \eqref{eq:ccond app}, Mather's $\alpha$-function and a solution $u_{\epsi,\cC}:\T^d \to \mathbb{R}$ of the HJE \eqref{eq:HJE app} can be expanded to first order in $\epsi$, i.e. 
		\begin{equation} \nonumber
			u_{\epsi, \cC} = u_{\cC}^{(0)} + \epsi u_{ \cC}^{(1)} + \mathcal{O}_\cC(\epsi^2)\quad \text{and} \quad \alpha_{\epsi} = \alpha^{(0)} + \epsi \alpha^{(1)} + \mathcal{O}(\epsi^2)\,,
		\end{equation}  
		where $u_{\cC}^{(0)} ,\, u_{ \cC}^{(1)} \in C^{1,1}(\T^d)$  and $O_\cC(\epsi^2)$ is understood in $C^{1,1}$-sense.
	\end{itemize}
\end{itemize}
We can now formulate our generalized main result. 
\begin{theorem} \label{thm:4}
Let $H_\epsi$ satisfy Assumption \eqref{itm:A4} and Assumption \eqref{itm:P'} for some energy $e > 0$.  If $V_j$ is analytic for $j \in \set{d - d_{\rm anlyt} + 1, ... , d}$, where $0 \le d_{\rm anlyt} \le d - d_{\rm flat}$, and $\tilde{\mu}_k = \tilde{\mu}_k(\mathcal{C}_k, \deg_U^{(d_{\rm flat}+1)},..., \deg_U^{(d)}, e) > 0$ for $k \in \set{d_{\rm flat} + 1, ... , d-d_{\rm anlyt}}$ are small enough, then $U$ is separable, i.e. there exist $U_1, ... , U_d \in C^2(\T)$ such that
\begin{equation*}
U(x^1, ... , x^d) = U_1(x^1) + ... + U_d(x^d) \qquad \forall (x^1, ... , x^d) \in \T^d\,. 
\end{equation*} 
This is irrespective of $\tilde{\mu}_j > 0$ for $j \in \set{d - d_{\rm anlyt} + 1, ... , d}$, but only for 
\begin{equation*}
(\mu_{d-d_{\rm anlyt}+1}, ... , \mu_d) \in [0, \tilde{\mu}_{d-d_{\rm anlyt}+1}] \times ... \times [0, \tilde{\mu}_{d}]
\end{equation*}
outside of an exceptional $d_{\rm anlyt}$-dimensional null-set (depending on $(\mu_{d_{\rm flat} + 1} , ... , \mu_{d-d_{\rm anylt}}) $). 
\end{theorem}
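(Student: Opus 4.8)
The plan is to follow verbatim the three-step strategy of the proofs of Theorems~\ref{thm:1},~\ref{thm:2}, and~\ref{thm:3}, now tracking the three types of directions simultaneously: the $d_{\rm flat}$ \emph{flat} directions, the \emph{perturbative} directions $k \in \set{d_{\rm flat}+1, \ldots, d - d_{\rm anlyt}}$ with small $\tilde\mu_k$, and the $d_{\rm anlyt}$ \emph{analytic} directions $j \in \set{d - d_{\rm anlyt}+1, \ldots, d}$. In \textbf{Step (i)} I would pass to action-angle coordinates on the region where every subsystem with $\mu_i > 0$ is rotating, i.e.~$p_i^2/2 - \mu_i V_i(x^i) = e^{(i)} > 0$ with $\sum_i e^{(i)} = e$. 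For each such $i$ one obtains $(x^i, p_i) = \Phi^{(i)}_{\mu_i}(\theta^i, I_i)$ bringing the subsystem into the form $h^{(i)}_{\mu_i}(I_i)$ (Mather's $\alpha$-function of the subsystem), while in the flat directions $(x^i, p_i) = (\theta^i, I_i)$ is already action-angle with $h^{(i)}(I_i) = I_i^2/2$. Writing $x^i_{\mu_i}(\theta^i, I_i)$ for the first component of $\Phi^{(i)}_{\mu_i}$ (and $x^i_{\mu_i}(\theta^i, I_i) = \theta^i$ in the flat case), the perturbed Hamiltonian becomes $H_\epsi = \sum_i h^{(i)}_{\mu_i}(I_i) + \epsi\, U\big(x^1_{\mu_1}(\theta^1, I_1), \ldots, x^d_{\mu_d}(\theta^d, I_d)\big)$.

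In \textbf{Step (ii)}, for each $\boldsymbol{b} = (b_1, \ldots, b_d) \in \mathcal{B}_0(\mathcal{S}_U^\perp)$ I would invoke Assumption~\eqref{itm:P'} to select a preserved rational $\boldsymbol{b}$-torus $T_\cC$ in $T_e$ with rotation vector $\omf = \big(\nabla h^{(1)}_{\mu_1}(c_1), \ldots, \nabla h^{(d)}_{\mu_d}(c_d)\big)$ parallel to $\boldsymbol{b}$, each $|c_i|$ bounded below appropriately (by $\gamma_i + \sqrt{\mu_i}\,\mathfrak{c}(V_i)$ in the non-flat directions) to keep the error terms controlled. Expanding the HJE~\eqref{eq:HJE app} to first order in $\epsi$ exactly as before, with $u^{(0)}_\cC \equiv 0$ and $\alpha^{(1)}(\cC) = [U]_0$ from Proposition~\ref{prop:gomes}, and averaging the first-order relation over the closed trajectory $\theta(t) = \theta_0 + \omf t$ (of period $T_\omf$ with $T_\omf \cdot \omf = \boldsymbol{b}$), the gradient term integrates to zero and one is left with
\begin{equation*}
\int_0^1 \Big( U\big(x^1_{\mu_1}(\theta_0^1 + b_1 t, c_1), \ldots, x^d_{\mu_d}(\theta_0^d + b_d t, c_d)\big) - [U]_0 \Big)\, \D t = 0
\end{equation*}
for all $\theta_0 \in \T^d$. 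As in the lower-dimensional cases I would then assume w.l.o.g.~$[U]_0 = 0$, split off the separable part $U_{\rm sep}$ from~\eqref{eq:Usep} — which satisfies the averaged identity \emph{identically} by Proposition~\ref{prop:gomes} — and reduce the theorem to proving $\mathcal{S}_U = \mathcal{S}_{U,0} = \emptyset$.

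In \textbf{Step (iii)} I would Fourier-decompose the averaged equation. Since the flat directions contribute undeformed modes $\E^{\I 2\pi k_i \theta_0^i}$ ($i \le d_{\rm flat}$), orthogonality in those variables fixes the flat indices and eliminates the flat coordinates \emph{exactly}, just as in the proof of Theorem~\ref{thm:1}. What remains is a linear-independence question for a finite family of functions in $L^2(\T^{d - d_{\rm flat}})$ — the analog of~\eqref{eq:lincomb2} and~\eqref{eq:lincomb3}, indexed by the bounded frequencies $0 \neq |k_i| \le \deg_U^{(i)}$ in the non-flat directions (finite by Assumption~\eqref{itm:A4}). The whole reduction then rests on showing that the associated Gram matrix $G^{(\boldsymbol{\mu})}$ is of full rank. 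Setting all perturbative and analytic parameters to their small/zero values, the perturbation estimate of Lemma~\ref{lem:pertLemm} (applied to the inverse action-angle maps, uniformly in $|k_i| \le \deg_U^{(i)}$ and in $\boldsymbol{b} \in \mathcal{B}_0(\mathcal{S}_U^\perp)$) shows, exactly as in Lemmas~\ref{lem:continuity} and~\ref{lem:continuity2}, that $G^{(\boldsymbol{\mu})}$ is a small perturbation of a nonzero multiple of the identity, hence invertible for all $\mu_k \in [0, \tilde\mu_k]$ with the $\tilde\mu_k$ ($k \le d - d_{\rm anlyt}$) small enough.

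Finally, to remove the smallness restriction in the analytic directions, I would use that each $\E^{\I 2\pi k_j x^j_{\mu_j}(\,\cdot\,, c_j)}$ depends analytically on $\mu_j$ (Appendix~\ref{app:AApendulum}), so that $(\mu_j)_{j > d - d_{\rm anlyt}} \mapsto \det G^{(\boldsymbol{\mu})}$ is jointly analytic by Hartogs's theorem on separate analyticity. As this determinant is nonzero on the small polydisc secured above, its zero set is a proper analytic variety and hence a $d_{\rm anlyt}$-dimensional Lebesgue null-set (depending on the fixed perturbative parameters), which yields separability of $U$ precisely outside this exceptional set. The main obstacle I anticipate is essentially bookkeeping: ensuring the estimate of Lemma~\ref{lem:pertLemm} is genuinely \emph{uniform} across the finitely many non-flat directions and across all $\boldsymbol{b} \in \mathcal{B}_0(\mathcal{S}_U^\perp)$ at once, so that the near-diagonal structure of $G^{(\boldsymbol{\mu})}$ survives the higher-dimensional tensor-product combination of modes. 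Once this uniformity is in place, the flat/orthogonal, perturbative, and analytic arguments combine without requiring any genuinely new idea beyond those already present in the two-dimensional proofs.
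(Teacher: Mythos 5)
Your proposal is correct and follows exactly the route the paper intends for Theorem~\ref{thm:4}: the paper gives no separate proof in Appendix~\ref{app:higher dim} precisely because the three-step strategy of Theorems~\ref{thm:1}--\ref{thm:3} (action-angle coordinates per subsystem, first-order HJE expansion and averaging over preserved rational tori, then exact orthogonality in the flat directions combined with the Gram-matrix full-rank argument of Lemmas~\ref{lem:continuity}/\ref{lem:continuity2} in the perturbative directions and analyticity of $\det G^{(\boldsymbol{\mu})}$ in the analytic directions) carries over verbatim, which is what you reconstruct. Your handling of the exceptional set via Hartogs's theorem and the null-set property of zero sets of nontrivial analytic functions matches the paper's argument in the proof of Theorem~\ref{thm:3}~(b),(c).
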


\section{Basic perturbation lemma} \label{app:pertLemm}
In this appendix, we state and prove a basic perturbation lemma, which is instrumental in the continuity arguments required for the proofs of Lemma \ref{lem:continuity} and Lemma \ref{lem:continuity2}. 
\begin{lem} \label{lem:pertLemm} Let $V \in C^1(\T)$ be a non-negative function, $\mu \in [0, 1]$, and define the Hamiltonian function
	\begin{equation} \label{eq:pertHamil}
H_\mu(p,x) = \frac{p^2}{2} - \mu V(x)
	\end{equation}
	on the cotangent bundle $T^*\T$. In the neighborhood of a fixed energy $E > 0$, we can find {\rm action-angle coordinates} $(I, \theta)$ of \eqref{eq:pertHamil} as
	\begin{equation} \label{eq:aacoord}
I = \pm \int_{0}^{1} \sqrt{2(E+ \mu V(x))} \, \D x\,, \qquad \theta = \pm  \frac{\int_{0}^{x} \frac{\D x'}{\sqrt{1+\mu V(x')/E}}}{\int_{0}^{1} \frac{\D x'}{\sqrt{1+\mu V(x')/E}}}\,.
	\end{equation}
	Regarding $\theta = \theta(x)$ as a function on $\T$, we have $\theta \in C^1(\T)$ and 
	\begin{equation} \label{eq:pertLemm}
\Vert \theta \mp x \Vert_{C^1} = \mathcal{O}\left(\frac{\mu \Vert V \Vert_{C^0}}{E}\right) \quad \text{as} \quad \mu \to 0\,.
	\end{equation}
The same holds true if we regard $x = x(\theta)$ as a function on $\T$. 
\begin{proof}
In the whole proof, we focus on the first sign choice in \eqref{eq:aacoord} and \eqref{eq:pertLemm}, the second one is completely analogous and hence omitted. 

The time-independent Hamiltonian \eqref{eq:pertHamil} is a conserved quantity along the Hamiltonian flow. By  restricting to the neighborhood of an isoenergy manifold with $E > 0$, which is topologically $\T$ and puts us in the rotating phase of the system \eqref{eq:pertHamil}, the local action-angle coordinates can be found in the following way. The action coordinate is obtained by integrating the momentum $p$ as a solution of
\begin{equation*}
\frac{p^2}{2} - \mu V(x) = E
\end{equation*} 
over a full rotation, i.e. 
\begin{equation*}
I_\mu =   \int_{0}^{1} \sqrt{2(E+ \mu V(x))} \, \D x\,.
\end{equation*}
This quantity is preserved along the Hamiltonian flow and one can express $E$ in terms of $I$ by the implicit function theorem. This allows us to calculate the time-derivative of the conjugate coordinate $\theta$ of $I$ as 
\begin{equation} \label{eq:frequency}
\dot{\theta} = \frac{\partial E}{\partial I} = \left(\frac{\partial I}{\partial E}\right)^{-1} = \left( \int_{0}^{1} \frac{\D x}{\sqrt{2(E+ \mu V(x))}} \right)^{-1} =: \omega\,.
\end{equation}
Integrating the equation of motion $\dot{x} = p = \sqrt{2(E+\mu V(x))}$ and using $\theta = \omega t$ (obtained by integrating \eqref{eq:frequency} w.r.t.~$t$ starting from $t=0$), we find $\theta = \theta(x)$ as a function of $x$ being given by 
\begin{equation*}
\theta = \omega \int_{0}^{x} \frac{\D x'}{\sqrt{2(E+\mu V(x'))}} = \frac{\int_{0}^{x} \frac{\D x'}{\sqrt{1+\mu V(x')/E}}}{\int_{0}^{1} \frac{\D x'}{\sqrt{1+\mu V(x')/E}}}\,.
\end{equation*}
From this, the approximation \eqref{eq:pertLemm} can now easily be derived by expanding the square roots using $\sqrt{1+y} = 1+\frac{y}{2} + \mathcal{O}(y^2)$ and $\frac{1}{1+y} = 1-y + \mathcal{O}(y^2)$ for $|y| \to 0$. The reversed statement for $x = x(\theta)$ is a simple consequence.
This finishes the proof of Lemma~\ref{lem:pertLemm}.
\end{proof}
\end{lem}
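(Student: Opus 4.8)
The plan is to build the action--angle variables directly from the one--dimensional phase portrait and then extract the estimate \eqref{eq:pertLemm} by a Taylor expansion in $\mu$; throughout I focus on the upper sign, the lower one being entirely analogous. First I would fix $E>0$ and note that, since $V\ge 0$, one has $E+\mu V(x)\ge E>0$ for every $x\in\T$, so the level set $\{H_\mu=E\}$ splits into the two nowhere--vanishing branches $p=\pm\sqrt{2(E+\mu V(x))}$. This places us in the \emph{rotating} regime, where $x$ increases monotonically and traverses $\T$ exactly once per period; hence the standard Arnold--Liouville construction applies in a neighbourhood of the level set and produces genuine global action--angle coordinates on $T^*\T$.

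Next I would take the action to be the loop integral $I=\oint p\,\D x=\int_0^1\sqrt{2(E+\mu V(x))}\,\D x$, which is exactly \eqref{eq:aacoord}. Because $\partial I/\partial E=\int_0^1 \D x/\sqrt{2(E+\mu V(x))}>0$, the implicit function theorem inverts $I\mapsto E(I)$ and defines the frequency $\omega=\partial E/\partial I=(\partial I/\partial E)^{-1}$. The conjugate angle then evolves linearly, $\dot\theta=\omega$, while the physical motion obeys $\dot x=\partial_p H_\mu=p=\sqrt{2(E+\mu V(x))}$. Integrating the latter from $0$ to $x$ gives the elapsed time $t(x)=\int_0^x \D x'/\sqrt{2(E+\mu V(x'))}$, and substituting $\theta=\omega\,t(x)$ yields precisely the quotient in \eqref{eq:aacoord} once the common factor $(2E)^{-1/2}$ cancels between numerator and denominator. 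Since one full period corresponds to $\Delta\theta=1$, the map $\theta(\cdot)$ descends to a map of $\T$.

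For the quantitative bound I would set $\delta:=\mu\Vert V\Vert_{C^0}/E$, so that $y(x):=\mu V(x)/E$ satisfies $|y|\le\delta$, and expand $(1+y)^{-1/2}=1-\tfrac{y}{2}+\mathcal{O}(y^2)$. Inserting this into the numerator $\int_0^x(1+y)^{-1/2}\D x'=x+\mathcal{O}(\delta)$ and the denominator $\int_0^1(1+y)^{-1/2}\D x'=1+\mathcal{O}(\delta)$ of $\theta(x)$ gives $\theta(x)=x+\mathcal{O}(\delta)$ uniformly, i.e. $\Vert\theta\mp x\Vert_{C^0}=\mathcal{O}(\delta)$. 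For the derivative I would differentiate the explicit formula directly,
\begin{equation*}
\theta'(x)=\frac{(1+\mu V(x)/E)^{-1/2}}{\int_0^1(1+\mu V(x')/E)^{-1/2}\,\D x'}=1+\mathcal{O}(\delta),
\end{equation*}
where continuity of the right--hand side already gives $\theta\in C^1(\T)$ (only $V\in C^0$ is needed for this). Combining the two estimates yields $\Vert\theta\mp x\Vert_{C^1}=\mathcal{O}(\delta)$, which is \eqref{eq:pertLemm}.

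Finally, the statement for the inverse $x=x(\theta)$ follows immediately: as $\theta'(x)=1+\mathcal{O}(\delta)$ is bounded away from zero for $\mu$ small, $\theta(\cdot)$ is a $C^1$ diffeomorphism of $\T$, and the inverse function theorem gives $x'(\theta)=\theta'(x(\theta))^{-1}=1+\mathcal{O}(\delta)$ together with $\Vert x\mp\theta\Vert_{C^0}=\mathcal{O}(\delta)$. I do not expect a genuine obstacle in this lemma; the only points that require care are verifying that one is in the rotational (rather than librational) regime, so that the period integral depends smoothly on $E$, and checking that the $\mathcal{O}(\delta)$ errors are uniform in $x$ — both of which are secured by $E>0$ and the uniform lower bound $E+\mu V\ge E$.
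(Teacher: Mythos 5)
Your proposal is correct and follows essentially the same route as the paper's own proof: the standard one-dimensional action--angle construction (action as the loop integral $\oint p\,\D x$, frequency via the implicit function theorem, angle via $\theta=\omega\, t(x)$), followed by Taylor expansion of the square roots in $\mu V/E$. You merely spell out the details the paper leaves implicit — the uniform $C^0$ and $C^1$ estimates and the passage to the inverse $x(\theta)$ via the inverse function theorem — which is a faithful completion rather than a different argument.
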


\section{Action-angle coordinates and analyticity} \label{app:AApendulum}
This appendix is concerned with analyticity properties of action-angle coordinates for one-dimensional Hamiltonian system
\begin{equation} \label{eq:pendHamil}
	H_\mu(p,x) = \frac{p^2}{2} - \mu V(x) 
\end{equation}
being defined on the cotangent bundle $T^*\T$, where $\mu$ is a positive parameter and $V\ge 0$ an analytic function. 
 Just as in Appendix \ref{app:pertLemm}, in the neighborhood of a fixed energy $E > 0$, we can find {\rm action-angle coordinates} $(I, \theta)$ of \eqref{eq:pendHamil} as
\begin{equation} \label{eq:aacoordpend}
	I = \pm \int_{0}^{1} \sqrt{2(E+ \mu V(x))} \, \D x\,, \qquad \theta = \pm  \frac{\int_{0}^{x} \frac{\D x'}{\sqrt{1+\mu V(x')/E}}}{\int_{0}^{1} \frac{\D x'}{\sqrt{1+\mu V(x')/E}}}\,. 
\end{equation}
 From now on, we shall restrict to the first sign choice in~\eqref{eq:aacoordpend}. 

 In our proofs of the analyticity cases in Theorem \ref{thm:2} and Theorem \ref{thm:3}, we shall exploit the fact that the function 
 \begin{equation} \label{eq:joint analytic}
\theta: (x, \mu) \mapsto \frac{\int_{0}^{x} \frac{\D x'}{\sqrt{1+\mu V(x')/E}}}{\int_{0}^{1} \frac{\D x'}{\sqrt{1+\mu V(x')/E}}}
 \end{equation}
is analytic in both variables. (Note that the further implicit dependence on $\mu$ via $E = E(I)$ is also analytic.) Now, for every fixed $\mu > 0$, the function $x \mapsto \theta(x, \mu)$ is analytic and invertible, and we denote its analytic inverse by $\theta \mapsto x_\mu(\theta)$ (cf.~\textbf{Step (i)} in the proofs of Theorem \ref{thm:2} and Theorem~\ref{thm:3}). Moreover, most importantly, also the function 
\begin{equation*}
(\theta, \mu) \mapsto  x_\mu(\theta)
\end{equation*}
is analytic in $\mu$, as shown in the following simple lemma applied to $f(z,w) \equiv \theta(x, \mu)$ in \eqref{eq:joint analytic}. 
\begin{lem}
Let $D_z, D_w \subset \R$ be open sets and 
\begin{equation} \label{eq:f anal}
f:D_z \times D_w \to \R\,, \quad (z,w) \mapsto f(z,w)
\end{equation}
an analytic function. Moreover, assume that the one-variable restriction $f(\cdot, w) : D_z \to \R$ is invertible and satisfies $f(D_z, w) = D$ for every fixed $w \in D_w$ and some open $D \subset \R$, such that we can write its analytic inverse function as
\begin{equation*}
f^{-1}(\cdot, w) : D \to D_z\,, \quad \zeta \mapsto f^{-1}(\zeta, w)\,. 
\end{equation*}
Then it holds that, with a slight abuse of notation, also 
\begin{equation*}
f^{-1} : D \times D_w \to D_z\,, \quad (\zeta,w) \mapsto f^{-1}(\zeta, w)
\end{equation*}
is an analytic function. 
\end{lem}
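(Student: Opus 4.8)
The plan is to reduce the claim to the real-analytic implicit function theorem applied to the defining relation of the inverse, treating $(\zeta,w)$ as parameters. I would define $F:D_z \times (D\times D_w)\to \R$ by $F(z;\zeta,w) = f(z,w)-\zeta$, which is analytic in all of its arguments since $f$ is analytic by \eqref{eq:f anal} and $\zeta$ enters linearly. Solving $F=0$ for $z$ is exactly solving $f(z,w)=\zeta$, whose unique solution is $z = f^{-1}(\zeta,w)$.

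First I would fix an arbitrary point $(\zeta_0,w_0)\in D\times D_w$ and set $z_0 := f^{-1}(\zeta_0,w_0)$, so that $F(z_0;\zeta_0,w_0)=0$. The crucial input is the non-degeneracy $\partial_z F(z_0;\zeta_0,w_0) = \partial_z f(z_0,w_0)\neq 0$. This is handed to us by the hypothesis: for the fixed parameter $w_0$ the one-variable map $f(\cdot,w_0)$ has an analytic, hence differentiable, inverse, and differentiating the identity $f(f^{-1}(\zeta,w_0),w_0)=\zeta$ at $\zeta=\zeta_0$ forces $\partial_z f(z_0,w_0)\neq 0$.

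With this non-degeneracy in hand, the real-analytic implicit function theorem yields an open neighborhood $N\subset D\times D_w$ of $(\zeta_0,w_0)$ and a real-analytic function $g:N\to D_z$ with $g(\zeta_0,w_0)=z_0$ and $f(g(\zeta,w),w)=\zeta$ for all $(\zeta,w)\in N$. Since for each fixed $w$ the equation $f(z,w)=\zeta$ has the unique solution $z=f^{-1}(\zeta,w)$, uniqueness forces $g(\zeta,w)=f^{-1}(\zeta,w)$ throughout $N$. Hence $f^{-1}$ coincides with an analytic function near $(\zeta_0,w_0)$, and as the point was arbitrary, $f^{-1}$ is jointly analytic on all of $D\times D_w$.

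The argument is essentially a bookkeeping application of a standard theorem, so I do not expect a genuine obstacle; the only point requiring care is the verification that $\partial_z f\neq 0$, which is precisely what the assumed analyticity of each one-variable inverse supplies. If one preferred to avoid invoking the real-analytic implicit function theorem directly, one could instead complexify, extending $f$ to a holomorphic function of $(z,w)$ on a complex neighborhood and applying the holomorphic implicit function theorem, but this would add no real substance.
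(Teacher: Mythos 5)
Your proof is correct, but it takes a genuinely different route from the paper. You reduce the claim to the real-analytic implicit function theorem: you extract the non-degeneracy $\partial_z f(z_0,w_0)\neq 0$ by differentiating the identity $f(f^{-1}(\zeta,w_0),w_0)=\zeta$, apply the implicit function theorem to $F(z;\zeta,w)=f(z,w)-\zeta$, and identify the resulting local analytic solution with $f^{-1}$ using the global injectivity of $f(\cdot,w)$ on $D_z$; all three steps are sound, and the gluing over arbitrary base points is legitimate since analyticity is local. The paper instead writes the inverse explicitly through Cauchy's integral formula,
\begin{equation*}
f^{-1}(\zeta,w) \;=\; \frac{1}{2\pi \I}\oint_C \frac{z\,(\partial_z f)(z,w)}{f(z,w)-\zeta}\,\D z\,,
\end{equation*}
where $C$ is a closed contour on which $|f(z,w)-\zeta|\ge \rho>0$, and then reads off joint analyticity in $(\zeta,w)$ from the joint analyticity of the integrand, invoking Hartogs's theorem. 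The two arguments rest on the same underlying non-degeneracy (analyticity of each one-variable inverse forces $\partial_z f\neq 0$, i.e.~simplicity of the relevant zero of $f(\cdot,w)-\zeta$), but yours stays entirely in the real category and delegates the analysis to a single standard cited theorem, whereas the paper's implicitly complexifies $f$ and in exchange obtains an explicit integral representation in which the parameter dependence is transparent. Your closing remark that one could complexify and use the holomorphic implicit function theorem is essentially the bridge between the two proofs.
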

\begin{proof}
Since $f^{-1}(\cdot, w) : D \to D_z$ is analytic for every fixed $w \in D_w$, it can be represented as
\begin{equation} \label{eq:Cauchy}
f^{-1}(\zeta, w) = \frac{1}{2 \pi \I} \oint_C\frac{z \cdot (\partial_z f)(z,w)}{f(z,w)-\zeta} \D z
\end{equation}
by Cauchy's integral formula, where $C$ is a closed contour for which $|f(z,w) -\zeta| \ge \rho > 0$. In this form, since $f$ from \eqref{eq:f anal} is itself analytic and by involving Hartogs's theorem, the rhs.~of \eqref{eq:Cauchy} defines (locally) an jointly analytic function in both variables $(\zeta,w)$. 
\end{proof} 
 We note that, although $\theta$ from \eqref{eq:joint analytic} is always analytic in $\mu$, the lower regularity in $x$ for a general $V \in C^2(\T)$ prevents the analyticity in $\mu$ to carry over to the inverse function.

 We conclude this appendix, by showing analyticity for the important special case of a pendulum, i.e.~$V(x) = 1 - \cos(2 \pi x)$, in a more explicit way. In this particular situation, $\theta$ can be represented as
\begin{equation} \label{eq:theta pend}
	\theta = \frac{1}{2} - \frac{F\big( \pi (\tfrac{1}{2} - x)\,|\,m_\mu \big)}{2 K(m_\mu)}\,,
\end{equation}
where we introduced the shorthand notation $m_\mu = \tfrac{2 \mu}{E + 2 \mu}$. 
Here, $F(\varphi \,|\, m)$ (resp.~$K(m)$) for $k \in [0,1)$ denotes the \emph{incomplete (resp.~complete) elliptic integral of the first kind}, i.e.~
\begin{equation} \label{eq:elliptic int}
	F(\varphi \,|\,  m) = \int_{0}^{\varphi} \frac{\D \vartheta}{\sqrt{1 - m \sin^2(\vartheta)}} \qquad \text{and} \qquad K(m) = F(\tfrac{\pi}{2}\,|\, m)\,. 
\end{equation}
The quantity $m$ is called the \emph{parameter}, $\varphi$ the \emph{amplitude}.

Now, the so-called \emph{Jacobi elliptic function} are obtained by inverting the incomplete elliptic integral \eqref{eq:elliptic int}. More precisely, if	$u := F(\varphi\,|\,m)$ denotes the \emph{argument}, and $u$ and $\varphi$ are related in this way (we also write $\varphi = \mathrm{am}(u\,|\,m)$ for the amplitude), then we define the \emph{Jacobi elliptic functions} as
\begin{equation*} 
	\mathrm{sn}(u\,|\,m) := \sin\big( \mathrm{am}(u\,|\,m) \big)\,, \qquad \mathrm{cn}(u\,|\,m) := \cos\big( \mathrm{am}(u\,|\,m) \big)\,, 
\end{equation*}
which are called the \emph{elliptic sine} and \emph{elliptic cosine}, respectively. Moreover, using the notation introduced above, we can invert the relation \eqref{eq:theta pend} to find that
\begin{equation} \label{eq:x pend}
	x = \frac{1}{2} - \frac{1}{\pi} \, \mathrm{am}\big(K(m_\mu) (1 - 2 \theta_\mu) \,|\,  m_\mu \big)\,.
\end{equation}

Most commonly, the elliptic sine and cosine are considered for fixed parameter $m$ as functions of $u$, in which way they in fact behave as \emph{elliptic functions}, i.e.~doubly-periodic meromorphic function on the complex plane. However, as a function of the parameter parameter $m$ (see \cite{Walker}), we have that
\begin{equation} \label{eq:analytic}
m \mapsto \mathrm{sn}(K(m)u\,|\,m) \quad \text{and} \quad m \mapsto \mathrm{cn}(K(m)u\,|\,m)
\end{equation}
are analytic for $m \in \C \setminus [1,\infty)$ and fixed $u \in \R$. This easily follows by representing $\mathrm{sn}(K(m)u\,|\,m)$ and $\mathrm{sn}(K(m)u\,|\,m)$ as ratios of \emph{Jacobi theta functions} \cite[Eq.~16.36.3]{AbraSteg} (see also \cite[Eq.~5]{SieglStampach}), whose zeros are known explicitly \cite[Eq.~16.36.2]{AbraSteg}.

\section{Weak KAM theory} \label{app:weakKAM} 
In this appendix, we provide an overview on basic concepts and results of weak KAM theory and Aubry-Mather theory, which are relevant in the proofs of our main results. In particular, we discuss separable Hamiltonian systems on $T^*\T^2$, i.e.~sums of two independent systems on $T^*\T$. The presentation partly follows lecture notes from Sorrentino \cite{SorrentinoLecNotes}, which build on seminal works from Mather \cite{Mather0, Mather1, Mather2}, Aubry \cite{Aubry}, Ma\~né \cite{mane}, Fathi \cite{fathiorig, fathi} and others. 

In the following, let $(M,g)$ be a compact and connected smooth Riemannian manifold without boundary, e.g.~the torus $\T^2$. As in Section~\ref{subsec:geoflow}, $TM$ denotes its tangent bundle and $T^*M$ its cotangent bundle. While a point in $TM$ is denoted by $(x,v)$, where $x \in M$ and $v \in T_xM$, a point in $T^*M$ is denoted by $(x,p)$, where $x \in M$ and $p \in T^*_xM$ is a linear functional on $T_xM$. The Riemannian metric $g$ induces a metric $d$ on $M$ as well as a norm $\Vert \cdot \Vert_x$ on $T_xM$. We shall use the same notation for the norm induced on $T^*_xM$. The standard assumptions on a Hamiltonian $H: T^*M \to \R$ are summarized as follows. 
\begin{defi} {\rm (Tonelli Hamiltonians)} \label{def:Tonelli}\\
A function $H : T^*M \to \R$ is called a {\it Tonelli Hamiltonian} if and only if $H$ is (i) of class $C^2$; (ii) strictly convex in each fiber in $C^2$-sense, i.e.~the quadratic form $(\partial^2H/\partial p^2)(x,p)$ is positive definite for any $(x,p) \in T^*M$; (iii) superlinear in each fiber, i.e.~$\lim\limits_{\Vert p \Vert_x \to \infty} \frac{H(x,p)}{\Vert p \Vert_x} = \infty$. 
\end{defi}
A Hamiltonian $H:T^*M \to \R$ is canonically associated to a Lagrangian $L:TM \to \R$ as being each others Fenchel-Legendre transforms $(x,p) = \mathcal{L}(x,v)$, i.e.
\begin{alignat*}{3}
H : & \ T^*M \to \R, \quad &&(x,p) \mapsto \sup_{v \in T_xM} &&\big[ \langle p, v \rangle_x - L(x,v)   \big]\,, \\
L : &\ TM \to \R, \quad &&(x,v) \mapsto \sup_{p \in T^*_xM} &&\big[ \langle p, v \rangle_x - H(x,p)   \big]\,.
\end{alignat*}
It is easy to check that the Lagrangian associated to a Tonelli Hamiltonian is also of Tonelli type (defined analogously to Definition \ref{def:Tonelli}). 

Piecewise $C^1$ curves $\gamma:[0,1] \to M$, which minimize the action functional 
\begin{equation*}
A_L(\gamma) := \int_{0}^{1} L(\gamma(t), \dot{\gamma}(t)) \D t\,,
\end{equation*}
satisfy the associated Euler-Lagrange equation
\begin{equation*}
\frac{\D}{\D t} \frac{\partial L}{\partial v} (\gamma(t), \dot{\gamma}(t)) = \frac{\partial L}{\partial x} (\gamma(t), \dot{\gamma}(t))\,, \qquad t \in [0,1]\,.
\end{equation*}
In case that $\det \frac{\partial^2L}{\partial v^2} \neq 0$ (Legendre condition), the Euler-Lagrange equation is equivalent to an equation, which can be solved for $\ddot{\gamma}(t)$, which allows to define a vector field $X_L$ on $TM$, such that the solutions of $\ddot{\gamma}(t) = X_L(\gamma(t), \dot{\gamma}(t))$ precisely satisfy the Euler-Lagrange equation. The associated flow $\Phi_t^L$ is called the \emph{Euler-Lagrange flow}, which is $C^1$ for $L$ of class $C^2$. 
\subsection{Basic notions of Aubry-Mather theory} \label{subsec:AubryMather}
The central objects of study in Aubry-Mather theory are invariant probability measures on $TM$ having finite average action, 
\begin{equation*} 
\mathfrak{M}(L):= \left\{  \nu \ \text{prob. meas. on} \ TM \ \text{with} \ \nu \circ \Phi_t^L = \nu\,, \, \forall t \in \R\,, \ \int_{TM} L \, \D \nu < \infty  \right\}\,,
\end{equation*}
which shall be endowed with the vague topology, i.e.~the weak$^*$ topology induced by the continuous functions $f:TM \to \R$ having at most linear growth. 
\begin{prop}
Every non-empty energy level $\{ H \circ \mathcal{L}(x,v) = E \}$ contains at least one invariant probability measure of $\Phi_t^L$, i.e.~$\mathfrak{M}(L) \neq \emptyset$. 
\end{prop}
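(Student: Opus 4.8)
The plan is to use that the energy $H\circ\mathcal L$ is a first integral of the Euler--Lagrange flow $\Phi_t^L$, so that the level set $\Sigma_E := \{(x,v)\in TM : H\circ\mathcal L(x,v) = E\}$ is $\Phi_t^L$-invariant, and then to run the classical Krylov--Bogolyubov averaging construction on $\Sigma_E$. The only delicate point is that $TM$ is non-compact; the Tonelli hypotheses, however, force $\Sigma_E$ itself to be compact, and once this is known the standard argument produces an invariant probability measure supported on $\Sigma_E$, which automatically has finite average action.

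First I would prove compactness of $\Sigma_E$. Writing $p = \frac{\partial L}{\partial v}(x,v)$, the Fenchel--Legendre duality gives $H\circ\mathcal L(x,v) = \langle p, v\rangle_x - L(x,v) = H(x,p)$, and since $\mathcal L$ is a fiberwise diffeomorphism, superlinearity of the Tonelli Hamiltonian $H$ (together with compactness of $M$) yields $H\circ\mathcal L(x,v) \to \infty$ as $\|v\|_x \to \infty$, uniformly in $x\in M$. Hence each sublevel set $\{H\circ\mathcal L \le c\}$ is fiberwise bounded and closed, thus compact because $M$ is compact; in particular $\Sigma_E$, being closed and contained in such a set, is compact. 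As a consequence every orbit through a point of $\Sigma_E$ stays in this compact set, so $\Phi_t^L$ is complete on $\Sigma_E$ and restricts there to a continuous flow.

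Next I would carry out Krylov--Bogolyubov. Fixing any $(x_0,v_0)\in\Sigma_E$ — which exists by the non-emptiness assumption — consider the time-averaged probability measures
\begin{equation*}
\nu_T := \frac{1}{T} \int_0^T \big(\Phi_t^L\big)_* \delta_{(x_0,v_0)} \, \D t\,, \qquad T > 0\,,
\end{equation*}
each supported on $\Sigma_E$. By weak-$*$ (vague) compactness of the space of Borel probability measures on the compact metric space $\Sigma_E$, some sequence $\nu_{T_n}$ with $T_n\to\infty$ converges to a probability measure $\nu$ (no mass escapes, since everything is supported on the compact $\Sigma_E$). Testing against an arbitrary $f\in C(\Sigma_E)$ and using that $\frac{1}{T_n}\big(\int_0^{T_n} f(\Phi_{t+s}^L(x_0,v_0))\,\D t - \int_0^{T_n} f(\Phi_t^L(x_0,v_0))\,\D t\big)\to 0$ for every fixed $s$ shows that $\nu$ is $\Phi_t^L$-invariant.

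Finally, since $\nu$ is supported on the compact set $\Sigma_E$ and $L$ is continuous, $L$ is bounded on the support, so $\int_{TM} L\,\D\nu < \infty$; together with invariance this gives $\nu \in \mathfrak M(L)$, proving $\mathfrak M(L)\neq\emptyset$. I expect the compactness of $\Sigma_E$ to be the main (indeed essentially the only) obstacle: it is precisely the superlinearity built into the Tonelli condition that prevents the averaged measures from leaking mass to infinity along the non-compact fibers, and without it the limiting $\nu$ could fail to be a probability measure.
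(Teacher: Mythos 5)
Your proof is correct. The paper itself states this proposition without proof, quoting it from the Aubry--Mather/weak KAM literature it reviews (cf.~the cited lecture notes of Sorrentino), and your argument --- compactness of the energy level, obtained from properness of the Legendre transform and superlinearity together with compactness of $M$, followed by the Krylov--Bogolyubov averaging construction on that compact invariant set --- is exactly the standard proof given there, including the two points that actually need care (completeness of the restricted flow and finiteness of the average action, both of which you correctly reduce to compactness of the level set).
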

For every $\nu \in \mathfrak{M}(L)$ we now define its average action
\begin{equation*}
A_L(\nu) := \int_{TM} L \D \nu\,.
\end{equation*}
Since $A_L: \mathfrak{M}(L) \to \R$ is lower semicontinuous w.r.t.~the vague topology on $\mathfrak{M}(L)$, we have the following.
\begin{prop}
There exists $\nu \in \mathfrak{M}(L)$, which minimizes $A_L$ over $\mathfrak{M}(L)$. 
\end{prop}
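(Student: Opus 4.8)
The plan is to apply the direct method of the calculus of variations. Since the preceding proposition guarantees $\mathfrak{M}(L) \neq \emptyset$ and the lower semicontinuity of $A_L$ with respect to the vague topology has just been recorded, the only genuine task is to produce a minimizing sequence whose limit remains inside $\mathfrak{M}(L)$. This rests entirely on a compactness argument exploiting the superlinearity of the Tonelli Lagrangian $L$.

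First I would observe that $A_L$ is bounded below. Since $L$ is continuous and superlinear in each fiber, there is a constant $C_0$ with $L(x,v) \geq C_0$ for all $(x,v) \in TM$, whence $A_L(\nu) \geq C_0 > -\infty$ for every $\nu \in \mathfrak{M}(L)$. Thus $c := \inf_{\nu \in \mathfrak{M}(L)} A_L(\nu)$ is finite, and I may fix a minimizing sequence $(\nu_n)_{n}\subset \mathfrak{M}(L)$ with $A_L(\nu_n) \to c$.

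The crucial step is a tightness estimate. Superlinearity provides, for each $A > 0$, a constant $B_A$ such that $L(x,v) \geq A \Vert v \Vert_x - B_A$ on all of $TM$. Integrating against $\nu_n$ gives
\[
\int_{TM} \Vert v \Vert_x \, \D \nu_n \leq \frac{A_L(\nu_n) + B_A}{A}\,,
\]
so the first moments of the $\nu_n$ are uniformly bounded (the right-hand side is uniformly controlled once $A_L(\nu_n)$ is close to $c$). By Markov's inequality this bounds $\nu_n(\{\Vert v \Vert_x > R\})$ uniformly by a constant times $1/R$, which is precisely tightness of the family $(\nu_n)$. Prokhorov's theorem then yields a subsequence converging weakly to a probability measure $\nu$ on $TM$; the uniform first-moment bound moreover ensures that no mass escapes along the non-compact fibers and that testing against continuous functions of at most linear growth is legitimate, i.e.~convergence holds in the vague topology used to define $\mathfrak{M}(L)$.

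Finally I would verify that the limit $\nu$ lies in $\mathfrak{M}(L)$ and realizes the minimum. Invariance passes to the limit because $\Phi_t^L$ is continuous (being the $C^1$ Euler-Lagrange flow): from $\nu_n \circ \Phi_t^L = \nu_n$ one obtains $\nu \circ \Phi_t^L = \nu$ for every fixed $t$. Finiteness of the average action of $\nu$, together with the inequality
\[
A_L(\nu) \leq \liminf_{k \to \infty} A_L(\nu_{n_k}) = c\,,
\]
follows from lower semicontinuity. Since $\nu \in \mathfrak{M}(L)$ forces $A_L(\nu) \geq c$, equality holds and $\nu$ is the desired minimizer. The main obstacle throughout is the compactness step: because $TM$ is non-compact in the fiber directions, lower semicontinuity alone does not suffice, and one must invoke superlinearity to rule out loss of mass at infinity and to reconcile the linear-growth test functions of the vague topology with the first-moment bounds.
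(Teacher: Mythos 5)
Your overall strategy---the direct method of the calculus of variations, with compactness extracted from superlinearity---is exactly the standard argument for this proposition. The paper itself offers no proof: the statement is quoted from Sorrentino's lecture notes and is prefaced only by the remark that $A_L$ is lower semicontinuous in the vague topology, so your proposal fills in precisely the argument that citation points to. Most of your steps are sound: boundedness below of $L$, the minimizing sequence, tightness via Markov's inequality, Prokhorov, and the passage of invariance and of the action inequality to the limit.

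One step, however, fails as you justify it. You assert that the uniform bound on the first moments $\int_{TM}\Vert v\Vert_x\,\D\nu_n$ ``ensures \ldots\ that testing against continuous functions of at most linear growth is legitimate,'' i.e.\ that the narrow limit produced by Prokhorov is also a limit in the vague topology. That implication is false: a uniform first-moment bound gives tightness but \emph{not} the uniform integrability needed for linear-growth test functions. Concretely, on the flat torus with $L(x,v)=\Vert v\Vert^2/2$, let $\lambda_w$ denote the invariant probability measure spread uniformly over the section $\T\times\{w\}$ and set $\nu_n=(1-\tfrac1n)\lambda_0+\tfrac1n\lambda_n$; each $\nu_n$ is invariant with finite action, $\int\Vert v\Vert\,\D\nu_n=1$ for all $n$, and $\nu_n\to\lambda_0$ narrowly, yet $\int\Vert v\Vert\,\D\lambda_0=0$, so vague convergence fails. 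What rescues your proof is that your minimizing sequence also has \emph{uniformly bounded action}, and the action bound together with superlinearity at arbitrary slope does yield uniform integrability: for every $A>0$,
\begin{equation*}
\int_{\{\Vert v\Vert_x>R\}}\Vert v\Vert_x\,\D\nu_n\;\le\;\frac1A\Bigl(A_L(\nu_n)-C_0+\bigl(|C_0|+B_A\bigr)\,\nu_n\bigl(\Vert v\Vert_x>R\bigr)\Bigr)\,,
\end{equation*}
so that $\limsup_{R\to\infty}\sup_n\int_{\{\Vert v\Vert_x>R\}}\Vert v\Vert_x\,\D\nu_n\le\bigl(\sup_n A_L(\nu_n)-C_0\bigr)/A$ for every $A$, hence vanishes; note that the counterexample above has unbounded action, which is exactly why it escapes this estimate. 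Alternatively, you can bypass the vague topology altogether: since $L$ is continuous and bounded below, $A_L(\nu)=\sup_k\int\min(L,k)\,\D\nu$ is a supremum of narrowly continuous functionals and is therefore lower semicontinuous already along narrow convergence, which is all the Prokhorov limit provides. With either repair the remainder of your argument goes through.
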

A measure $\nu \in \mathfrak{M}(L)$ minimizing $A_L$ is called an action-minimizing measure of $L$. The principal goal in Aubry-Mather theory is to characterize invariant sets of the dynamics via action minimizing measures. Since -- at least for integrable systems -- the phase space is foliated by invariant tori (cf.~Theorem \ref{thm:liouvillearnold}) and minimizing a single functional will not be sufficient to characterize all of them, one considers certain modifications of the Lagrangian: Let $\eta$ be a $1$-form on $M$ and interpret it as a functional on the tangent space as
\begin{equation*}
\hat{\eta} : TM \to \R\,, \ (x,v) \mapsto \langle \eta(x), v\rangle_x\,.
\end{equation*}
One can easily verify that, if $\eta$ is closed (i.e.~$\D \eta = 0$), then $L$ and $L_\eta := L - \hat{\eta}$ have the same Euler-Lagrange flow. Moreover, if $\eta = \D f$ is an exact $1$-form, then $\int \widehat{\D f} \D \nu = 0$ for any $\nu \in \mathfrak{M}(L)$. Therefore, for fixed $L$, the minimizing measures of $L_\eta$ will depend only on the de Rham cohomology class $\cC = [\eta] \in H^1(M, \R)$. Hereafter, $\eta_c$ shall denote a closed $1$-form with cohomology class $\cC \in H^1(M,\R)$. 

\begin{defi}We define Mather measures, Mather's $\alpha$-function, and Mather sets as follows:
	\begin{itemize}
\item 	If $\nu \in \mathfrak{M}(L)$ minimizes $A_{L_{\eta_\cC}}$, we call $\nu$ a {\it Mather measure with cohomology~$\cC$}. 
\item  The map
\begin{equation} \label{eq:alphafunc}
\alpha : H^1(M,\R) \to \R\,, \ \cC \mapsto - \min_{\nu \in \mathfrak{M}(L)} A_{L_{\eta_\cC}} (\nu)
\end{equation}
is called \textit{Mather's $\alpha$-function}. It is well defined and easily seen to be convex. 
\item For $\cC \in H^1(M,\R)$, we define the \textit{Mather set of cohomology class $\cC$ as}
\begin{equation*} 
\widetilde{\mathcal{M}}_\cC := \bigcup_{\nu \in \mathfrak{M}_\cC(L)} \mathrm{supp} \, \nu \subset TM\,,
\end{equation*}
where we denoted $\mathfrak{M}_\cC(L) := \{ \nu \in \mathfrak{M}_L \ : \ A_{L_{\eta_\cC}}(\nu) = - \alpha(\cC) \}$. The projection $\mathcal{M}_\cC = \pi \big(  \widetilde{\mathcal{M}}_\cC\big) \subset M$ on the base manifold is called the \textit{projected Mather set with cohomology class $\cC$}. 
	\end{itemize}
\end{defi}
By duality, one can also define Mather's $\beta$-function: Since $\int_{TM} \hat{\eta} \D \nu = 0$ for exact $1$-forms $\eta$, the linear functional
\begin{equation*}
H^1(M, \R) \to \R\,, \ \cC \mapsto \int_{TM}\hat{\eta_\cC} \D \nu
\end{equation*} 
is well defined. By duality, there exists $\boldsymbol{\omega}(\nu) \in H_1(M,\R)$ such that 
\begin{equation*}
\int_{TM} \hat{\eta}_{\cC} \D \nu = \langle \cC, \boldsymbol{\omega}(\nu) \rangle \qquad \forall \cC \in H^1(M,\R)\,,
\end{equation*}
where we call $\boldsymbol{\omega}(\nu)$ the rotation vector of $\nu$, which will turn to out to be matching the earlier definition in Proposition \ref{prop:KAM}. The map $\boldsymbol{\omega}: \mathfrak{M}_L \to H_1(M, \R)$ is continuous, affine linear and surjective. In combination with the lower semicontinuity of $A_L$, this shows that Mather's $\beta$-function
\begin{equation} \label{eq:betafunc}
\beta : H_1(M, \R) \to \R\,, \ \boldsymbol{h} \mapsto \min_{\nu \in \mathfrak{M}(L) : \boldsymbol{\omega}(\nu) = \boldsymbol{h}} A_L(\nu)
\end{equation}
is well defined. It can easily seen to be the convex and, in fact, being the convex conjugate (Fenchel transform) of the $\alpha$-function, showing that both, $\alpha$ and $\beta$, have superlinear growth. 

We will see below, that the Liouville tori $T_\cC$ with $|c_i| > \sqrt{\mu_i} \mathfrak{c}(V_i)$ from Proposition~\ref{prop:KAM} agree with the Mather set of cohomology class $\cC \in H^1(\T^2, \R) \cong \R^2$, i.e.~$\widetilde{\mathcal{M}}_\cC = \mathcal{L}^{-1}(T_\cC)$. Basically, this will be concluded from the following two fundamental results. 
\begin{thm} {\rm (Mather's graph Theorem \cite{Mather1})} \label{thm:mathergraph}\\
The Mather set $\widetilde{\mathcal{M}}_\cC$ is compact, invariant under the Euler-Lagrange flow and $\pi \vert_{\widetilde{\mathcal{M}}_\cC}: \widetilde{\mathcal{M}}_\cC \to M$ is an injective map, whose inverse $\pi^{-1} : \mathcal{M}_\cC \to \widetilde{\mathcal{M}}_\cC$ is Lipschitz.  
\end{thm}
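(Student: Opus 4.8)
The plan is to establish the three assertions in order of increasing depth. By definition every Mather measure $\nu \in \mathfrak{M}_\cC(L)$ is invariant under the Euler--Lagrange flow $\Phi_t^L$, so each $\mathrm{supp}\,\nu$, and hence their union $\widetilde{\mathcal{M}}_\cC$, is $\Phi_t^L$-invariant. For compactness I would use the standard fact that all action-minimizing measures of a fixed cohomology class are carried by the single energy level $\{H\circ\mathcal{L} = \alpha(\cC)\}$, which is a compact subset of $TM$ because $L$ is Tonelli on the compact manifold $M$. The family $\mathfrak{M}_\cC(L)$ is closed in the vague topology --- it is the set of minimizers of the lower semicontinuous functional $A_{L_{\eta_\cC}}$ over the vaguely compact set of invariant probability measures supported on that level --- and a short upper-semicontinuity argument then shows that the union of the supports of such a compact family of measures, all living in a fixed compact set, is itself compact.

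\emph{The graph property via weak KAM solutions.} First I would record the structural fact underlying everything: every orbit contained in $\widetilde{\mathcal{M}}_\cC$ is calibrated by the critical subsolutions of the Hamilton--Jacobi equation $H(x,\cC+\nabla u(x)) = \alpha(\cC)$, i.e.\ there are backward and forward weak KAM solutions $u_\cC^-$ and $u_\cC^+$ such that the base curve $\gamma(t)=\pi(\Phi_t^L(x,v))$ of any $(x,v)\in\widetilde{\mathcal{M}}_\cC$ realizes equality in the defining inequality of both. The decisive regularity input is that $u_\cC^-$ is semiconcave while $u_\cC^+$ is semiconvex, and that the two coincide (up to a constant) on the projected Mather set $\mathcal{M}_\cC$; a function that is simultaneously semiconcave and semiconvex is of class $C^{1,1}$ there, so $u_\cC$ is differentiable on $\mathcal{M}_\cC$ with Lipschitz gradient. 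At a point $x\in\mathcal{M}_\cC$ the velocity of the calibrated orbit is then \emph{uniquely} recovered through the (inverse) Legendre transform as
\begin{equation*}
v = \partial_p H\big(x,\cC+\nabla u_\cC(x)\big)\,,
\end{equation*}
which forces $\pi|_{\widetilde{\mathcal{M}}_\cC}$ to be injective; and since $\nabla u_\cC$ is Lipschitz on $\mathcal{M}_\cC$ and $\partial_p H$ is $C^1$, the inverse map $x\mapsto\pi^{-1}(x)$ is Lipschitz, which is exactly the claim.

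\emph{Main obstacle and Mather's original route.} The genuinely hard step is the regularity dichotomy just used: proving that the two one-sided weak KAM solutions exist, that they are respectively semiconcave and semiconvex with a uniform constant, and that they agree on $\mathcal{M}_\cC$ (equivalently, that calibrated curves through points of the projected set are two-sided, so that $u_\cC$ is pinched between a $C^{1,1}$ upper and lower support). Alternatively, one can follow Mather's original purely variational argument: given two points $(x_0,v_0),(x_1,v_1)\in\widetilde{\mathcal{M}}_\cC$, exchange backward portions of the two globally minimizing orbits through them and bridge the endpoint mismatch near $t=0$ by short segments of length $\mathcal{O}(d(x_0,x_1))$; global minimality yields an inequality whose leading-order content, after a Taylor expansion exploiting the \emph{strict} fibrewise convexity of $L$ (the Tonelli condition $\partial^2 H/\partial p^2>0$), reads
\begin{equation*}
c\,\Vert v_0-v_1\Vert^2 \ \le\ C\, d(x_0,x_1)\,\Vert v_0-v_1\Vert\,,
\end{equation*}
giving the Lipschitz bound $\Vert v_0-v_1\Vert\le (C/c)\,d(x_0,x_1)$ with constants uniform on the compact Mather set (the a priori velocity bounds needed here come from superlinearity of $L$). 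The case $x_0=x_1$ again yields injectivity. Both routes are consistent with, and in the integrable situation sharpened by, the $C^{1,1}$ (in fact $C^3$) regularity of the Lagrangian graphs asserted in Proposition~\ref{prop:KAM}.
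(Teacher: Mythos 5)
The paper does not prove Theorem \ref{thm:mathergraph}; it quotes it verbatim from \cite{Mather1} as background for the weak KAM appendix. So your proposal can only be compared with the standard proofs in the literature, and measured that way it is essentially a faithful assembly of \emph{both} known routes: the calibration/weak-KAM argument (conjugate pair $(u^-_\cC,u^+_\cC)$, semiconcavity versus semiconvexity, recovery of the velocity via $v=\partial_p H(x,\cC+\nabla u_\cC(x))$) is Fathi's proof, while the curve-exchange argument with the quadratic-gain-versus-linear-cost comparison is Mather's original one -- the proof behind the paper's citation. Your invariance argument is correct, and confining everything to the compact energy level $\{H\circ\mathcal{L}=\alpha(\cC)\}$ via Carneiro's theorem (Theorem \ref{thm:carneiro} in the paper) is the right move.

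Two points need repair. The genuine gap is the compactness step: supports of measures are \emph{not} upper semicontinuous under vague convergence, and the union of the supports of a vaguely compact family of measures need not be closed. For instance, on $\R$ the family $\nu_t=(1-t)\,\delta_0+t\,\delta_{1/2+t}$, $t\in[0,1]$, is vaguely compact, yet the union of supports $\{0\}\cup(1/2,\,3/2]$ is not closed. The standard fix is different: since $\mathfrak{M}_\cC(L)$ is compact, convex and metrizable in the vague topology, choose a countable dense subset $\{\nu_n\}$ and set $\bar\nu=\sum_n 2^{-n}\nu_n$; then $\bar\nu\in\mathfrak{M}_\cC(L)$ because the action functional is affine, and $\mathrm{supp}\,\bar\nu=\widetilde{\mathcal{M}}_\cC$ (the inclusion $\mathrm{supp}\,\nu\subset\mathrm{supp}\,\bar\nu$ for an arbitrary minimizer $\nu$ follows from the portmanteau theorem applied to a sequence $\nu_{n_k}\to\nu$). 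Thus the Mather set is the support of a \emph{single} Mather measure, hence closed, hence compact. The second, smaller issue is the phrase ``a function that is simultaneously semiconcave and semiconvex is of class $C^{1,1}$ there'': this implication is meaningful only on open sets, whereas the projected Mather set $\mathcal{M}_\cC$ is closed and typically of measure zero. What is actually needed is the coincidence-set lemma: if $u^+\le u^-$ with $u^+$ semiconvex and $u^-$ semiconcave with constant $K$, then at every point of $\{u^+=u^-\}$ both functions are differentiable with the same derivative, and this common derivative is Lipschitz \emph{on the coincidence set} with constant controlled by $K$. Your later remark about the graph being ``pinched between a $C^{1,1}$ upper and lower support'' shows you have this in mind; with these two corrections the proposal matches the known proofs.
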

\begin{thm} {\rm (Carneiro \cite{carneiro})} \label{thm:carneiro}\\
The Mather set $\widetilde{\mathcal{M}}_\cC$ is contained in the energy level $\{ H \circ \mathcal{L}(x,v) = \alpha(\cC) \}$. 
\end{thm}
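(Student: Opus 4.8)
The plan is to prove the equivalent pointwise statement that the energy $E := H\circ\mathcal{L}$ equals the constant $\alpha(\cC)$ everywhere on the Mather set, proceeding in two stages: first I would compute the $\nu$-average of $E$ for a $\cC$-Mather measure $\nu$, and then upgrade this averaged identity to a genuinely pointwise one by exploiting that $E$ is a first integral of the Euler--Lagrange flow. Since $\widetilde{\mathcal{M}}_\cC$ is by definition the union of the supports of the measures $\nu\in\mathfrak{M}_\cC(L)$, it suffices to treat a single such $\nu$; and since $A_{L_{\eta_\cC}}$ is affine, passing to the ergodic decomposition shows that almost every ergodic component of $\nu$ is again $\cC$-minimizing, so I may assume $\nu$ is ergodic.

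The heart of the argument is the variational computation pinning down $\int E\,\D\nu$. The crucial input is Mañé's observation that $\nu$ in fact minimizes $A_{L_{\eta_\cC}}$ over the larger class of \emph{closed} (holonomic) probability measures $\mu$ on $TM$, namely those with $\int_{TM}\langle \D f(x), v\rangle\,\D\mu = 0$ for every $f\in C^1(M)$. For $\lambda>0$ I would introduce the fiberwise dilation $S_\lambda(x,v) = (x,\lambda v)$; because the holonomy constraint is linear in $v$, every pushforward $(S_\lambda)_*\nu$ is again closed, so that
\[
g(\lambda) := \int_{TM}\big(L(x,\lambda v) - \lambda\,\hat{\eta}_\cC(x,v)\big)\,\D\nu
\]
is minimized at $\lambda=1$. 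Differentiating under the integral sign (legitimate by the Tonelli growth of $L$ and finiteness of the action) and setting $g'(1)=0$ yields $\int_{TM}\partial_v L(x,v)\cdot v\,\D\nu = \int_{TM}\hat{\eta}_\cC\,\D\nu = \langle\cC,\boldsymbol{\omega}(\nu)\rangle$, the last step being the definition of the rotation vector. Since $E = \partial_v L\cdot v - L$ under the Legendre transform and $A_L(\nu)=\beta(\boldsymbol{\omega}(\nu))$ for a minimizing $\nu$, this gives
\[
\int_{TM} E\,\D\nu = \langle\cC,\boldsymbol{\omega}(\nu)\rangle - \beta(\boldsymbol{\omega}(\nu)) = \alpha(\cC),
\]
where the final equality is the Fenchel duality between $\alpha$ and $\beta$, valid precisely because $\nu$ is $\cC$-minimizing.

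To conclude, I would upgrade the average to a pointwise identity. As the Lagrangian is autonomous, $E$ is a continuous first integral of $\Phi_t^L$, hence an invariant function bounded on $\mathrm{supp}\,\nu$; by ergodicity it is $\nu$-a.e.\ equal to its average $\alpha(\cC)$. The level set $\{E=\alpha(\cC)\}$ is then a closed set of full $\nu$-measure, so it contains $\mathrm{supp}\,\nu$, and taking the union over all $\nu\in\mathfrak{M}_\cC(L)$ gives $\widetilde{\mathcal{M}}_\cC\subset\{H\circ\mathcal{L}=\alpha(\cC)\}$. I expect the main obstacle to be the middle step: justifying that Mather measures minimize over the enlarged class of closed measures (so that the dilations $(S_\lambda)_*\nu$ are admissible competitors) and rigorously differentiating $g$ past the fiberwise scaling. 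By contrast, the reduction to ergodic components and the a.e.-to-everywhere passage are soft consequences of energy conservation and continuity.
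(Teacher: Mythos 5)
The paper offers no proof of this statement to compare against: Theorem~\ref{thm:carneiro} is imported verbatim from Carneiro \cite{carneiro} as a known input to the weak KAM background of Appendix~D. Judged on its own, your argument is correct and essentially complete; it is the now-standard proof via Ma\~n\'e's closed (holonomic) measures and fiberwise dilations (cf.~\cite{mane}, and the book of Contreras--Iturriaga), rather than anything specific to Carneiro's original paper. Each step checks out: the ergodic reduction uses only affineness of $A_{L_{\eta_\cC}}$ and the lower bound $-\alpha(\cC)$; $(S_\lambda)_*\nu$ is closed because the holonomy constraint is linear in $v$; $g'(1)=0$ gives $\int \partial_v L\cdot v\,\D\nu=\langle \cC,\boldsymbol{\omega}(\nu)\rangle$; the Legendre identity $E=\partial_v L\cdot v-L$ together with the equality case of Fenchel duality (namely $A_L(\nu)=\beta(\boldsymbol{\omega}(\nu))$ and $\alpha(\cC)=\langle\cC,\boldsymbol{\omega}(\nu)\rangle-\beta(\boldsymbol{\omega}(\nu))$, both of which hold exactly because $\nu$ is $\cC$-minimizing) yields $\int E\,\D\nu=\alpha(\cC)$; and conservation of energy plus ergodicity upgrades the average to the support, which is closed under the continuous function $E$.

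Two remarks on the obstacles you flagged. First, the minimization of $A_{L_{\eta_\cC}}$ over closed measures is indeed the one genuine external ingredient, but it can be discharged with material the paper already cites: by Fathi--Siconolfi \cite{fathisic} there exists a $C^1$ subsolution $u$ with $H(x,\cC+\D u(x))\le\alpha(\cC)$ for all $x$, and integrating the Fenchel inequality $L(x,v)-\langle\cC+\D u(x),v\rangle\ge -H(x,\cC+\D u(x))\ge-\alpha(\cC)$ against any closed measure $\mu$ kills the $\D u$-term by holonomy and gives $A_{L_{\eta_\cC}}(\mu)\ge-\alpha(\cC)$, which is all your dilation argument needs. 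Second, differentiating $g$ under the integral sign is harmless: Mather's graph theorem (Theorem~\ref{thm:mathergraph}, whose proof is independent of the present statement, so no circularity) makes $\mathrm{supp}\,\nu$ compact, so the integrand is $C^1$ on a compact set; alternatively, for $\lambda\le 1$ one can dominate $L(x,\lambda v)$ by $L(x,v)+\max_x\vert L(x,0)\vert$ using convexity and argue with a one-sided derivative at the interior minimum $\lambda=1$.
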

\subsection{Aubry-Mather theory in one dimension}
In the following, we discuss the basic objects introduced above for the one-dimensional example of a mechanical Hamiltonian on $M = \T$. Note that the unperturbed Hamiltonian~\eqref{eq:H0} in the formulation of our main results is a sum of two such one-dimensional systems. 
Let $V \in C^2(\T)$ be a non-negative Morse function with ${\min_{x \in \T} V(x) = 0}$,
$\mu \in (0,1]$, and consider the Hamiltonian 
\begin{equation} \label{eq:1Dsys}
H: T^*\T \to \R\,, \ (x,p) \mapsto \frac{p^2}{2} - \mu V(x)\,,
\end{equation}
whose corresponding Lagrangian can easily be obtained as $L(x,v) = \frac{v^2}{2} + \mu V(x)$. We note that 
\begin{equation*} 
T\T \cong T^*\T \cong \T \times \R \qquad \text{and} \qquad H_1(\T, \R) \cong H^1(\T,\R) \cong \R\,.
\end{equation*}
First of all, we study invariant probability measures of the system \eqref{eq:1Dsys}. 
\begin{itemize}
\item Since $V$ is a Morse functions, the sets of local (isolated) minima and maxima, $\mathfrak{X}_{\min} $ and $\mathfrak{X}_{\max}$, respectively, contain only finitely many elements. This shows that each of the measures
\begin{equation*}
\big(\delta_{(x_*, 0)}\big)_{x_* \in \mathfrak{X}_{\min}}\,, \qquad \big(\delta_{(x^*, 0)}\big)_{x^* \in \mathfrak{X}_{\max}}
\end{equation*}
are invariant probability measures of the system, all having zero rotation vector. They correspond to unstable and stable fixed points with respective energies $H(x_*, 0) = - \mu V(x_*)$ for $x_* \in \mathfrak{X}_{\min}$ and $H(x^*, 0) = -\mu V(x^*)$ for $x^* \in \mathfrak{X}_{\max}$. 
\item For $E  > 0$, the energy level $\{ H(x,p) = E \}$ consists of two homotopically non-trivial periodic orbits
\begin{equation*}
\mathcal{P}^\pm_E := \left\{ (x,p) \, : \, p = \pm \sqrt{2(E + \mu V(x))}\,, \ x \in \T  \right\}\,.
\end{equation*}
The probability measures evenly distributed along these orbits -- denoted by $\nu_E^\pm$ -- are invariant probability measures of the system. If we denote by
\begin{equation*}
T(E) := \int_{0}^{1} \frac{1}{\sqrt{2(E + \mu V(x))}} \D x
\end{equation*}
the period of such an orbit, one can easily see that $\boldsymbol{\omega}(\nu_E^\pm) = \pm \frac{1}{T(E)}$. Moreover, we have that $T:(0,\infty) \to (0,\infty)$ is continuous, strictly decreasing, and $T(E) \to \infty$ as $E \to 0$, i.e.~$\boldsymbol{\omega}(\nu_E^\pm) \to 0$ as $E \to 0$. 
\item For every $E \in (-\mu \max_{x \in \T} V(x), 0) \setminus ((- \mu V(\mathfrak{X}_{\max})) \cup (- \mu V(\mathfrak{X}_{\min})))$, the energy level $\{H(x,p) = E \}$ consists of finitely many disjoint \emph{contractible} periodic orbits. A probability measure $\nu_E^{(k)}$, $k \in \{1, ... , N_E\}$, evenly distributed along such an orbit, is invariant for the system. Since the orbit is contractible, the rotation vector of $\nu_E^{(k)}$ is zero, $\boldsymbol{\omega}(\nu_E^{(k)}) = 0$. 
\end{itemize}
 The support of the measures $\nu_E^{(k)}$ for $E \in (-\mu \max_{x \in \T}V(x), 0) \setminus ((- \mu V(\mathfrak{X}_{\max})) \cup (- \mu V(\mathfrak{X}_{\min})))$ is not a graph over $\T$. Therefore, by means of Mather's graph Theorem~\ref{thm:mathergraph}, they cannot be action minimizing. Moreover, we also have that the $\alpha$-function is even, $\alpha(c) = \alpha(-c)$ for all $c \in \R$, which follows by the symmetry $H(x,p) = H(x,-p)$ of the system \eqref{eq:1Dsys}. In combination with the convexity of $\alpha$, this shows that $\min_\R \alpha(c) = \alpha (0)$. Since $V \ge 0$, we have $A_L(\nu) \ge 0$ for  all $\nu \in \mathfrak{M}_L$ and thus $\alpha(c) \ge 0 $ for all $c \in \R$. By taking $x_* \in \mathfrak{X}_{\min}$ with $V(x_*) = 0$ (a global minimum), we have $A_L(\delta_{(x_*, 0)}) = 0$, which shows that $\min_\R \alpha(c) = \alpha(0) =  0$. It follows from Theorem \ref{thm:carneiro}, that only energy levels with $E \ge 0$ are capable of containing a Mather set. The Mather set of cohomology $c = 0$ is contained in the energy level with $E = 0$ and we have $\widetilde{\mathcal{M}}_0 = \set{V = 0} \times \set{0}$.
 
For cohomology classes different from zero, a first observation is that, since $\alpha$ is superlinear and continuous, all energy levels with $E > 0$ must contain some Mather set. Let $E > 0$ and consider the periodic orbit $\mathcal{P}_E^+$ with the invariant probability measure $\nu_E^+$ evenly distributed. The graph of this orbit can be viewed as the graph of the closed $1$-form $\eta_E^+ := \sqrt{2(E + \mu V(x))} \D x$, having cohomology class
 \begin{equation*}
c^+(E) = [\eta_E^+] = \int_{0}^{1} \sqrt{2(E + \mu V(x))} \D x\,.
 \end{equation*}
 This function is continuous, strictly increasing for $E > 0$ and we have 
 \begin{equation} \label{eq:defc(V)}
 c^+(E) \longrightarrow \sqrt{\mu}\int_{0}^{1} \sqrt{2 V(x)}\D x =: \sqrt{\mu} \, \mathfrak{c}(V)\,, \quad \text{as} \quad E \to 0\,.
 \end{equation}
 Therefore, this defines an invertible function $c^+:(0,\infty) \to (\sqrt{\mu} \, \mathfrak{c}(V), \infty)$, whose inverse we denote by $E(c)$. Using Mather's graph Theorem \ref{thm:mathergraph} and the Fenchel-Legendre inequality, $\langle v,p \rangle_x \le L(x,v) + H(x,p)$, one can show that $\nu_E^+$ is the unique $c^+(E)$-action minimizing measure and we thus have
 \begin{equation*}
\widetilde{\mathcal{M}}_{c^+(E)} = \mathcal{P}_E^+\,, \quad \text{and similarly} \quad \widetilde{\mathcal{M}}_{c^-(E)} = \mathcal{P}_E^-\,,
 \end{equation*}
 where $c^-(E) = -c^+(E)$ is the cohomology class of $\eta_E^- = - \eta_E^+$. 
 
 It remains to study the non-zero cohomology classes in $[- \sqrt{\mu} \mathfrak{c}(V), \sqrt{\mu} \mathfrak{c}(V)]$. Observe that, by Theorem \ref{thm:carneiro}, we have $\alpha(c^\pm(E)) = E$ and thus, using continuity of $\alpha$, it follows that $\alpha(\pm \sqrt{\mu} \mathfrak{c}(V)) = 0$. By convexity of $\alpha$ and $\min_\R\alpha(c) = \alpha(0) = 0$, this implies $\alpha (c) \equiv 0$ for $c \in [- \sqrt{\mu} \mathfrak{c}(V), \sqrt{\mu} \mathfrak{c}(V)]$. Consequently, the corresponding Mather measures lie in the zero energy level, such that we have 
 \begin{equation*}
\widetilde{\mathcal{M}}_c = \set{V = 0} \times \set{0} \qquad \text{for all} \quad c \in [- \sqrt{\mu} \mathfrak{c}(V), \sqrt{\mu} \mathfrak{c}(V)]\,.
 \end{equation*}
Summarizing the above considerations, we have shown that 
\begin{equation} \label{eq:Mathersetandfunction}
\widetilde{\mathcal{M}}_c = \begin{cases}
\{V = 0\} \times \{ 0 \} \hspace{1.5mm} &\text{if} \ |c| \le  \sqrt{\mu} \mathfrak{c}(V)\\
\mathcal{P}_{E(|c|)}^{\mathrm{sgn}(c)} \hspace{1.5mm} &\text{if} \ |c| > \sqrt{\mu} \mathfrak{c}(V)
\end{cases}\,, \quad 
\alpha(c) = \begin{cases}
0 \hspace{1.5mm} &\text{if} \ |c| \le  \sqrt{\mu} \mathfrak{c}(V)\\
E(|c|) \hspace{1.5mm} &\text{if} \ |c| > \sqrt{\mu} \mathfrak{c}(V)
\end{cases}\,. 
\end{equation}
\begin{rmk}\label{rmk:reg and mon of alpha}
{ We note that $\alpha$ from \eqref{eq:Mathersetandfunction} is globally $C^1$ (which follows from strict convexity of its Fenchel transform $\beta$ defined in \eqref{eq:betafunc}) and $C^\infty$, even analytic, for $|c| > \sqrt{\mu} \mathfrak{c}(V)$ (which follows from the implicit function theorem as $E (\cdot)= (c^+)^{-1}(\cdot)$). Also, $\alpha$ is symmetric around $0$ and (strictly) increasing for $c \ge 0$ (for $c \ge \sqrt{\mu}\mathfrak{c}(V)$). }
\end{rmk}
\begin{rmk} \label{rmk:propertyP1}
By arguing as above for the two independent dimensions of \eqref{eq:H0}, this demonstrates the connection between part (a) of Theorem \ref{thm:liouvillearnold} and part (a) of Proposition~\ref{prop:KAM}. More precisely, the graph property follows from Theorem~\ref{thm:carneiro} and the results in \eqref{eq:Mathersetandfunction}. The remaining part of the statement follows after realizing that $\alpha(\cC) = \alpha_1(c_1) + \alpha_2(c_2)$, where $\alpha_i$ is the $\alpha$-function of the one-dimensional system with coordinates labeled by $i$, and taking $u_\cC \in C^3(\T^2)$ with $|c_i| > \sqrt{\mu_i} \mathfrak{c}(V_i)$  according to 
\begin{equation*}
\nabla_x u_\cC(x) = -\cC \pm \left(\begin{matrix}
\sqrt{2(\alpha_1(c_1) + \mu_1V_1(x^1))} \\ \sqrt{2(\alpha_2(c_2) + \mu_2 V_2(x^2))}
\end{matrix}\right)\,,
\end{equation*}
(recall $V_i \in C^2(\T)$ is a non-negative Morse function and $\alpha_i(c_i) > 0$) such that the Hamilton-Jacobi equation 
\begin{equation*}
\alpha_{}(\cC) = H_0(x, \cC + \nabla_x u_{ \cC}(x))
\end{equation*}
is satisfied.
Moreover, in case that $U$ as in \eqref{eq:Heps} \emph{is} actually separable, one can employ the explicit forms for $c^+(E)$ as the inverse of the $\alpha$-function and $\nabla u_\cC$ to \emph{prove} the validity of Assumption~\eqref{itm:P}, simply by using the same expansions leading to the proof of Lemma \ref{lem:pertLemm}. This means that separable systems satisfy Assumption~\eqref{itm:P}, which shows consistency with our main results.
\end{rmk}
\subsection{Fathi's weak KAM theory and perturbations} \label{subsec:Fathiweak}
For concreteness, we specialize to $M = \T^2$, in which case $H^1(\T^2, \R) \cong T^*_x\T^2 \cong \R^2$ for every $x \in \T^2$, such that we can identify $\cC \in H^1(\T^2, \R)$ with a closed $1$-form of cohomology class $\cC$. The central object of investigation in Fathi's weak KAM theory \cite{fathiorig, fathi} (with important contributions from Siconolfi \cite{fathisic, fathisico05}, Bernard \cite{bernard} and others \cite{crandalllions, LionsPapaVara}) is the Hamilton-Jacobi equation (HJE)
\begin{equation} \label{eq:HJEapp}
H(x, \cC + \nabla_x u) = k\,, \quad k \in \R\,,
\end{equation}
where $H$ is a Tonelli Hamiltonian on $T^*\T^2$ with associated Tonelli Lagrangian $L$. 

For classical solutions, i.e.~$C^1$-functions $u:\T^2 \to \R$ solving \eqref{eq:HJEapp}, it is immediate to check, that there is at most one value $k \in \R$, for which such a $C^1$-solution may exist. In fact, this value agrees with Mather's $\alpha$-function defined in \eqref{eq:alphafunc}. The primary goal of the theory is to define a weaker notion of (sub)solution (so called \emph{weak KAM solutions}), whose existence is always guaranteed \cite{fathi, fathiorig}, even if the Tonelli Hamiltonian $H$ is \emph{not} integrable. See \cite{crandalllions, LionsPapaVara, fathisico05} for approaches to the problem from the theory of partial differential equations.

The following proposition contains perturbative properties of weak KAM solutions $u_\epsi$ and Mather's $\alpha$-function $\alpha_\epsi$ for systems of the form 
\begin{equation*}
H_\epsi(x,p) = H_0(x,p) + \epsi H_1(x,p)\,.
\end{equation*}
\begin{prop}{\rm (Gomes \cite{gomes})} \label{prop:gomes}\\
Let $H_0:T^*\T^2 \to \R$ be an integrable Hamiltonian and $u^{(0)}$ a (classical) $C^1$-solution of the HJE $H_0(x, \cC + \nabla_x u^{(0)}) = \alpha^{(0)}(\cC)$. Moreover, let $\nu^{(0)}$ denote the projection of a Mather measure with cohomology $\cC$. Suppose there exists a function $u^{(1)} \in C^1(\T^2)$ and a number $\alpha^{(1)}(\cC)$ such that 
\begin{equation} \label{eq:firstorderaltern}
\alpha^{(1)}(\cC) = \langle (\nabla_p H_0)(x, \cC + \nabla_x u^{(0)}), \nabla_x u^{(1)} \rangle + H_1(x, \cC + \nabla_x u^{(0)})\,, \quad \forall x \in \T^2\,.
\end{equation}
Then 
\begin{equation} \label{eq:alpha1}
\alpha^{(1)}(\cC) = \int_{\T^2} H_1(x, \cC + \nabla_x u^{(0)}) \, \D \nu^{(0)} \quad \text{and} \quad \alpha_\epsi(\cC) = \alpha^{(0)}(\cC) + \epsi \alpha^{(1)}(\cC) + \mathcal{O}_\cC(\epsi^2)\,.
\end{equation}
\end{prop}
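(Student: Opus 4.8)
The plan is to establish the two assertions in \eqref{eq:alpha1} separately: first the identity for $\alpha^{(1)}(\cC)$, and then the quadratic error bound by sandwiching $\alpha_\epsi(\cC)$ between a variational upper bound and a variational lower bound that agree to first order in $\epsi$.

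\emph{First I would prove the formula $\alpha^{(1)}(\cC)=\int_{\T^2}H_1(x,\cC+\nabla_x u^{(0)})\,\D\nu^{(0)}$.} I integrate the linearized Hamilton--Jacobi relation \eqref{eq:firstorderaltern} against the Mather measure $\tilde\nu^{(0)}$ (whose projection is $\nu^{(0)}$). Since $\tilde\nu^{(0)}$ is a probability measure and $\alpha^{(1)}(\cC)$ is a constant, the left-hand side integrates to $\alpha^{(1)}(\cC)$, so it remains to show that the transport term $\int\langle(\nabla_pH_0)(x,\cC+\nabla_x u^{(0)}),\nabla_x u^{(1)}\rangle\,\D\tilde\nu^{(0)}$ vanishes. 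By Mather's graph Theorem \ref{thm:mathergraph} and the existence of the classical solution $u^{(0)}$, the Mather set is the Lipschitz graph $\{(x,\cC+\nabla_x u^{(0)}(x))\}$, on which the Hamiltonian velocity is exactly $\dot x=(\nabla_p H_0)(x,\cC+\nabla_x u^{(0)})$. Hence the integrand equals $\frac{\D}{\D t}\big(u^{(1)}\circ\pi\circ\Phi^L_t\big)\big|_{t=0}$, the derivative of $u^{(1)}$ along the Euler--Lagrange flow; invariance of $\tilde\nu^{(0)}$ lets me interchange the $t$-derivative with the integral, after which the integral of $u^{(1)}$ is constant in $t$ and its derivative is zero. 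This gives the claimed identity.

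\emph{For the upper bound $\alpha_\epsi(\cC)\le\alpha^{(0)}(\cC)+\epsi\,\alpha^{(1)}(\cC)+\mathcal{O}_\cC(\epsi^2)$,} I would use the subsolution (inf--max) characterization of Mather's $\alpha$-function: for any $C^1$ function $w$ one has $\alpha_\epsi(\cC)\le\max_{x}H_\epsi(x,\cC+\nabla_x w)$, which follows from \eqref{eq:alphafunc} and the Fenchel inequality. Inserting the approximate solution $w=u^{(0)}+\epsi u^{(1)}$ and Taylor expanding $H_\epsi=H_0+\epsi H_1$ in the momentum slot gives
\begin{equation*}
H_\epsi(x,\cC+\nabla_x w)=\underbrace{H_0(x,\cC+\nabla_x u^{(0)})}_{=\,\alpha^{(0)}(\cC)}+\epsi\Big[\langle(\nabla_pH_0)(x,\cC+\nabla_x u^{(0)}),\nabla_x u^{(1)}\rangle+H_1(x,\cC+\nabla_x u^{(0)})\Big]+\mathcal{O}_\cC(\epsi^2),
\end{equation*}
where the remainder is uniform in $x$ by the $C^2$-regularity of $H_0,H_1$ and boundedness of $\nabla_x u^{(0)},\nabla_x u^{(1)}$. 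By \eqref{eq:firstorderaltern} the bracket equals the constant $\alpha^{(1)}(\cC)$ for every $x$, so the right-hand side is $\alpha^{(0)}(\cC)+\epsi\,\alpha^{(1)}(\cC)+\mathcal{O}_\cC(\epsi^2)$ uniformly; maximizing over $x$ yields the upper bound for both signs of $\epsi$.

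\emph{For the matching lower bound} I would pass to Mañé's characterization \cite{mane} of $\alpha_\epsi$ as a supremum of $\int\widehat{\eta}_{\cC}\,\D\mu-A_{L_\epsi}(\mu)$ over \emph{holonomic} (closed) probability measures, an admissible class independent of $\epsi$. Testing with the fixed unperturbed Mather measure $\tilde\nu^{(0)}$ gives $\alpha_\epsi(\cC)\ge\int\widehat{\eta}_{\cC}\,\D\tilde\nu^{(0)}-A_{L_\epsi}(\tilde\nu^{(0)})$. An envelope-theorem expansion of the Fenchel transform yields $L_\epsi(x,v)=L_0(x,v)-\epsi\,H_1(x,p_0(x,v))+\mathcal{O}(\epsi^2)$, where $p_0(x,v)$ is the unperturbed Legendre dual; on the support of $\tilde\nu^{(0)}$ one has $p_0=\cC+\nabla_x u^{(0)}$, so integrating and invoking the first formula gives $A_{L_\epsi}(\tilde\nu^{(0)})=A_{L_0}(\tilde\nu^{(0)})-\epsi\,\alpha^{(1)}(\cC)+\mathcal{O}_\cC(\epsi^2)$. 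Since $\tilde\nu^{(0)}$ maximizes at $\epsi=0$, i.e.\ $\alpha^{(0)}(\cC)=\int\widehat{\eta}_{\cC}\,\D\tilde\nu^{(0)}-A_{L_0}(\tilde\nu^{(0)})$, this produces $\alpha_\epsi(\cC)\ge\alpha^{(0)}(\cC)+\epsi\,\alpha^{(1)}(\cC)-\mathcal{O}_\cC(\epsi^2)$, and combining with the upper bound proves \eqref{eq:alpha1}. \emph{The main obstacle} is this lower bound: one must verify that the second-order remainders in the Fenchel expansion are uniform on the Mather set (which is compact and lies in a single energy level by Theorem \ref{thm:carneiro}), and justify the holonomic-measure reduction so that the admissible class does not move with $\epsi$; the upper bound and the first formula are comparatively routine once the graph and invariance properties are available.
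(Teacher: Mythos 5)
Your proposal is correct, but note that the paper itself contains no proof of this proposition: it is imported verbatim from Gomes \cite{gomes}, so the comparison is with that original argument rather than anything in the present text. Your two-sided variational sandwich is essentially the standard (and Gomes's) route: the first identity by integrating \eqref{eq:firstorderaltern} against the Mather measure and killing the transport term via invariance, the upper bound by feeding the approximate solution $u^{(0)}+\epsi u^{(1)}$ into the inf--max (subsolution) characterization of $\alpha_\epsi$, and the lower bound by testing Ma\~n\'e's holonomic-measure formulation with the fixed unperturbed Mather measure together with the envelope expansion $L_\epsi = L_0 - \epsi H_1(x,p_0(x,v)) + \mathcal{O}(\epsi^2)$. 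All the steps you flag as delicate do go through: the exchange of $t$-derivative and integral is dominated convergence on the compact Mather set, the uniformity of the Taylor remainders follows from $C^2$-regularity of $H_0$, the implicit function theorem for the Legendre maximizer, and compactness of $\mathrm{supp}\,\tilde\nu^{(0)}$ (via Theorem \ref{thm:carneiro}), and the passage to holonomic measures is exactly what makes the admissible class $\epsi$-independent so that $\tilde\nu^{(0)}$ is a legitimate competitor for $\alpha_\epsi$. One small imprecision: the Mather set need not \emph{equal} the graph $\{(x,\cC+\nabla_x u^{(0)}(x))\}$ over all of $\T^2$ (the projected Mather set can be a proper subset; equality does hold in the paper's integrable setting, cf.~\eqref{eq:Mathersetandfunction}); what your argument actually uses, and what is true in general from the Fenchel equality case for minimizing measures, is only the \emph{containment} $\mathrm{supp}\,\tilde\nu^{(0)}\subset\mathcal{L}^{-1}\big(\mathrm{Graph}(\cC+\nabla_x u^{(0)})\big)$, so the proof is unaffected. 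Also, for the Legendre machinery to apply to $H_\epsi$ you implicitly need $H_\epsi$ Tonelli for small $\epsi$, which is automatic in the paper's application where $H_1(x,p)=U(x)$ is $p$-independent.
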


\begin{rmk} \label{rmk:alterP(iii)}
The above proposition provides a converse to \eqref{eq:propertyP1} in Assumption \eqref{itm:P}. In fact, the transport-type equation \eqref{eq:firstorderaltern} for the unknown $u^{(1)}$ (with so far unspecified constant $\alpha^{(1)}(\cC)$) is exactly the first-order expansion obtained in \eqref{eq:firstorder1}, \eqref{eq:firstorder2}, and \eqref{eq:firstorder3} in Section~\ref{sec:proofs} and also fixes $\alpha^{(1)}(\cC)$ to be given by \eqref{eq:alpha1}. Moreover, the equation \eqref{eq:firstorderaltern} coincides with the relation, which the correction term $u^{(1)}$ of an \emph{approximate solution} $\tilde{u}_\epsi = u^{(0)} + \epsi \,  u^{(1)}$ to the HJE
\begin{equation*}
H_\epsi(x, \cC + \nabla_x u_\epsi) = k
\end{equation*}
of order one has to satisfy (see \cite{gomes}). The approximate solution $\tilde{u}_\epsi = u^{(0)} + \epsi \,  u^{(1)}$ also coincides with the first order truncation of the so-called \emph{Lindstedt series} \cite{AKN, gomes2}, a not necessarily convergent perturbative expansion similar to the ones in KAM theory \cite{KAM_K, KAM_A, KAM_M} or the Poincaré-Melnikov method \cite{AKN, Guckenheimer, TreschZube}. Finally, it is interesting to note that, if $H_1(x,p) = W(x)$ is independent of the $x$-variables, then $\alpha_\epsi(\cC)$ is a convex function of $\epsi$ and thus a.e.~twice differentiable -- yielding the expansion \eqref{eq:alpha1} at every such point. 
\end{rmk}

\end{document}